\documentclass[10pt, a4paper]{article}
\usepackage[T1]{fontenc}
\usepackage[a4paper, left=3.5cm, right=3.5cm, top=3.5cm]{geometry}
	
\usepackage[german, english]{babel}
\usepackage[utf8]{inputenc}
\usepackage{amssymb}
\usepackage{amsmath}
\usepackage{amsthm}
\usepackage{yfonts}
\usepackage{dsfont}
\usepackage{faktor}
\usepackage[pdftex]{graphicx}
\usepackage[arrow, matrix, curve]{xy}
\usepackage{nicefrac}
\usepackage{stmaryrd}
\usepackage{hyperref}
\usepackage[arrow, matrix, curve]{xy}
\usepackage{nth}
\usepackage{xcolor}
\usepackage{mathdots}

\newtheorem{num}{Nummerierung}[subsection]
\newtheorem{thm1}{Theorem}
\newtheorem{defin}[num]{Definition}
\newtheorem*{defin*}{Definition}
\newtheorem{prop}[num]{Proposition}

\newtheorem{lem}[num]{Lemma}

\newtheorem{cor}[num]{Corollary}

\newtheorem{rem}[num]{Remark}
\newtheorem*{rem*}{Remark}

\newtheorem{ex}[num]{Example}
\newtheorem{thm}[num]{Theorem}

\newtheorem*{conj*}{Conjecture}
\newtheorem*{thm*}{Theorem}
\newtheorem*{cor*}{Corollary}

\setlength{\parindent}{0em}

\newcommand{\D}{\mathbb{D}}
\newcommand{\E}{\mathbb{E}}

\newcommand{\N}{\mathbb{N}}
\newcommand{\Z}{\mathbb{Z}}

\newcommand{\R}{\mathbb{R}}
\newcommand{\C}{\mathbb{C}}

\newcommand{\T}{\mathbb{T}}
\newcommand{\W}{\mathbb{W}}

\newcommand{\cB}{\mathcal{B}}

\newcommand{\cD}{\mathcal{D}}
\newcommand{\cE}{\mathcal{E}}

\newcommand{\cH}{\mathcal{H}}

\newcommand{\cP}{\mathcal{P}}
\newcommand{\cU}{\mathcal{U}}

\newcommand{\cO}{\mathcal{O}}
\newcommand{\cS}{\mathcal{S}}

\newcommand{\cW}{\mathcal{W}}

\newcommand{\g}{\mathfrak{g}}

\newcommand{\m}{\mathfrak{m}}
\newcommand{\n}{\mathfrak{n}}

\renewcommand{\a}{\mathfrak{a}}
\renewcommand{\t}{\mathfrak{t}}
\renewcommand{\k}{\mathfrak{k}}
\newcommand{\p}{\mathfrak{p}}

\newcommand{\z}{\mathfrak{z}}

\renewcommand{\u}{\mathfrak{u}}

\newcommand{\gl}{\mathfrak{gl}}

\newcommand{\spann}{\text{span}}

\newcommand{\tpitwo}{\tfrac{\pi}{2}}

\newcommand{\pitwo}{\frac{\pi}{2}}

\renewcommand{\Re}{{\rm Re}}
\renewcommand{\Im}{{\rm Im}}
\newcommand{\Ind}{{\rm Ind}}
\newcommand{\id}{{\rm id}}
\newcommand{\im}{{\rm im}}

\newcommand{\ev}{{\rm ev}}

\newcommand{\difftev}{\left.\frac{d}{dt}\right\vert_{t=0}}

\newcommand{\del}{\partial}
\newcommand{\tr}{{\rm tr}}
\newcommand{\ad}{{\rm ad}}
\newcommand{\Ad}{{\rm Ad}}
\newcommand{\tri}{\trianglelefteq}
\newcommand{\scp}{\langle\cdot,\cdot\rangle}
\newcommand{\SL}{{\rm SL}}
\newcommand{\GL}{{\rm GL}}

\newcommand{\Hom}{{\rm Hom}}
\newcommand{\End}{{\rm End}}

\newcommand{\qand}{\quad\text{and}\quad}
\newcommand{\qandthus}{\quad\text{and thus}\quad}
\newcommand{\qfor}{\quad\text{for}\quad}
\newcommand{\qforall}{\quad\text{for all}\quad}
\newcommand{\qsuchthat}{\quad\text{such that}\quad}
\newcommand{\wt}{\widetilde}

\newcommand{\hookto}{\hookrightarrow}
\newcommand{\epi}{\twoheadrightarrow}
\newcommand{\spec}{\text{spec}}

\def \wxi {\wt{\Xi}_{G_\C}}
\def \xig {\Xi_{G_\C}}

\def \KANC {K_\C A_\C N_\C}
\def \NAKC {N_\C A_\C K_\C}
\def \pr {{\rm pr}}
\def \temp {{\rm temp}}
\def \Inn {{\rm Inn}}

\title{Polynomial growth of holomorphic extensions of orbit maps of $K$-finite vectors at the boundary of the crown}
\author{Tobias Simon}
\begin{document}\selectlanguage{english}
\maketitle
\begin{abstract}
The Kr\"otz-Stanton Extension Theorem states that the orbit map of a K-finite vector in a Hilbert representation of a linear Lie group extends to a holomorphic map to a principal fibre bundle over the complex crown domain associated to the Riemannian symmetric space $G/K$. We extend this theorem to arbitrary connected semisimple Lie groups and prove polynomial growth estimates at the boundary. Using this, we show that the boundary values of these holomorphic extensions exist in the space of distribution vectors.
\end{abstract}
\tableofcontents
\section*{Introduction}
Let $G$ be a connected semisimple Lie group with global Cartan decomposition $G=K\exp(\p)$. For a Hilbert representation $(\pi,\cH)$, which is unitary when restricted to $K$, let $\cH^{[K]}\subseteq \cH$ be the subspace of \textit{$K$-finite vectors}, i.e. vectors $v\in \cH$ such that $$\dim_\C\spann_\C\pi(K)v<\infty.$$
If $(\pi,\cH)$ is additionally irreducible, or more generally has finite length, then $K$-finite vectors are analytic. Since any $K$-finite vector $v \in \cH^{[K]}$ is contained in a finite-dimensional $K$-invariant subspace, the orbit map $$\pi^v:K\to \cH,\quad k\mapsto \pi(k)v$$
extends to a holomorphic map $\pi^v:\wt{K}_\C\to \cH$, where $\k:=\textbf{L}(K)$ and $\wt{K}_\C$ is the simply connected Lie group with Lie algebra $\k_\C$. Suppose $G$ is linear, i.e. $G\subseteq G_\C$, where $G_\C$ is the universal complexification of $G$. One also knows how far one can extend in the $\p_\C$-direction. For the infinitesimal generator $\del\pi(x)$ such that $e^{t\del\pi(x)}=\pi(\exp(tx))$, one has
\begin{equation}\label{EqIncl}
\cH^{[K]}\subseteq \bigcap_{x\in \Omega_\p}\cD(e^{i\del\pi(x)}),\qfor \Omega_\p:=\left\{x\in \p\;|\;r_{\rm spec}(\ad(x))<\pitwo\right\},
\end{equation}
where $\cD(e^{i\del\pi(x)})$ denotes the domain of $e^{i\del\pi(x)}$ and $r_{\rm spec}$ is the spectral radius. This is an equivalent formulation of the \textit{Kr\"otz--Stanton Extension Theorem} (cf. \cite[Thm. 3.1]{KS04}). This theorem was originally proven for irreducible Banach representations of linear semisimple Lie groups but can be seen to generalize to smooth Fr\'{e}chet globalizations of moderate growth of Harish--Chandra modules of connected semisimple Lie groups with finite center. The same proof would apply if the \textit{Casselman--Wallach Globalization Theorem} (cf. \cite[Thm. 11.6.7]{Wa92}) were available without the assumption of finite the center.\\

For linear Lie groups, a different proof of the Globalization Theorem is given in \cite{BK14}, relying on lower bounds of matrix coefficients. As the Globalization Theorem is currently not available in general, we give an alternative proof of the Kr\"otz--Stanton Extension Theorem (cf. Theorem \ref{thmKrSt}), which uses the holomorphic extension of $K$-finite smooth eigenfunctions of the Casimir operator, as proven in \cite{KSch09} for linear connected semisimple Lie groups. In Appendix \ref{subsDiffEq}, we argue that this result extends to $(Z(\g),K)$-finite smooth functions on connected semisimple Lie groups. \\

In this paper, we study the boundary behaviour of $\|e^{i\del\pi(x)}v\|$ as $x\in \Omega_\p$ approaches the boundary $\del\Omega_\p$, for Hilbert globalizations $(\pi,\cH)$ of Harish--Chandra modules and $K$-finite vectors $v\in \cH^{[K]}$. We restrict ourselves to Hilbert globalizations instead of smooth Fr\'{e}chet globalizations of moderate growth since the topology on the latter is induced by a family of $G$-continuous Hilbert seminorms (cf. \cite{BK14}) and thus the context of Hilbert globalizations is sufficient for our purposes. The main focus of this paper is on upper estimates for the growth of the norm 
$$\|e^{it\del\pi(x_0)}v\|,\qfor x_0\in \del\Omega_\p\qand v\in \cH^{[K]},\quad\text{as}\quad t\nearrow 1.
$$
For irreducible unitary representations and $K$-fixed vectors, sharp results were obtained in \cite[Thm. 7.2]{KO08}, were it was shown that this growth is polylogarithmic in the sense that $$\|e^{it\del\pi(x_0)}v\|\asymp (1-t)^{-\alpha}\log^\beta(1-t),\quad\text{as}\quad t\nearrow 1,\qfor\alpha,\beta\geq 0.$$
Their method for $K$-fixed vectors seems to generalize to arbitrary $K$-finite vectors in unitary representations, as their results rely on the fact that the matrix coefficient $$\pi^{v,v}:G\to \C,\quad g\mapsto\langle v,\pi(g)v\rangle$$
satisfies an analytic differential equation with regular singularities (cf. \cite{CM82}), which is also the case for $K$-finite vectors. However, if the Hilbert representation $(\pi,\cH)$ is not unitary, the doubling trick (cf. \cite[Thm. 4.2]{KS04}) used in \cite{KO08} is no longer available, which reduces the determination of the growth of $\|e^{i\del\pi(x)}v\|$ to the growth of the holomorphic extension of $\pi^{v,v}$. To bypass this difficulty, we consider the holomorphic extension of the $K$-invariant but not $G$-invariant kernel
$$K^{v,v}:G\times G\to \C,\quad (g,h)\mapsto \langle \pi(g)v,\pi(h)v\rangle.
$$
The complex manifold to which this kernel extends is a holomorphic principal fibre bundle over $\Xi\times \Xi$, where $\Xi$ is the complex crown domain initially introduced in \cite{AG90}. Its close connections to representation theory and harmonic analysis have been explored in several the papers, including \cite{GK02a,GK02b,GKO04,KS04,KS05,KO08,KSch09}. For the universal complexification $G\to G_\R\subseteq G_\C$, the crown $\Xi$ is defined as 
\begin{equation}\label{EqCrown}
\Xi:=\Xi_{G_\C}/K_\C,\qfor K_\C:=\langle \exp(\k_\C)\rangle\subseteq K_\C\qand \Xi_{G_\C}:=G_\R\exp(i\Omega_\p)K_\C\subseteq G_\C.
\end{equation}
We refer to the universal covering $\wt{\Xi}_{G_\C}$ of $\Xi_{G_\C}$ as the \textit{principal crown bundle}. The Kr\"otz--Stanton Extension Theorem implies that, for a Hilbert globalization $(\pi,\cH)$ of a Harish--Chandra module and a $K$-finite vector $v\in \cH^{[K]}$, the orbit map $\pi^v:G\to \cH$ extends to a holomorphic map $\pi^v:\wxi\to \cH$. In particular, the kernel $K^{v,v}$ extends to a holomorphic kernel
$$K^{v,v}:\wxi\times \wxi\to \C\qsuchthat \|e^{i\del\pi(x)}v\|^2=K^{v,v}(\exp(-ix),\exp(ix)).
$$
The holomorphic extension of $K^{v,v}$ is equivalent to the holomorphic extension of $\pi^v$, which allows us to prove the Kr\"otz--Stanton Extension Theorem for arbitrary connected semisimple Lie groups using methods from \cite{KSch09} as mentioned above.\\
 
Before turning to our main results, we first recall basic properties of smooth vectors and distribution vectors, and discuss holomorphic functions on strips of moderate growth. A vector $v\in \cH$ is called \textit{smooth} if its orbit map $\pi^v:G\to \cH$ is smooth. The space of smooth vectors, denoted by $\cH^\infty$, carries a natural Fr\'{e}chet topology induced by the seminorms
\begin{equation}\label{EqDefDistr}
p_D:\cH^\infty\to [0,\infty),\quad v\mapsto \|d\pi(D)v\|,\qfor D\in \cU(\g),
\end{equation}
where $\cU(\g)$ denotes the universal enveloping algebra of $\g$. The elements of the corresponding antidual space $\cH^{-\infty}$ are called \textit{distribution vectors} and $\cH^{-\infty}$ contains $\cH$ as a subspace. In particular, one has $G$-equivariant linear embeddings
$$\cH^\infty\hookto \cH\hookto \cH^{-\infty}.
$$
Let $\pi:\R\to B(\cH)^\times$ be a strongly continuous one-parameter group and suppose $v\in \cH^\omega$ is an analytic vector, for which the orbit map $\pi^v:\R\to \cH$ extends to a holomorphic map
$$\pi^v:\cS_\beta:=\{z\in \C\;|\;|\Im(z)|<\beta\}\to \cH,\qsuchthat \|\pi^v(it)\|\leq C(\beta-|t|)^{-N},
$$
for $|t|<\beta$ and some fixed constants $C,N>0$. Then, the limit $$\lim_{t\nearrow \beta}\pi^{v}(it)\in \cH^{-\infty}$$
exists in the weak-$\ast$-topology (cf. Corollary \ref{corSlowGrowth}). This is a straightforward generalization of the well-known fact that holomorphic functions on a strip with moderate growth at the boundary have distributional boundary values. In \cite[Thm. 2.21]{FNO23}, the authors prove this for unitary representations using Laplace transforms. The following theorem verifies Hypothesis 1 and Hypothesis 2 in \cite{FNO23}.
\begin{thm1}\label{thm1}
Let $G$ be a connected semisimple Lie group and let $(\pi,\cH)$ be
\begin{itemize}
\item[{\rm (i)}] an arbitrary Hilbert globalization of a Harish--Chandra module, if $Z(G)$ is finite,
\item[{\rm (ii)}] a principal series representation or irreducible unitary representation, if $|Z(G)|=\infty$.
\end{itemize}
For $v\in \cH^{[K]}$, there exist $C,N>0$ such that
\begin{equation}\label{EqThm1}
\|e^{i\del\pi(x)}v\|\leq C\left(\pitwo -r_{\rm spec}(\ad(x))\right)^{-N},\qforall x\in \Omega_\p.
\end{equation}
In particular, $\lim_{t\nearrow 1}e^{it\del\pi(x)}v\in \cH^{-\infty}$ exists in the weak-$\ast$-topology, for all $x_0\in \del\Omega_\p$.
\end{thm1}
To prove this theorem, we use \textit{Casselman's Subrepresentation Theorem} (cf. \cite[Prop. 8.23]{CM82} for finite center and \cite[Appendix A.1]{FNO23} for general $G$), which implies the existence of an embedding of $(\g,K)$-modules $\cH^{[K]}\hookto \cH_\sigma^{[K]}$, where $\cH_\sigma^{[K]}$ are the $K$-finite vectors of a principal series representation associated to a finite-dimensional representation $(\sigma,W)$ of a minimal parabolic $P_{\rm min}=MAN$, which is unitary when restricted to $M$. Therefore, by applying the Globalization Theorem, it suffices to prove (\ref{EqThm1}) only for principal series representations. We do this by studying the asymptotic behaviour of the complexified Iwasawa component maps at the boundary of the complexified Iwasawa domain $\KANC$. To explain this technique, we recall the definition of principal series representations. Let $G=KAN$ be the Iwasawa composition corresponding to a maximal abelian subspace $\a\subseteq \p$, fix a positive system $\Sigma^+\subseteq \Sigma:=\Sigma(\g,\a)$ of restricted roots and let $\rho:=\tfrac12\sum_{\gamma\in \Sigma^+}m_\gamma\gamma$ be the half-sum of all positive roots with multiplicities $m_\gamma:=\dim_\R\g_\gamma$. Let
$$\kappa:G\to K,\quad \alpha:G\to A,\quad H:=\log\alpha:G\to \a\qand \eta:G\to N
$$
be the corresponding Iwasawa component maps. We also define $\beta:G\to B:=AN,\; g\mapsto \alpha(g)\eta(g)$ and $\exp(H)^\mu:=e^{\mu(H)}$, for $\mu\in \a_\C^\ast$ and $H\in \a$. For $M=Z_K(\a)$ and a  finite-dimensional representation $(\sigma,W)$ of a minimal parabolic $P_{\rm min}=MAN$, which is unitary when restricted to $M$, the principal series representation $\Ind_{P_{\rm min}}^G(\sigma)$ can be realized in the compact picture on the Hilbert space $$\cH_\sigma:=L^2(K\times_{\sigma}M)=\left\{f:K\to W\;|\;f(km)=\sigma(m)^{-1}f(k),\quad \int_{K/M}\|f(k)\|^2dk<\infty\right\}$$
of square-integrable sections of the associated vector bundle $K\times_{\sigma}W\epi K/M$. For $\rho\in \a^\ast$ as above and $\sigma_\rho(man):=a^\rho\sigma(man)$, the principal series representation $\pi_\sigma$ acts by
\begin{equation}\label{defPrinc1}
(\pi_\sigma(g)f)(k):=\sigma_\rho(\beta(g^{-1}k))^{-1}f(\kappa(g^{-1}k)), \qfor g\in G,\quad k\in K,\quad f\in \cH_\sigma.
\end{equation}
It follows from \cite[Thm. 1.8]{KS04} that there exist \textit{complexified Iwasawa component maps} 
\begin{align*}
&\wxi\times K\to \wt{K}_\C, \quad(x,k)\mapsto \kappa(x^{-1}k),\\
&\wxi\times K\to \a_\C, \;\quad(x,k)\mapsto H(x^{-1}k),\\
&\wxi\times K\to N_\C, \quad(x,k)\mapsto \eta(x^{-1}k),
\end{align*}
extending the corresponding maps $G\times K\to K\times \a\times N$. These maps are holomorphic in the first variable and analytic in the second variable. We control the growth of the holomorphic extensions of the orbit map of $K$-finite vectors in principal series representations by studying the growth of the complexified Iwasawa component maps. A suitable notion of growth is defined as follows. Consider the maximal compact subgroup $U:=\langle \exp_{G_\C}(\u)\rangle$ of $G_\C$, for $\u:=\k+i\p$, and the \textit{maximal scale function} of $G_\C$ (cf. \cite{BK14}) given by $$s_{\rm max}^{G_\C}: G_\C\to [1,\infty),\quad u\exp(iX)\mapsto e^{r_{\rm spec}(\ad(iX))},\qfor u\in U,\quad X\in \u.
$$
Our main result, from which we derive Theorem \ref{thm1}, is the following (cf. Theorem \ref{ThmMainResult}):
\begin{thm1}\label{thm2}
Let $G$ be a connected semisimple Lie group with not necessarily finite center. Then there exists $N>0$ such that the functions
\begin{align*}
{\rm(i)}\;&\Omega_\p\ni x\mapsto \sup_{k\in K} s_{\rm max}^{G_\C}(\kappa(\exp(-ix)k)),\\
{\rm(ii)}\;&\Omega_\p\ni x\mapsto \sup_{k\in K}s_{\rm max}^{G_\C}(\alpha(\exp(-ix)k)),\\
{\rm(iii)}\;&\Omega_\p\ni x\mapsto \sup_{k\in K}s_{\rm max}^{G_\C}(\eta(\exp(-ix)k)),
\end{align*}
are bounded by a constant multiple of $\left(\pitwo-r_{\rm spec}(\ad(x))\right)^{-N}$.
\end{thm1}
We now sketch the application of Theorem \ref{thm2} to principal series representations. Let $(\sigma_0,W)$ be a unitary representation of $M$, let $\lambda\in \a_\C^\ast$ be a linear functional and consider the finite-dimensional representation $$\sigma:=\sigma_0\times \lambda\times 1:P_{\rm min}=MAN,\quad man\mapsto \sigma_0(m)a^\lambda.$$
For a $K$-finite vector $f\in \cH_\sigma^{[K]}$ in the corresponding principal series representation, one has
\begin{align*}
\|e^{i\del\pi_\sigma(x)}f\|^2=&\int_{K/M}\left|\alpha(x^{-1}k)^{-(\lambda+\rho)}\right|^2\left\| f(\kappa(g^{-1}k))\right\|^2 dkM\\
\leq & \sup_{k\in K}\left|\alpha(x^{-1}k)^{-(\lambda+\rho)}\right|^2\cdot\sup_{k\in K}\left\| f(\kappa(g^{-1}k))\right\|^2
\end{align*}
for all $x\in \Omega_\p$. In particular, Theorem \ref{thm2} implies that there exist $C,N>0$ such that
$$\|e^{i\del\pi_\sigma(x)}f\|^2\leq C\left(\pitwo-r_{\rm spec}(\ad(x))\right)^{-N}.
$$
In \cite{KSch09}, the authors showed that every $(Z(\g),K)$-finite function $f:G\to \C$ extends to a holomorphic function $f:\wxi\to \C$. With a similar argument as above, we were able to show the following theorem (cf. Corollary \ref{corModGrowth}):
\begin{thm1}\label{thm3}
Let $G$ be a connected semisimple Lie group with finite center and let $f:G\to \C$ be a $(Z(\g),K)$-finite smooth function of moderate growth, i.e. there exists $d>0$ such that
$$
\sup_{g\in G}\|g\|^{-d}\cdot|L_{D}(f)(g)|<\infty,\qforall D\in \cU(\g)\qand \|g\|:=s_{\rm max}^{G}(g).
$$
Then, there exist $C,N>0$ such that the holomorphic extension $f:\wxi\to \C$ satisfies
$$|f(\exp(ix))|\leq C\cdot \left(\pitwo-r_{\rm spec}(\ad(x))\right)^{-N},\qforall x\in \Omega_\p.
$$
\end{thm1}
\textbf{Acknowledgements:} We are very grateful to Karl-Hermann Neeb for his supervision and the many fruitful discussions. We thank Ricardo Correa da Silva for always lending us his ear and Bart van Steirteghem for his advice on the algebro geometric aspects. We are also very grateful to Gestur \'{O}lafsson for his suggestion that our methods of the growth of the complexified Iwasawa components also apply to Poisson transforms of distributional sections. This research was supported by DFG-grant NE 413/10-2.
\section*{Notation}\label{Notation}
\begin{itemize}
\item $G$ denotes a connected semisimple Lie group with Cartan decomposition $G=K\exp(\p)$.
\item Let $\a\subseteq \p$ be a maximal abelian subspace and let $\Sigma^+\subseteq \Sigma:=\Sigma(\g,\a)$ be a fixed positive system of restricted roots.
\item Let $\g=\k\oplus \a\oplus \n$ be the associated Iwasawa decomposition and let $G=KAN$ be the corresponding global Iwasawa decomposition.
\item Let $\m=\z_\k(\a)$ and $M=Z_K(\a)$. We fix a minimal parabolic subgroup $P_{\rm min}:=MAN$.
\item For the universal complexification $\eta_G:G\to G_\C$, we denote with $$G_\R,K_\R,K_\C,A_\C  \qand N_\C$$
the integral subgroups of $G_\C$ associated to the subalgebras $\g,\k,\k_\C,\a_\C,\n_\C$ of $\g_\C$.
\item We usually denote elements in $\Omega_\p$ with $x$ and elements in $\del\Omega_\p$ with $x_0$. 
\end{itemize}
\newpage
\section{Hilbert representations of semisimple Lie groups}\label{secKrSt}
In this section, we use results from \cite{KSch09}, which are discussed in Appendix \ref{subsDiffEq}, to generalize the Kr\"otz--Stanton Extension Theorem (cf. \cite[Thm 3.1]{KS04}) to connected semisimple Lie groups $G$ with not necessarily finite center. Throughout, $G$ denotes a connected semisimple Lie group with Lie algebra $\g$ and global Cartan decomposition $G=K\exp(\p)$.
\subsection{Harish Chandra modules and Fr\'{e}chet globalizations}\label{subsHC}
This subsection is a brief introduction to the theory of $(\g,K)$-modules and globalizations of $(\g,K)$-modules. References for the algebraic theory are \cite[\S 3]{Wa88} and \cite[\S 4]{BK14} and globalizations are discussed in \cite[\S 11]{Wa92} and \cite{BK14}. 
\begin{defin}\label{gk}
\rm{A $(\g,K)$-module $V$ is a complex vector space carrying two representations $$d\pi:\cU(\g)\to \End(V)\qand \pi:K\to \GL(V)$$
\begin{itemize}
\item[(1)] such that every vector $v\in V$ is continuous and $K$-finite, i.e contained in a finite-dimensional $K$-invariant subspace $W$ on which $(\pi,W)$ is \textbf{unitarizable} and continuous,
\item[(2)] for $D\in \cU(\g)$, $Y \in \k$ and $k\in K$, one has
$$\difftev \pi(\exp(tY))v=d\pi(Y)v\qand \pi(k)d\pi(D)\pi(k)^{-1}=d\pi(\Ad(k)D).$$
\end{itemize}
A $(\g,K)$-module $V$ is called
\begin{itemize}
\item[(i)] \textit{finitely generated}, if $V$ is finitely generated as a $\cU(\g)$-module.
\item[(ii)] \textit{$Z(\g)$-finite}, if $\dim_\C d\pi(Z(\g))v<\infty$, for all $v\in V$.
\item[(iii)] \textit{admissible}, if the isotypical components $V_{[\tau]}\subseteq V$ are finite-dimensional for all irreducible unitary representations $(\tau,W_\tau)$, where $V_{[\tau]}$ is defined as the image of the map
$$W_\tau \otimes \Hom_K(W_\tau,V)\to V,\quad w\otimes T\mapsto T(w).
$$
\item[(iv)] a \textit{Harish--Chandra module}, if $V$ is finitely generated and admissible.
\end{itemize}
}
\end{defin}
\begin{rem}\label{remAdm}
\rm{(a) Equivalently a $(\g,K)$-module $V$ is a $\cU(\g)$-module such that the $\cU(\k)$-module structure integrates to $K$ and every vector $v\in V$ is \textit{$K$-finite}, i.e. its $K$-orbit spans a finite-dimensional subspace. Since every vector in $V$ is $K$-finite, the $K$-module $V$ decomposes as a sum of isotypical components $V_{[\tau]}$, for $[\tau]\in \widehat{K}$, i.e. $V=\sum_{[\tau]\in \widehat{K}}V_{[\tau]}$.\\

(b) Usually $(\g,K)$-modules are considered in the setting of reductive Lie groups of Harish--Chandra class, i.e. finite coverings of linear reductive Lie groups for which $\Ad(G)\subseteq \text{Inn}(\g_\C)$. For connected reductive Lie groups, $\Ad(G)\subseteq \Inn(\g_\C)$ is automatic. We do \textbf{not} limit ourselves to finite coverings since this excludes simply connected hermitian Lie groups such as the universal covering of $\SL(2,\R)$. If $G$ is simple and $Z(G)$ is not finite, then $G$ is a simply connected hermitian Lie group, $$K\cong \R\times [K,K]\qand Z(G)/(Z(G)\cap [K,K])\cong \Z.$$
In particular, not all finite-dimensional representations of $K$ are unitarizable, which explains why we additionally assume this in Definition \ref{gk}(1). From the perspective of unitary representation theory, where one studies unitary representations of $G$ and considers the $(\g,K)$-module $\cH^{[K]}$ of $K$-finite smooth vectors in $\cH$, this is the natural context.\\

(c) A finitely generated, $Z(\g)$-finite $(\g,K)$-module $V$ is a finitely generated $\cU(\n)$-module. This is a consequence of Osborne's Lemma (cf. \cite[Prop. 3.7.1]{Wa88}), which states that there exists a finite-dimensional subspace $F\subseteq \cU(\g)$ such that $$\cU(\g)=\cU(\n)FZ(\g)\cU(\k).$$
If $W\subseteq V$ is a finite-dimensional subspace which generates $V$ as a $\cU(\g)$-module, then the $(Z(\g),K)$-finiteness of $V$ implies that $$d\pi\left(FZ(\g)\cU(\k)\right)W\subseteq V$$
is a finite-dimensional subspace which generates $V$ as a $\cU(\n)$-module. A $(\g,K)$-module, which is finitely-generated as a $\cU(\n)$-module, is admissible (cf. \cite[Thm. 4.2.6]{Wa88}).}\\

(d) The \textit{Casselman Subrepresentation Theorem} implies that every Harish--Chandra module embeds as a $(\g,K)$-module in the $(\g,K)$-module of $K$-finite vectors of a principal series representation as defined in (\ref{defPrinc1}) (cf. \cite[Thm. 3.8.3]{Wa88} or \cite[Prop. 8.23]{CM82} for finite center and \cite[Appendix A.1]{FNO23} for arbitrary connected semisimple Lie groups). This allows one to reduce many problems about Harish--Chandra modules to principal series representations. The reason why we assume that the finite-dimensional $K$-subrepresentations are unitarizable is that the $K$-finite vectors in principal series representations have this property and thus the Subrepresentation Theorem can only hold for such Harish-Chandra modules.
\end{rem}
Having discussed the algebraic objects, we now discuss their globalizations. For a more detailed discussion, we refer to \cite{BK14}.
\begin{defin}
\rm{Let $(\pi,F)$ be a Fr\'{e}chet representation of $G$, i.e. a representation of $G$ on a Fr\'{e}chet space $F$ such that the action $G\times F\to F$ is continuous. A vector $v\in F$ is called \textit{smooth} if its orbit map $\pi^v:G\to F$ is smooth. Let
$$d\pi:\cU(\g)\to \End(F^\infty)
$$
be the corresponding derived representation. Let $(p_n)_{n\in \N}$ be a family of continuous seminorms of $F$ inducing the topology. The space of smooth vectors, denoted by $F^\infty$, carries a natural Fr\'{e}chet topology induced by the seminorms
$$
p_{n,D}:\cH^\infty\to [0,\infty),\quad v\mapsto p_n(\pi(D)v),\qfor D\in \cU(\g)\qand n\in \N.
$$
The elements of the corresponding antidual space $F^{-\infty}$ are called \textit{distribution vectors}.
}
\end{defin}
\begin{defin}
\rm{Let $(\pi,F)$ be a Fr\'{e}chet representation.
\begin{itemize}
\item[(i)] $(\pi,F)$ is called \textit{smooth}, if every vector in $F$ is smooth.
\item[(ii)] The maximal scale function $\|\cdot\|:G\to [1,\infty)$ is defined as $\|k\exp(X)\|=e^{\|X\|}$, for $k\in K$, $X\in \p$ and a $K$-invariant norm $\|\cdot\|$ on $\p$.
\item[(iii)] $(\pi,F)$ is called of \textit{moderate growth} if for every continuous seminorm $p:F\to [0,\infty)$, there exists $d>0$ and a continuous seminorm $q:F\to [0,\infty)$ such that
$$p(\pi(g)v)\leq \|g\|^dq(v),\qforall v\in F,\quad g\in G.
$$
\item[(iv)] A seminorm $p:F\to [0,\infty)$ is called \textit{$G$-continuous} if the $G$-action $G\times (F,p)\to (F,p)$ is continuous.
\end{itemize}
}
\end{defin}
\begin{defin}
\rm{Let $V$ be a $(\g,K)$-module. A Fr\'{e}chet representation $(\pi,F)$ is called a \textit{Fr\'{e}chet globalization} of $V$ if the space of $K$-finite vectors $F^{[K]}$ consists of smooth vectors and is isomorphic to $V$ as a $(\g,K)$-module.
}
\end{defin}
\begin{thm}\label{ThmHilbSemi}
{\rm (\cite[Thm. 5.5, Cor. 5.6]{BK14})} Let $G$ be a connected semisimple Lie group and let $(\pi,F)$ be a smooth Fr\'{e}chet globalization of moderate growth of a Harish--Chandra module $V$. Then, there exists a countable family of $G$-continuous $K$-invariant Hilbert semi-norms inducing the topology on $F$.
\end{thm}
\begin{proof}
The proof of \cite[Thm. 5.5, Cor. 5.6]{BK14}) relies on the estimate $\dim_\C V_{[\tau]}\lesssim(1+d_\tau)^N$, for $[\tau]\in \widehat{K}$ and the dimension $d_\tau\in \N_0$ of $\tau$. This estimate also holds for Harish--Chandra modules of arbitrary semisimple Lie groups since the Casselman Subrepresentation Theorem holds (cf. \cite[\S 6.1]{FNO23}) and thus Frobenius reciprocity is applicable. The remaining arguments generalize straightforwardly.
\end{proof}
\begin{rem}\label{remFrechGlob}
\rm{Let $(\pi,F)$ be a smooth Fr\'{e}chet globalization of moderate growth of a Harish--Chandra module and let $(\scp_n)_{n\in \N}$ be a family of $K$-invariant positive-semidefinite sesquilinear forms such that the corresponding Hilbert seminorms $\|\cdot\|_n$ are $G$-continuous and induce the topology on $F$. Let 
$$F_n:=\{v\in F\;|\; \|v\|_n=0\}\qand \cH_n:=\overline{F/F_n}^{\|\cdot \|_n}
$$
be the corresponding completion. Equip $\cH:=\prod_{n\in \N}\cH_n$ with the product topology and consider the embedding of Fr\'{e}chet representations $$\Phi:F\hookto \cH,\quad v\mapsto (v\;\text{mod}\;F_n)_{n\in\N}. $$
Since the seminorms $\|\cdot \|_n$ induce the topology on $F$ and $F$ is complete, $\Phi$ is an injective map between Fr\'{e}chet spaces with closed range, i.e. is a homeomorphism onto its image by the open mapping theorem.}
\end{rem}
For connected semisimple Lie groups with finite center, one has the Casselman--Wallach Globalization Theorem (cf. \cite[Thm. 11.6.7]{Wa92}).
\begin{thm}
Let $G$ be a connected semisimple Lie group with finite center and let $(\pi_i,F_i)$ be two smooth Fr\'{e}chet globalizations of a Harish--Chandra module $V$ of moderate growth. Then, the corresponding isomorphism $F_1^{[K]}\to F_2^{[K]}$ of $(\g,K)$-modules extends to a continuous $G$-equivariant isomorphism $F_1\to F_2$ of Fr\'{e}chet representations.
\end{thm}
\subsection{The Kr\"otz--Stanton Extension Theorem}\label{subsKS}
We now present an alternative proof of the \textit{Kr\"otz--Stanton Extension Theorem} that applies to arbitrary connected semisimple Lie groups $G$. The original proof heavily relies on the Casselman--Wallach Globalization Theorem and our proof uses the holomorphic extension of matrix coefficients. Throughout this subsection, we assume that $G$ is \textbf{simply connected}. 
\begin{defin}
\rm{The \textit{complex crown domain} is defined as
$$\Xi:=G\times_K i\Omega_\p,\qfor \Omega_\p:=\left\{x\in \p\;|\;r_{\rm spec}(\ad(x))<\pitwo\right\},
$$
and it carries a complex structure induced from the embedding
$$\Xi\cong G_\R\exp(i\Omega_\p).o\subseteq G_\C/K_\C,\qfor o:=eK_\C\in G_\C/K_\C.
$$
The \textit{principal crown bundle} $\wxi$ is defined as the universal covering of the $G$-left invariant holomorphic $K_\C$-principal fibre bundle $$\Xi_{G_\C}=G_\R\exp_{G_\C}(i\Omega_\p)K_\C\subseteq G_\C.$$
}
\end{defin}
\begin{rem}
\rm{We recall some facts about $\wxi$ from \cite[\S 3]{FNO23}. Let $q_{\xig}:\wxi\to \xig
$ be the universal covering. For each $\wt{e}\in q_{\xig}^{-1}(\{e\}),$ there exists a unique lift $$\exp:i\Omega_\p \to \wxi\quad\text{of}\quad \exp:i\Omega_\p\to \xig\qsuchthat\exp(0)=\wt{e}.$$
Furthermore, there exists a $G$-left action and $\wt{K}_\C$-right action on $\wxi$ such that the universal covering becomes equivariant. In particular, $\wxi$ is a holomorphic $\wt{K}_\C$-principal fibre bundle over $\Xi$ and
$$\wxi=G\exp(i\Omega_\p)\wt{K}_\C\cong (G\times i\Omega_\p)\times_K \wt{K}_\C.$$
Throughout this paper, we identify $G$ with the totally real submanifold $G.\wt{e}\subseteq \wxi$.}
\end{rem}
\begin{defin}
\rm{We say that a smooth function $f:G\to \C$ is \textit{$(Z(\g),K)$-right finite}, if the function $f$ is $K$-right finite, i.e. $\spann_\C \{R_k f\;|\;k\in K\}\subseteq C^\infty(G)$ is finite-dimensional, for the right translates $R_kf(g):=f(gk)$, and $f$ is $Z(\g)$-finite.}
\end{defin}
\begin{defin}
\rm{For a Hilbert representation $(\pi,\cH)$ of a connected semisimple Lie group $G$ which is unitary on $K$, the dual representation $(\pi^\ast,\cH)$ is defined as $\pi^\ast(g):=\pi(g^{-1})^\ast$. Note that $(\pi^\ast,\cH)$ is also a Hilbert representation, which is unitary when restricted to $K$.
}
\end{defin}
The following proposition is the main tool in our proof of the Kr\"otz-Stanton Extension Theorem. Its proof heavily relies on the results from \cite{KSch09}, which generalize to arbitrary connected semisimple Lie groups (cf. Appendix \ref{subsDiffEq}).
\begin{prop}\label{PropMatrixCoeff}
Let $(\pi,\cH)$ be a Hilbert representation of a connected semisimple Lie group $G$ which is unitary on $K$ and let $v,w\in \cH^{[K]}$ be $Z(\g)$-finite vectors. Then, the kernels
\begin{align*}
K^{v,w}:G\times G\to \C,&\quad\quad(g_1,g_2)\mapsto \langle \pi(g_1)v,\pi(g_2)w\rangle\\
\qand K_{\rm inv}^{v,w}:G\times G\to \C,&\quad\quad(g_1,g_2)\mapsto \langle \pi^\ast(g_1) v,\pi(g_2)w\rangle
\end{align*}
are $(Z(\g\oplus \g),K\times K)$-right finite and extend to holomorphic kernels $$K^{v,w}:\wt{\Xi}_{G_\C}\times \wt{\Xi}_{G_\C}\to \C\qand K_{\rm inv}^{v,w}:\wt{\Xi}_{G_\C}\times \wt{\Xi}_{G_\C}\to \C.$$
The kernel $K_{\rm inv}^{v,w}$ and its holomorphic extension are $G$-invariant.
\end{prop}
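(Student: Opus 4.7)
The plan is to reduce the proposition to the generalization of \cite[Thm. 3.5]{KSch09} established in Subsection \ref{subsDiffEq}, applied to the connected semisimple Lie group $G\times G$. Since $\rho(x_1,x_2) = \max\{\rho(x_1),\rho(x_2)\}$, one has $\Omega_{\p\oplus\p} = \Omega_\p\times\Omega_\p$, so the principal crown bundle of $G\times G$ is canonically $\wt{\Xi}_{G_\C}\times\wt{\Xi}_{G_\C}$. It therefore suffices to verify that $K_{v,w}$ and $K_{v,w}^\ast$ are smooth and $(Z(\g\oplus\g),K\times K)$-right finite on $G\times G$.

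For smoothness, the $\cU(\g)$-submodule of $\cH^{[K]}$ generated by $v$ and $w$ is finitely generated and $Z(\g)$-finite, hence admissible by Remark \ref{remAdm}(c), and therefore consists of analytic vectors by the standard elliptic-regularity argument for the Casimir element. Consequently $g\mapsto \pi(g)v$ and $g\mapsto \pi(g)w$ are real-analytic $\cH$-valued maps, and both kernels are real-analytic on $G\times G$.

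For the right finiteness, a direct computation exploiting unitarity of $\pi|_K$ yields
$$K_{v,w}(g_1k_1,g_2k_2) = K_{\pi(k_1)v,\pi(k_2)w}(g_1,g_2),\qquad K_{v,w}^\ast(g_1k_1,g_2k_2) = K_{\pi(k_1)v,\pi(k_2)w}^\ast(g_1,g_2),$$
and the $K$-finiteness of $v$ and $w$ bounds the space of right $K\times K$-translates. Under the identification $Z(\g\oplus\g) \cong Z(\g)\otimes Z(\g)$, the right action of $z_1\otimes z_2$ replaces $v$ by $\del\pi(z_1)v$ and $w$ by $\del\pi(z_2)w$ in the respective kernels, so the $Z(\g)$-finiteness of $v$ and $w$ delivers the desired $(Z(\g\oplus\g),K\times K)$-right finiteness. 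For $K_{v,w}^\ast$ the identity
$$K_{v,w}^\ast(g_1,g_2) = \langle v,\pi(g_1^{-1}g_2)w\rangle$$
further reduces the analysis to the right action of $Z(\g)$ on the single matrix coefficient $F(g) := \langle v,\pi(g)w\rangle$ and simultaneously establishes $G$-left invariance on $G\times G$.

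The generalized theorem from Subsection \ref{subsDiffEq} then furnishes the holomorphic extensions to $\wt{\Xi}_{G_\C}\times\wt{\Xi}_{G_\C}$, and the identity theorem transfers the $G$-left invariance to the holomorphic extension of $K_{v,w}^\ast$. The main technical point is verifying that the generalized extension theorem applies verbatim to the product group; this is routine, since the Cartan decomposition, the region $\Omega_\p$, the center $Z(\g)$, and the underlying system of differential equations all behave additively under direct products.
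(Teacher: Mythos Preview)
Your proposal is correct and follows the same route as the paper: verify $(Z(\g\oplus\g),K\times K)$-right finiteness, note that $\wt{\Xi}_{(G\times G)_\C}\cong\wt{\Xi}_{G_\C}\times\wt{\Xi}_{G_\C}$, and apply Theorem~\ref{ThmHolExt} to $G\times G$, with the $G$-left invariance of $K_{v,w}^\ast$ carried over by analytic continuation.

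One point deserves a word of care. Your sentence ``the right action of $z_1\otimes z_2$ replaces $v$ by $\del\pi(z_1)v$'' is literally correct only for $K_{v,w}$; for $K_{v,w}^\ast$ the right $g_1$-derivative acts on $v$ through the dual representation $d\pi^\ast$, and $Z(\g)$-finiteness of $v$ for $d\pi$ is not a priori the same as for $d\pi^\ast$. The paper closes this by citing \cite[Lemma~4.3.2]{Wa88} to conclude that $\cH^{[K]}$ is also finitely generated admissible for $\pi^\ast$, hence annihilated by a finite-codimension ideal $I^\ast\subseteq Z(\g)$. Your alternative reduction to the single matrix coefficient $F(g)=\langle v,\pi(g)w\rangle$ also works, but you should make explicit that the right $g_1$-action on $K_{v,w}^\ast$ becomes the \emph{left} action of $Z(\g)$ on $F$, and then use that $L_z=R_{S(z)}$ for $z\in Z(\g)$ (bi-invariant operators) to see that this is still governed by the right $Z(\g)$-finiteness of $F$, i.e.\ by the $Z(\g)$-finiteness of $w$.
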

\begin{proof}
Since $v\in \cH^{[K]}$ is $(Z(\g),K)$-finite, it generates a finitely generated $Z(\g)$-finite $(\g,K)$-submodule $V$ of $\cH^{[K]}$ which is admissible by Remark \ref{remAdm}(c). Hence, $V$ consists of analytic vectors (cf. \cite[Cor. 5.6]{GKL12}) and the closure of $V$ in $\cH$ is $G$-invariant. We may thus assume that $V=\cH^{[K]}$ is a finitely generated admissible $(\g,K)$-module. It follows from \cite[Lemma 4.3.2]{Wa88} that $\cH^{[K]}$ is a finitely generated admissible $(\g,K)$-module for $\pi^\ast$ as well. In particular, $$I:=\ker(d\pi)\cap Z(\g)\tri Z(\g)\qand I^\ast:=\ker(d\pi^\ast)\cap Z(\g)\tri Z(\g)$$
are ideals of finite codimension. Since $Z(\g\oplus \g)=Z(\g)\otimes Z(\g)$, it follows that $$I\otimes Z(\g)+Z(\g)\otimes I\tri Z(\g\oplus\g)\qand I^\ast\otimes Z(\g)+Z(\g)\otimes I\tri Z(\g\oplus \g)$$
are ideals in $Z(\g\oplus \g)$ of finite codimension which annihilate $K^{v,w}$ and $K_{\rm inv}^{v,w}$ respectively. Therefore, $K^{v,w}$ and $K_{\rm inv}^{v,w}$ are $(Z(\g\oplus \g),K\times K)$-finite smooth functions. For the connected semisimple Lie group $G\times G$, one has $$(G\times G)_\C\cong G_\C\times G_\C\qand \wt{\Xi}_{(G\times G)_\C}\cong\wt{\Xi}_{G_\C}\times \wt{\Xi}_{G_\C}.$$
Since $K^{v,w}$ and $K_{\rm inv}^{v,w}$ are $(Z(\g\oplus \g),K\times K)$-finite, Theorem \ref{ThmHolExt} implies that these functions extend to holomorphic functions $$K^{v,w},K_{\rm inv}^{v,w}:\wt{\Xi}_{G_\C}\times \wt{\Xi}_{G_\C}\to \C.$$
The holomorphic extension of $K_{\rm inv}^{v,w}$ is $G$-invariant by the uniqueness of analytic continuation since $K_{\rm inv}^{v,w}:G\times G\to \C$ is $G$-invariant.
\end{proof}
\begin{cor}\label{corMatrixCoeff}
Let $(\pi,\cH)$ be a Hilbert representation of a connected semisimple Lie group $G$ which is unitary on $K$ and let $v,w\in \cH^{[K]}$ be $Z(\g)$-finite vectors. Then, the matrix coefficient 
$$\pi^{v,w}:A\to \C\quad a\mapsto \langle v,\pi(a)w\rangle$$ 
extends to a holomorphic map $\pi^{v,w}:A\exp(2i\Omega_\a)\to \C$.
\end{cor}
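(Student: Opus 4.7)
The plan is to leverage Proposition \ref{PropMatrixCoeff} applied to the left-invariant kernel $K^*_{v,w}(g_1,g_2) = \langle v, \pi(g_1^{-1}g_2)w\rangle$ by splitting an element of $A\exp(2i\Omega_\a)$ into two halves, each lying in the crown. The starting observation on the real locus is the identity
\[
\pi_{v,w}(\exp X) = K^*_{v,w}(e,\exp X) = K^*_{v,w}(\exp(-X/2),\exp(X/2)), \qquad X\in\a,
\]
which follows from $G$-left invariance together with $\pi(\exp(-X/2))^* = \pi(\exp(X/2))$ on $\a\subseteq \p$ (via unitarity arguments are \emph{not} needed here, since the matrix coefficient collapses to a function of $g_1^{-1}g_2$).

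Next, for $Z = X + iY$ with $X\in\a$ and $Y\in 2\Omega_\a$, the halved element $\pm Z/2$ lies in $\a + i\Omega_\a \subseteq \a + i\Omega_\p$, so $\exp_{G_\C}(\pm Z/2)\in \exp(\a)\exp(i\Omega_\p)\subseteq \Xi_{G_\C}$; this is precisely the point of doubling $\Omega_\a$ to $2\Omega_\a$ in the target tube. Since $\a + 2i\Omega_\a$ is a convex tube, hence simply connected, the holomorphic map
\[
\a + 2i\Omega_\a \ni Z \;\mapsto\; \bigl(\exp_{G_\C}(-Z/2),\,\exp_{G_\C}(Z/2)\bigr) \in \Xi_{G_\C}\times\Xi_{G_\C}
\]
admits a unique holomorphic lift to $\wt{\Xi}_{G_\C}\times\wt{\Xi}_{G_\C}$ once we declare the image of $Z=0$ to be the basepoint. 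Composing this lift with the holomorphic extension of $K^*_{v,w}$ provided by Proposition \ref{PropMatrixCoeff} yields a holomorphic function on $\a + 2i\Omega_\a$, which through the exponential $\a_\C\to G_\C$ descends to a holomorphic function on $A\exp(2i\Omega_\a)$.

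To conclude, one checks agreement with $\pi_{v,w}$ on the real slice $Y=0$: the lifted pair collapses to $(\exp(-X/2),\exp(X/2))$ in $\Xi_{G_\C}$, and $G$-left invariance of $K^*_{v,w}$, which extends to $\wt{\Xi}_{G_\C}\times\wt{\Xi}_{G_\C}$ by uniqueness of analytic continuation, brings the evaluation back to $\langle v,\pi(\exp X)w\rangle$. The only delicate point I anticipate is the bookkeeping for the lifts to the principal crown bundle and checking that left invariance of the lifted kernel remains available; this is a purely topological matter resolved by the simple connectedness of $\a + 2i\Omega_\a$ together with the uniqueness of analytic continuation. No further analytic input beyond Proposition \ref{PropMatrixCoeff} is needed.
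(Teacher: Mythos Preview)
Your proposal is correct and rests on the same ingredients as the paper's proof: Proposition~\ref{PropMatrixCoeff}, the $G$-left invariance of the holomorphically extended kernel $K_{v,w}^\ast$, and the identification of $A\exp(i\Omega_\a)$ with a submanifold of $\wt{\Xi}_{G_\C}$ (cf.\ Remark~\ref{remPrincCrownDomain}). The packaging differs slightly. The paper restricts $K_{v,w}^\ast$ to $A\exp(i\Omega_\a)\times A\exp(i\Omega_\a)$ and then proves, via an auxiliary one-variable analytic continuation argument, that this restriction depends only on the quotient in $A\exp(2i\Omega_\a)$, whence the factorization through $\pi_{v,w}$. Your halving map $Z\mapsto(\exp(-Z/2),\exp(Z/2))$ is a more direct shortcut: it produces the desired holomorphic function on $\a+2i\Omega_\a$ in one stroke, and the agreement with $\pi_{v,w}$ on the real slice is immediate from the definition of $K_{v,w}^\ast$. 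Both routes are equivalent in content; yours saves the intermediate factorization lemma at the cost of the (routine) lifting bookkeeping you already flagged.
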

\begin{proof}
We identify $A\exp(i\Omega_\a)$ with $A\,\exp(i\Omega_\a)\subseteq\wt{\Xi}_{G_\C}$. The kernel $$K_{\rm inv}^{v,w}:A\exp(i\Omega_\a)\times A\exp(i\Omega_\a)\to \C$$
is $A$-invariant and holomorphic. Let $H_i,H_i^\prime \in \Omega_\a$, for $i=1,2$, such that $H_1-H_2=H_1^\prime-H_2^\prime$. Since $\Omega_\a$ is convex and open, there exists $\varepsilon>0$ such that
$$H_i(t):=tH_i+(1-t)H_i^\prime\in \Omega_\a,\qforall -\varepsilon<t<1+\varepsilon.
$$
Consider the holomorphic map $f:\{z\in \C\;|\;-\varepsilon<\Im(z)<1+\varepsilon\}\to\C$,
$$f(z):=K^{v,w}(a_1\exp(H_1(z)),a_2\exp(H_2(z))),\qfor a_1,a_2\in A.
$$
In particular, $H_1-H_1^\prime=H_2-H_2^\prime$ implies $H_i(t)=H_i^\prime+t(H_1-H_2^\prime)$ and thus
\begin{align*}
f(t)=K_{\rm inv}^{v,w}(a_1\exp(H_1(t)),a_2\exp(H_2(t)))=K_{\rm inv}^{v,w}(a_1\exp(H_1^\prime),a_2\exp(H_2^\prime))
\end{align*}
by $A$-invariance. This implies that $f$ is constant on $\R$. Therefore $f$ is constant and thus it follows from $f(0)=f(i)$ that $K_{\rm inv}^{v,w}$ only depends on $$a_1a_2^{-1}\exp(i(H_1-H_2))\in A\exp(2i\Omega_\a)\cong \a+2i\Omega_\a.$$
Thus, the restriction $K_{\rm inv}^{v,w}:A\exp(i\Omega_\a)\times A\exp(i\Omega_\a)\to \C$ factors through
$$m:A\exp(i\Omega_\a)\times A\exp(i\Omega_\a)\to A\exp(2i\Omega_\a),\quad (a_1,a_2)\mapsto a_1^{-1}a_2$$
and since $K_{\rm inv}^{v,w}:A\times A\to \C$ factors through $\pi^{v,w}$, the assertion follows.
\end{proof}
\begin{lem}\label{LemSelfAdj}
Let $\pi:\R\to B(\cH)^\times$ be a strongly continuous Hilbert representation of $\R$ and $v\in \cH^\infty$. Suppose there exists $\varepsilon>0$ such that the map
$$f:\R\times \R\to \C,\quad (s,t)\mapsto \langle \pi(s)v,\pi(t)v\rangle
$$
extends to a holomorphic map 
$$f:\cS_{\pm \varepsilon}\times \cS_{\pm \varepsilon}\to \C,\qfor \cS_{\pm \varepsilon}:=\{z\in \C\;|\;|\Im(z)|<\varepsilon\}.$$
Then, the orbit map $\pi^v:\R\to \cH$ extends to a holomorphic map $\pi^v:\cS_{\pm \varepsilon}\to \cH$.
\end{lem}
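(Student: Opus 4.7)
The strategy is to show that $v$ is in fact an \emph{analytic} vector with radius of analyticity at least $\varepsilon$, and then extend $\pi^v$ globally on $\cS_{\pm\varepsilon}$ by translating the resulting Taylor-series extension along the real axis and gluing.

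Let $A$ denote the infinitesimal generator of the strongly continuous one-parameter group $\pi$. Since $v\in\cH^\infty$, we have $v\in\bigcap_n\cD(A^n)$ and the orbit map $s\mapsto\pi(s)v$ is smooth with $\partial_s^n\pi(s)v=A^n\pi(s)v$. Using that $\langle\cdot,\cdot\rangle$ is continuous and $\R$-bilinear, a direct computation gives
$$\partial_s^m\partial_t^n f(0,0)=\langle A^m v, A^n v\rangle\qforall m,n\in \N_0.$$
Since $F$ is jointly holomorphic on $\cS_{\pm\varepsilon}\times\cS_{\pm\varepsilon}$ and restricts to $f$ on $\R\times\R$, the Cauchy--Riemann equations identify the complex mixed partials $\partial_{z_1}^m\partial_{z_2}^n F(0,0)$ with the real mixed partials above. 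Applying the Cauchy estimate on the closed polydisk $\{|z_1|,|z_2|\leq r\}\subseteq\cS_{\pm\varepsilon}\times\cS_{\pm\varepsilon}$ for any $r<\varepsilon$ then yields
$$|\langle A^m v,A^n v\rangle|\leq M(r)\cdot\frac{m!\,n!}{r^{m+n}},\qquad M(r):=\sup_{|z_1|,|z_2|\leq r}|F(z_1,z_2)|<\infty.$$
Setting $m=n$ gives $\|A^n v\|\leq\sqrt{M(r)}\,n!/r^n$, so the $\cH$-valued power series $\sum_{n\geq 0}z^n A^n v/n!$ converges in norm on the disk $\{|z|<r\}$ for every $r<\varepsilon$. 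Hence it defines a holomorphic $\cH$-valued function on $\{|z|<\varepsilon\}$ which agrees with $\pi^v$ on the real interval $(-\varepsilon,\varepsilon)$ by the usual Taylor expansion of smooth $\cH$-valued functions when $v$ is an analytic vector.

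To extend to the whole strip, I would apply this local construction at every real point. For fixed $t_0\in\R$, the vector $w:=\pi(t_0)v\in\cH^\infty$ has matrix coefficient $f_w(s,t)=f(s+t_0,t+t_0)$, and $(z_1,z_2)\mapsto F(z_1+t_0,z_2+t_0)$ is its jointly holomorphic extension to $\cS_{\pm\varepsilon}\times\cS_{\pm\varepsilon}$; the preceding argument therefore produces a holomorphic extension of $\pi^w$ to $\{|z|<\varepsilon\}$. Interpreting this as an extension $\widetilde{\pi^v}:\{|z-t_0|<\varepsilon\}\to\cH$ of the orbit map $z\mapsto\pi(z)v$, two such disk-extensions coincide with $\pi^v$ on the real segment in their intersection, and by the vector-valued identity theorem (applied to their difference, a holomorphic function on a connected open set vanishing on a real interval) they agree on the full overlap. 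The glued function on $\cS_{\pm\varepsilon}=\bigcup_{t_0\in\R}\{|z-t_0|<\varepsilon\}$ is the desired holomorphic extension of $\pi^v$.

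The only nontrivial step is the identification of the complex Taylor coefficients of the extension $F$ with the Hilbert-space quantities $\langle A^m v,A^n v\rangle$; once this is made precise, converting the joint-holomorphy hypothesis into the $n!/r^n$-growth bound on $\|A^n v\|$ that forces analyticity of $v$ is completely standard. The gluing step is routine given uniqueness of vector-valued analytic continuation, and no unitarity or self-adjointness of the generator is used.
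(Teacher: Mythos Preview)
Your proof is correct and follows essentially the same route as the paper: both identify the Taylor coefficients of the holomorphic extension at $(0,0)$ with $\langle A^m v,A^n v\rangle$ and set $m=n$ to obtain the bound $\|A^n v\|\lesssim n!/r^n$ for $r<\varepsilon$, yielding analyticity of $v$ with radius $\geq\varepsilon$. The only cosmetic differences are that the paper invokes the Cauchy--Hadamard formula on polydiscs where you use Cauchy estimates directly, and you spell out the translation-and-gluing step to pass from the disk to the full strip that the paper leaves implicit.
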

\begin{proof}
Let $A:\cD(A)\to \cH$ be the infinitesimal generator of $\pi$. Since $f:\cS_{\pm \varepsilon}\times \cS_{\pm \varepsilon}\to \C$ is a holomorphic function, it coincides with its Taylor series on the polydisc $B_\varepsilon(0)\times B_\varepsilon(0)$ and thus
$$f(s,t)=\sum_{n,m=0}^\infty \frac{s^nt^m}{n!m!}\langle A^nv,A^mv\rangle,\qforall |s|,|t|<\varepsilon.
$$
The Cauchy-Hadamard Theorem for power series on polydiscs (cf. \cite[Chapter I, \S 3, Thm. 4]{Sh92}) implies
\begin{align*}
\frac{1}{\varepsilon}\geq\limsup_{n+m\to \infty}\left( \frac{1}{n!m!}|\langle A^nv,A^mv\rangle|\right)^{1/(n+m)}\overset{n=m}{\geq}\limsup_{n\to \infty}\left( \frac{1}{n!}\|A^nv\|\right)^{1/n}
\end{align*}
and thus it follows that $r\geq \varepsilon$, where $r$ is the radius of convergence of the power series $e^{tA}v=\sum_{k=0}^\infty \frac{t^k}{k!}A^kv$. Therefore the assertion follows.
\end{proof}
If $(\pi,\cH)$ is a Hilbert representation and $v\in \cH^{[K]}$ is a $K$-finite vector, it is contained in a finite-dimensional $K$-invariant subspace $\cE\subseteq \cH^{[K]}$. In particular, the restriction of $\pi\vert_K$ to $\cE$ extends to a holomorphic representation of $\wt{K}_\C$ since the $\k_\C$-representation on $\cE$ integrates. Therefore, the orbit map 
$$\pi^v:K\to \cH,\quad k\mapsto \pi(k)v$$
extends to a holomorphic map $\pi^v:\wt{K}_\C\to \cE\subseteq \cH^{[K]}$. The following theorem was originally proven for irreducible Banach representations of linear simple Lie groups in \cite{KS04}. Our contribution is removing the linearity condition and providing a new proof. 
\begin{thm}\label{thmKrSt}{\rm~\textbf{(The Kr\"otz--Stanton Extension Theorem)}}\\
Let $G$ be a connected semisimple Lie group and let $(\pi,\cH)$ be a Hilbert representation of $G$. For a $Z(\g)$-finite vector $v\in \cH^{[K]}$, the orbit map $\pi^v:G\to \cH$ extends to a holomorphic map
$$\pi^v:\wt{\Xi}_{G_\C}\to \cH,\quad g\exp(ix)k\mapsto \pi(g)e^{i\del\pi(x)}\pi^v(k).$$
\end{thm}
\begin{proof}
For $h\in \del\Omega_\a$, the map $$\R\times \R\to \C,\quad (s,t)\mapsto  K^{v,v}(\exp(sh),\exp(th))=\langle\pi(\exp(sh))v,\pi(\exp(th))v\rangle$$ 
extends to a holomorphic map $\cS_{\pm 1}\times \cS_{\pm 1}\to \C$ by Proposition \ref{PropMatrixCoeff}. In particular, Lemma \ref{LemSelfAdj} implies that $$v\in \cD(e^{it\del\pi(h)}),\qforall |t|<1.$$
Furthermore, $\Omega_\a:=\a\cap \Omega_\p=[0,1)\del\Omega_\a$ and $\Omega_\p=\Ad(K)\Omega_\a$ imply $v\in \cD(e^{i\del\pi(x)})$, for all $x\in \Omega_\p$. Thus, the assertion follows from \cite[Prop. 3.5]{FNO23}, which is stated for unitary representations but its proof can be seen to hold for Hilbert representations as well. Here the main point is to show that the map
$$G\times i\Omega_\p\times \wt{K}_\C\to \cH,\quad (g,ix,k)\mapsto \pi(g)e^{i\del\pi(x)}\pi^{v}(k)
$$
factors through a holomorphic map $\wxi\to \cH$ extending $\pi^v:G\to \cH$, which works analogously since the proof does not use the unitarity.
\end{proof}
\begin{cor}\label{corKrSt}
Let $(\pi,F)$ be a smooth Fr\'{e}chet globalization of moderate growth of a Harish-Chandra module. If $v\in F^{[K]}$ is a $K$-finite vector, then the orbit map $\pi^v:G\to F$ extends to a holomorphic map $\pi^v:\wxi\to F$.
\end{cor}
\begin{proof}
Let $(\scp_n)_{n\in \N}$ be a family of $K$-invariant positive-semidefinite sesquilinear forms such that the corresponding Hilbert seminorms $\|\cdot\|_n$ are $G$-continuous and induce the topology on $F$ (cf. Theorem \ref{ThmHilbSemi}). Let 
$$F_n:=\{v\in F\;|\; \|v\|_n=0\},\quad \cH_n:=\overline{F/F_n}^{\|\cdot \|_n}\qand \cH:=\prod_{n\in \N}\cH.
$$
Equip $\cH:=\prod_{n\in \N}\cH_n$ with the product topology and recall the embedding of Fr\'{e}chet representations $\Phi:F\hookto \cH$ from Remark \ref{remFrechGlob}. Let $v\in F^{[K]}$ be a $K$-finite vector. Then, $v_n:=v\;\text{mod}\;F_n\in \cH_n$ is $K$-finite and the restriction $$\pr_n\circ\Phi\vert_{F^{[K]}}:F^{[K]}\to \cH_n^{[K]}$$ is a $(\g,K)$-module homomorphism. In particular, Theorem \ref{thmKrSt} implies that the orbit map of $v_n\in \cH_n$ extends to a holomorphic map 
$$
\pi_n^{v_n}:\wxi\to \overline{\pi_n(G)v_n}\subseteq \cH_n\quad\text{and thus}\quad\pi^v:=\Phi^{-1}\circ \prod_{n\in \N}\pi_n^{v_n}:\wxi \to F
$$
is a holomorphic extension of the orbit map $\pi^v:G\to F,\;g\mapsto\pi(g)v$.
\end{proof}
\newpage
\section{Growth of the complexified Iwasawa component maps}\label{secIwasawa}
The Kr\"otz--Stanton Extension Theorem provides a holomorphic extension of the orbit maps of $K$-finite vectors for Hilbert globalizations of Harish--Chandra modules to the principal crown bundle. In this section, we develop tools with which we can estimate the growth as one approaches the boundary of the principal crown bundle. Here the main point is that $\xig=G_\R \exp(i\Omega_\p)K_\C$ is contained in $\NAKC\subseteq G_\C$ and to show that one can control the growth of the complexified Iwasawa component maps in terms of scale functions. In Subsection \ref{subsAlgGeo}, we discuss the structure of $\NAKC$ as a smooth affine variety, which allows us to relate the growth of the $K_\C$- and $N_\C$-component to the growth of the $A_\C$-component. This is the main result of Subsection \ref{subsecAC}. In Subsection \ref{subsecasympA}, we then prove our main result on the asymptotic behaviour of the Iwasawa component maps as one approaches the boundary of the principal crown domain.
\subsection{The complexified Iwasawa domain}\label{subsAlgGeo}
Let $G$ be a simply connected semisimple Lie group with Iwasawa decomposition $G=KAN$ and universal complexification $\eta_G:G\to G_\C$. For the integral subgroups $K_\C,A_\C,N_\C\subseteq G_\C$ with respective Lie algebras $\k_\C,\a_\C,\n_\C\subseteq \g_\C$, one considers the \textit{complexified Iwasawa domain}
$$\KANC\subseteq G_\C.
$$
\begin{ex}\label{exsl2}
\rm{Let $G=\SL(2,\R)$ and $G_\C=\SL(2,\C)$. Then the Iwasawa decomposition of $G$ is given by
\begin{equation}\label{EqIwasawaSL2}
g=\begin{pmatrix}
a&b\\
c&d
\end{pmatrix}=\frac{1}{\sqrt{a^2+c^2}}\begin{pmatrix}
a&-c\\
c&a
\end{pmatrix}\begin{pmatrix}
\sqrt{a^2+c^2}&0\\
0& \frac{1}{\sqrt{a^2+c^2}}
\end{pmatrix}\begin{pmatrix}
1&\frac{ab+cd}{a^2+c^2}\\
0& 1
\end{pmatrix}.
\end{equation}
The Iwasawa component maps have a multivalued holomorphic extension to the subdomain $\{a^2+c^2\neq 0\}$ of $\SL(2,\C)$ and the asymptotic at the boundary can be described in terms the function $\Delta:G_\C \to \C$, $g\mapsto a^2+c^2$. Another aspect of this example, which turns out to be a general phenomenon, is that the entries of the $N_\C$-component are polynomials in the entries of elements of $G_\C$ divided by $\Delta$. 
}
\end{ex}
\begin{defin}
\rm{We recall some concepts from Algebraic Geometric. (cf. \cite{Hu75}).
\begin{itemize}
\item[(i)] A subset $X\subseteq \C^n$ is called \textit{Zariski closed} or an \textit{affine varieties} if there exist polynomials $f_1,\dots, f_k\in \C[z_1,\dots,z_n]$ such that
$$X=A(f_1,\dots,f_k):=\{z\in \C^n \;|\;f_i(z)=0,\qforall 1\leq i\leq k\}.
$$ 
The subsets $\C^n \setminus A(f)$ are called \textit{principal open subsets}, for $f\in \C[z]$.
\item[(ii)] Let $A_1\subseteq \C^n$ and $A_2\subseteq \C^m$ be affine varieties. A \textit{morphism} $f:A_1\to A_2$ is a restriction of a polynomial map $\C^n\to \C^m$ mapping $A_1$ to $A_2$. 
\item[(iii)] The coordinate ring $\C[X]$ of $X=A(f_1,\dots,f_k)$ is defined as $$\C[z_1,\dots,z_n]/(f_1,\dots,f_k),$$
where $(f_1,\dots,f_k)\tri \C[z_1,\dots,z_n]$ is the ideal generated by $f_1,\dots,f_k$. There is a one-to-one correspondence between the set of morphisms $f:X\to \C$ and $\C[X]$.
\end{itemize}
}
\end{defin}
\begin{rem}
\rm{Let $X=\C^n\setminus A(f)$ be a principal open subset. Then, the map
$$X\to A(1-z_0f)=\{(z_0,z)\in \C\times \C^{n}\;|\;1-z_0f(z)=0\},\quad z\mapsto \left(\tfrac{1}{f(z)},z\right)
$$
identifies $X$ with an affine variety in $\C^{n+1}$. The coordinate ring of $X$ is given by
$$\C[X]=\left\{\frac{p}{f^k}\;|\;p\in \C[z_1,\dots,z_n],\quad k\in \N_0\right\}.
$$
}
\end{rem}
We return to the general setting and describe the principal open subset $\KANC\subseteq G_\C$. Let $r:=\dim_\R\a$ be the real rank of $\g$ and let $(\pi_i,V_i)$ be the fundamental finite-dimensional irreducible spherical representations of $G$, for $i=1,\dots,r$, whose existence is guaranteed by the Cartan-Helgason Theorem (cf. \cite[Thm. V.4.1]{He84}). Fix a scalar product $\scp_i$ of $V_i$ such that
$$\langle v,\pi(g)w\rangle_i=\langle \pi(\theta(g))^{-1}v,w\rangle_i,\qfor v,w\in V\qand g\in G.
$$
Furthermore, consider highest weight vectors $v_i\in V_i$ with highest weight $\omega_i\in \a^\ast$ and $K$-fixed vectors $w_i\in V_i^K$ such that $\langle v_i,w_i\rangle_i=1$. Since $G_\C$ is simply connected, there exists an involutive holomorphic automorphism $\theta_\C:G_\C\to G_\C$ extending the Cartan involution and $G_\C^{\theta_\C}$ is connected. Therefore, $G_\C^{\theta_\C}=K_\C$ and \cite[Thm. 2.5]{Ol87} applied to the symmetric pair $(G_\C,K_\C)$ implies that
\begin{equation}\label{KANC}
\KANC=\left\{g\in G_\C\;|\;\Delta(g)\neq 0\right\},\qfor \Delta(g):=\prod_{i=1}^r \langle \pi_i(g)v_i,w_i\rangle_i.
\end{equation}
Since $\Delta:G_\C\to \C$ is a product of matrix coefficients, it is contained in the coordinate ring $\C[G_\C]$ of the linear algebraic group $G_\C$ and $\KANC=\Delta^{-1}(\C^\times)$ is an open Zariski dense affine subvariety of $G_\C$. Furthermore, the coordinate ring of the principal open subset $\KANC\subseteq G_\C$ is given by
$$\C[\KANC]=\left\{\frac{p}{\Delta^k}\;|\;p\in \C[G_\C],\quad k\in \N_0 \right\}.
$$
Let $\mu:=\sum_{i=1}^r \omega_i\in \a^\ast$. Then, for $k\in K_\C,a\in A_\C$ and $n\in N_\C$, one has
\begin{equation}\label{Delta}
\Delta(kan)=\prod_{i=1}^r \langle \pi_i(kan)v_i,w_i\rangle_i=a^\mu\in \C^\times,\qfor \exp(H)^\mu:=e^{\mu(H)}.
\end{equation}
In the proof of \cite[Thm. 1.8]{KS04} it was shown that, for $F:=K_\C \cap A_\C$,
\begin{equation}\label{EqKANC}
(K_\C\times_F A_\C)\times N_\C\to \KANC,\quad ([(k,a)],n)\mapsto kan
\end{equation}
is a biholomorphism. Hence, there exists an $N_\C$-projection map $\eta:\KANC\to N_\C$, which extends the $N$-projection map $\eta:G=KAN\to N$. 
\begin{lem}
The $N_\C$-projection $\eta:\KANC\to N_\C$ is a morphism of affine varieties.
\end{lem}
\begin{proof}
The symmetric space $K_\C \backslash G_\C$ is a smooth affine variety which can be realized as a Zariski closed subset of $G_\C$ by the Cartan embedding
$$K_\C\backslash G_\C\to G_\C^{-\theta_\C}:=\{g\in G_\C\;|\;\theta(g)^{-1}=g\},\quad K_\C g \mapsto \theta(g)^{-1}g.
$$
The corresponding algebraic $G_\C$-right action on $G_\C^{-\theta_\C}$ is given by
$$G_\C^{-\theta_\C}\times G_\C\to G_\C^{-\theta_\C},\quad (g_0,g)\mapsto \theta(g)^{-1}g_0g
$$
and the $A_\C N_\C$-orbit of the identity $e$ given by
$$\cS:=\{\theta(n)^{-1}a^2n\;|\;a\in A_\C,\; n\in N_\C\}=\{\theta(g)^{-1}g\;|\;g\in \KANC\}.$$
The stabilizer of $e$ in $A_\C N_\C$ is given by $F=K_\C\cap A_\C=\{a\in A_\C \;|\;a^2=e\}$ (cf. proof of \cite[Prop. 1.3]{KS04}) and
$$\Phi:A_\C\times N_\C \to \cS,\quad (a,n)\mapsto \theta(n)^{-1}an
$$
is a diffeomorphism. Additionally, $\Phi:A_\C\times N_\C\to G_\C^{-\theta_\C}$ is a morphism of affine varieties. In particular, $\cS$ is a Zariski open subset of $G_\C^{-\theta_\C}$ and as such a smooth affine variety. Since $A_\C\times N_\C$ is an irreducible affine variety and $\cS$ is normal, it follows from an application of Zariski's Main Theorem (cf. \cite[Prop. 3.2]{BFMQ22}) that $\Phi$ is an isomorphism. Therefore,
$$\eta(g)=\left(\pr_{N_\C}\circ \Phi^{-1}\right)(\theta(g)^{-1}g),\qfor g\in \KANC,
$$
implies that $\eta$ is a morphism.
\end{proof}
\begin{cor}\label{corUnip}
Let $(\sigma,W)$ be a nilpotent representation of $N_\C$, i.e. there exists $k\in \N_0$ such that $d\sigma(\n_\C)^k=\{0\}$. For $v,w\in W$ and $\sigma^{v,w}:N_\C\to \C,\;n\mapsto \langle v,\sigma(n)w\rangle$, one has $$\sigma^{v,w}\circ \eta\in \C[\KANC].$$
\end{cor}
\begin{proof}
As $\sigma$ is nilpotent, $\sigma^{v,w}$ is a morphism, which proves the assertion.
\end{proof}
\begin{rem}
\rm{Note that the nilpotency of the representation $(\sigma,W)$ is necessary since, for $G=\SL(2,\R)$, the characters
$$\chi_v:N_\C\to \C^\times,\quad \begin{pmatrix}
1&w\\
0&1
\end{pmatrix}\mapsto e^{vw},\qfor v\in \C^\times,
$$
are not morphisms of affine varieties. For our purposes, nilpotent representations are sufficient, since restrictions of finite-dimensional representations of $G_\C$ to $N_\C$ are nilpotent.
}
\end{rem}
\subsection{Scaling properties of the Iwasawa component maps}\label{subsecAC}
In this subsection, we recall some basic properties of scale functions and discuss the growth of the Iwasawa component maps in terms of these scales as one approaches the boundary of the domain $\KANC$. 
\begin{defin}
\rm{For a topological group $G$, a function $s:G\to [1,\infty)$ is called a \textit{scale function} if
\begin{itemize}
\item[(i)] $s$ and $s^{-1}$ are locally bounded,
\item[(ii)] $s$ is submultiplicative, i.e. $s(gh)\leq s(g)s(h)$, for all $g,h\in G$.
\end{itemize}
One defines a partial order on the set of scale functions as follows. For two scale functions $s,s^\prime:G\to \R^+$, consider
$$s\preccurlyeq s^\prime \quad :\Leftrightarrow\quad (\exists C>0,N\in \N)(\forall g\in G),\quad s(g)\leq C\cdot s^\prime(g)^N.
$$
An equivalence class $[s]$ of scale functions is called a \textit{scale structure} on $G$.
}
\end{defin} 
\begin{defin}\label{defScale}
\rm{(cf. \cite{BK14}) Let $G$ be a connected reductive Lie group with Cartan decomposition $G=K\exp(\p)$ and let $\|\cdot \|$ be a $K$-invariant norm on $\p$. Then, one defines the \textit{maximal scale function}
$$s_{\rm max}^G:G\to [1,\infty),\quad k\exp(X)\mapsto e^{\|X\|},\qfor k\in K,X\in \p.
$$
}
\end{defin}
\begin{rem}
\rm{Let $G$ be a connected reductive Lie group and $s:G\to [1,\infty)$ be a scale function. Then, \cite[Lemme 2]{Ga60} implies that $s\preccurlyeq s_{\rm max}^G$, which justifies the name maximal scale structure. Note that the maximal scale function depends on the choice of norm but its scale structure does not. For $\u:=\k\oplus i\p$ and $U:=\langle \exp(\u)\rangle\subseteq G_\C$, consider the $U$-invariant norm
\begin{equation}\label{defRho}
\|\cdot\|:i\u\to [0,\infty),\quad x\mapsto \rho(x):=r_{\rm spec}(\ad(x)).
\end{equation} 
Then, the restriction of $\|\cdot\|$ to $\p$ is $K$-invariant, and for our purposes, this choice of norm is the most convenient.
}
\end{rem}
\begin{lem}
Let $G$ be a connected semisimple Lie group with universal complexification $\eta_G:G\to G_\C$. Then, $s_{\rm max}^{G_\C}\vert_{K_\C}=s_{\rm max}^{K_\C}$.
\end{lem}
\begin{proof}
A Cartan decomposition of $\g_\C$ is given by $\g_\C=\u\oplus i\u$ and thus $$G_\C=U\exp(i\u),\qfor U:=\langle \exp(u)\rangle,$$
is a global Cartan decomposition of $G_\C$. A global Cartan decomposition of the reductive Lie group $K_\C$ is given by $K_\C=K_\R\exp(i\k)$. In particular, for $k\in K_\R$ and $iX\in i\k\subseteq i\u$, one has
\begin{align*}
s_{\rm max}^{G_\C}(k\exp(iX))=e^{\|iX\|}=s_{\rm max}^{K_\C}(k\exp(iX)).\tag*{\qedhere}
\end{align*}
\end{proof}
The following lemma can be found in \cite[Lemma 2.1]{BK14} for linear reductive groups.
\begin{lem}\label{lemScaleFinRep}
Let $G$ be a connected semisimple Lie group and let $\pi:G\to \GL(V)$ be a finite-dimensional complex representation with discrete kernel. Then,
$$s_\pi:G\to [1,\infty),\quad g\mapsto \|\pi(g)\|+\|\pi(g)^{-1}\|
$$
is a scale function on $G$ and $[s_\pi]=[s_{\rm max}^G]$.
\end{lem}
\begin{proof}
That $s_\pi$ is a scale function is trivial. The representation $\pi$ extends to a holomorphic representation of $G_\C$. Fix a $U$-invariant scalar product on $V$. Then, $s_\pi$ extends to $G_\C$ and one has
$$s_\pi(u\exp(X)):=\|\pi(u\exp(X))\|+\|\pi(u\exp(X))^{-1}\|\geq e^{r_{\rm spec}(d\pi(X))},
$$
for $u\in U$ and $X\in i\u$, where $r_{\rm spec}$ denotes the spectral radius. Since $r_{\rm spec}\circ d\pi$ defines a $U$-invariant norm on $i\u$, the assertion follows immediately.
\end{proof}
\begin{rem}
\rm{Recall from (\ref{EqKANC}) that $\KANC\cong (K_\C\times_F A_\C)\times N_\C$ as complex manifolds, for $F=K_\C\cap A_\C$. In particular, there exist holomorphic projections
$$\KANC\to K_\C/F,\quad \KANC \to A_\C /F\qand \KANC \to N_C
$$
and thus one can write every element $g\in \KANC$ as $g=\kappa(g)\alpha(g)\eta(g)$, where $$\kappa(g)F\in K_\C/F,\quad \alpha(g)F\in A_\C/F\qand \eta(g)\in N_\C$$
are uniquely determined. Since $F=K_\C\cap A_\C\subseteq \exp(i\a)\subseteq U$ and $s_{\rm max}^{G_\C}$ is $U$-bi-invariant, it follows that
$$s_{\rm max}^{G_\C}(\kappa(g)),\quad s_{\rm max}^{G_\C}(\alpha(g))\qand s_{\rm max}^{G_\C}(\eta(g)),
\qfor g\in \KANC,$$
are independent of the representants $\kappa(g)$ and $\alpha(g)$. We can therefore speak of scaling properties of the Iwasawa component maps without having to worry about the disambiguities of the decomposition $g=\kappa(g)\alpha(g)\eta(g)$.
}
\end{rem} 
\begin{prop}\label{propScales}
Let $G$ be a connected semisimple Lie group.
\begin{itemize}
\item[{\rm (i)}] For $H_1,H_2\in \a$, one has $s_{\rm max}^{G_\C}(\exp(H_1+iH_2))=e^{r_{\rm spec}(\ad(H_1))}$. In particular, if $\nu \in \a^\ast$, then there exists $N>0$ such that
$$|a^\nu|\leq s_{\rm max}^{G_\C}(a)^N,\qforall a\in A_\C.
$$
\item[{\rm~(ii)}] There exist $C>0$ and $M,N\geq 0$ such that{\rm~
$$s_{\rm max}^{G_\C}(\eta(g))\leq C\cdot s_{\rm max}^{G_\C}(g)^M \cdot s_{\rm max}^{G_\C}(\alpha(g))^N, \quad for\; all\quad  g\in K_\C A_\C N_\C.
$$}
\item[{\rm~(iii)}] There exist $C>0$ and $M,N\geq 0$ such that\rm{
$$s_{\rm max}^{K_\C}(\kappa(g))\leq C\cdot s_{\rm max}^{G_\C}(g)^M \cdot s_{\rm max}^{G_\C}(\alpha(g))^N, \quad for\; all\quad  g\in K_\C A_\C N_\C.
$$}
\end{itemize}
\end{prop}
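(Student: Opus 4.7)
The plan is to reduce both estimates to the case $\GL(n,\C)$ treated in Proposition \ref{coretaSLn}, by embedding $G_\C$ via a suitably adapted faithful representation. I fix a complex-linear representation $\pi : G_\C \to \GL(n,\C)$ with $d\pi$ faithful, and choose a basis of the representation space so that $\pi(K_\C) \subseteq \SO(n,\C)$, $\pi(A_\C)$ lies in the complex diagonal subgroup, and $\pi(N_\C)$ lies in the upper triangular unipotent subgroup. This is arranged by fixing a $\u$-invariant Hermitian inner product (which forces $d\pi(\u) \subseteq \mathfrak{u}(n)$ and hence $d\pi(\k) \subseteq \mathfrak{so}(n,\R)$) and ordering a basis of restricted $\a$-weight vectors so that $\a$ acts diagonally and $\n$ acts by strictly upper triangular matrices. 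The uniqueness of the Iwasawa decomposition in $\GL(n,\C)$ (up to the $L$-ambiguity of Remark \ref{remscale}(e)) then forces $\pi$ to intertwine the three Iwasawa component maps, so that $\pi(\kappa(g))$, $\pi(\alpha(g))$ and $\pi(\eta(g))$ are the $\GL(n,\C)$-Iwasawa components of $\pi(g)$. By Remark \ref{remscale}(c), it suffices to estimate everything in terms of $s_\pi(h) := \|\pi(h)\| + \|\pi(h)^{-1}\|$.

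For part (i), Proposition \ref{coretaSLn} applied to $\pi(g)$ gives
\begin{equation*}
\|\pi(\eta(g))\| \le C\,\|\pi(g)\|^r\, |\Delta(\pi(g)^T \pi(g))|^{-N}.
\end{equation*}
Since $\pi(\kappa(g))^T \pi(\kappa(g)) = \mathds{1}$ and the principal minors $\Delta_k$ are $N_\C$-invariant (as in Example \ref{exsln}), one computes $\Delta_k(\pi(g)^T \pi(g)) = \Delta_k(\pi(\alpha(g))^2)$, which is a monomial in the diagonal entries of $\pi(\alpha(g))$. Hence $|\Delta(\pi(g)^T \pi(g))|^{-1}$ is bounded by a power of $\|\pi(\alpha(g))^{-1}\| \le s_\pi(\alpha(g))$, and so by a power of $s_{\rm max}^{G_\C}(\alpha(g))$. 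On the other hand, $\pi(\eta(g))$ is unipotent, so its inverse is a polynomial of bounded degree in $\pi(\eta(g))$; consequently $\|\pi(\eta(g))^{-1}\|$ is polynomially dominated by $\|\pi(\eta(g))\|$, and combined with $s_{\rm max}^{G_\C} \preccurlyeq s_\pi$ this gives $s_{\rm max}^{G_\C}(\eta(g)) \preccurlyeq \|\pi(\eta(g))\|$, from which (i) follows.

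For part (ii), the identity $\pi(\kappa(g)) = \pi(g)\,\pi(\eta(g))^{-1}\,\pi(\alpha(g))^{-1}$ yields
\begin{equation*}
\|\pi(\kappa(g))\| \le \|\pi(g)\|\cdot \|\pi(\eta(g))^{-1}\|\cdot \|\pi(\alpha(g))^{-1}\|,
\end{equation*}
and the three factors are controlled by $s_{\rm max}^{G_\C}(g)$, by the polynomial bound on $\|\pi(\eta(g))^{-1}\|$ coming from (i), and by $s_{\rm max}^{G_\C}(\alpha(g))$, respectively. Since $\pi(\kappa(g)) \in \SO(n,\C)$ satisfies $\pi(\kappa(g))^{-1} = \pi(\kappa(g))^T$, we have $\|\pi(\kappa(g))^{-1}\| = \|\pi(\kappa(g))\|$, so $s_\pi(\kappa(g)) = 2\|\pi(\kappa(g))\|$, and the equivalence $s_{\rm max}^{K_\C} \sim s_\pi\vert_{K_\C}$ on $K_\C$ completes the proof. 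The main obstacle is the explicit construction of the embedding $\pi$ with Iwasawa decompositions compatible with those of $\GL(n,\C)$; once this is in place, the remainder is algebraic bookkeeping with principal minors, nilpotency of $\n$, and the maximal-scale equivalence of Remark \ref{remscale}(c).
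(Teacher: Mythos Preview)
Your proposal is correct and follows essentially the same route as the paper's proof: embed $G_\C$ into $\GL(n,\C)$ via a representation that aligns the Iwasawa factors, apply Proposition~\ref{coretaSLn} to bound $\|\pi(\eta(g))\|$, control $|\Delta(\pi(g)^T\pi(g))|^{-1}$ through the diagonal entries of $\pi(\alpha(g))$, handle $\|\pi(\eta(g))^{-1}\|$ polynomially, and then deduce~(ii) from~(i) via $\kappa(g)=g\,\eta(g)^{-1}\alpha(g)^{-1}$. The only noteworthy difference is that the paper makes the concrete choice $\pi=\Ad$ (with the basis from \cite[Lemma~13.3.5]{HN12} furnishing the required triangular/orthogonal form), whereas you leave $\pi$ abstract and flag its construction as the ``main obstacle''; taking $\pi=\Ad$ resolves that obstacle immediately.
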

\begin{proof}
(i) follows immediately from the definition of the maximal scale function and
$$|\exp(H_1+iH_2)^\nu|=e^{\nu(H_1)}\leq e^{\|\nu\|\cdot r_{\rm spec}(\ad(H_1))}=s_{\rm max}^{G_\C}(\exp(H_1+iH_2))^{\|\nu\|},$$
for $\|\nu\|:=\sup\{|\nu(H)|\;|\;H\in \a,\; r_{\rm spec}(\ad(H))\leq 1\}$. \\

(ii) In view of Lemma \ref{lemScaleFinRep}, it suffices to prove the assertion for the scale function $s_\pi$, for a finite-dimensional representation $(\pi,V)$ of $G_\C$ with discrete kernel. Fix a $U$-invariant scalar product on $V$ and let $\{v_1,\dots,v_n\}\subseteq V$ be a basis. Then, $\pi(g)^\ast=\pi(\theta(g))^{-1}$ implies that
$$s_\pi(g)\leq\sum_{i,j=1}^n |\langle v_i,\pi(g)v_j\rangle|+|\langle \pi(\theta(g))v_i,v_j\rangle|.
$$
Since $A_\C N_\C$ is a solvable Lie group with unipotent radical $N_\C$, the restrictions of the representations $\pi$ and $\pi\circ \theta$ to $N_\C$ are nilpotent (cf. \cite[Cor. 5.4.11]{HN12}). In particular, Corollary \ref{corUnip} implies that
$$\pi^{v_i,v_j}\circ \eta:\KANC \to \C\qand (\pi\circ \theta)^{v_i,v_j}\circ \eta:\KANC \to \C
$$
are morphisms, i.e. they are contained in the coordinate ring
$$\C[\KANC]=\left\{\frac{p}{\Delta^k}\;|\;p\in \C[G_\C],\quad k\in \N_0 \right\},
$$
where $\Delta:G_\C\to \C$ is as in (\ref{KANC}). The coordinate ring $\C[G_\C]$ of $G_\C$ is generated by matrix coefficients of finite-dimensional representations of $G_\C$, which are dominated by multiples of powers of $s_{\rm max}^{G_\C}$. Hence, there exist $C,M,N>0$ such that
$$s_\pi(\eta(g))\leq C\cdot s_{\rm max}^{G_\C}(g)^M\cdot \sum_{k=0}^N |\Delta^{-k}(g)|
$$
and since $\Delta(g)=\Delta(\alpha(g))=\alpha(g)^\mu$, for $\mu=\sum_{i=1}^r\omega_i\in \a^\ast$, the assertion follows from (i).\\

(iii) The submultiplicativity of the scale function $s_{\rm max}^{G_\C}$ and 
$$\kappa(g)=g\eta(g)^{-1}\alpha(g)^{-1},\qfor g=\kappa(g)\alpha(g)\eta(g)\in K_\C A_\C N_\C,$$
imply that
$$s_{\rm max}^{G_\C}(\kappa(g))\leq s_{\rm max}^{G_\C}(g)s_{\rm max}^{G_\C}(\alpha(g)^{-1})s_{\rm max}^{G_\C}(\eta(g)^{-1}).
$$
In view of $s_{\rm max}^{G_\C}(g_0^{-1})=s_{\rm max}^{G_\C}(g_0)$, for all $g_0\in G_\C$, (iii) follows from (ii).
\end{proof}  
\subsection{Asymptotic behaviour at the boundary of the crown}\label{subsecasympA}
It was shown in multiple papers \cite{KS04,Hu02,GM03,Ba03} that one has the inclusion 
\begin{equation}\label{EqIncl}
\Xi_{G_\C}=G_\R\exp(i\Omega_\p)K_\C\subseteq N_\C A_\C K_\C,\qfor \Omega_\p=\left\{x\in \p\;|\;r_{\rm spec}(\ad(x))<\pitwo\right\}.
\end{equation}
To our knowledge, the shortest proof of this fact is \cite[Cor. 3.3]{KSch09}. Let 
$$q_{\xig}:\wxi\to \xig\qand q_{K_\C}:\wt{K}_\C\to K_\C$$
denote the universal coverings of $\xig$ and $K_\C$ respectively. Then, with the same arguments as in \cite[Thm. 1.8]{KS04} and lifting everything to the universal covering spaces, it follows that there exist \textit{complexified Iwasawa component maps}
\begin{align}
&\wxi\times K\to \wt{K}_\C, \quad(x,k)\mapsto \kappa(x^{-1}k), \label{Iwasawa1}\\
&\wxi\times K\to \a_\C, \;\quad(x,k)\mapsto H(x^{-1}k), \label{Iwasawa2}\\
&\wxi\times K\to N_\C, \quad(x,k)\mapsto \eta(x^{-1}k), \label{Iwasawa3}
\end{align}
which are holomorphic in the first variable and analytic  in the second variable and satisfy
$$q_{\xig}(x)^{-1}k=q_{K_\C}(\kappa(x^{-1}k))\exp(H(x^{-1}k))\eta(x^{-1}k),\qforall x\in \wxi\qand k\in K.
$$
We prefer the abuse of notation of writing $x^{-1}k$ in the arguments of the Iwasawa component maps, for $x\in \wxi$ and $k\in K$, as the passing to the coverings does not contribute to the asymptotic and it makes the notation clearer in our opinion. 
\begin{rem}\label{remHolExtKfin}
\rm{We now briefly recall the motivation for the results of this subsection. Let $(\sigma,W)$ be a finite-dimensional representation of $P_{\rm min}$ which is unitarizable when restricted to $M$. Let $(\pi_\sigma,\cH_\sigma)$ be the corresponding principal series representation and $f\in \cH_\sigma^{[K]}$ be a $K$-finite vector (cf. (\ref{defPrinc1})). Since $f:K\to W$ is a left $K$-finite smooth function, $$\cE:=\spann_\C \{L_k f\;|\;k\in K \} \subseteq C^\infty(K,W),\qfor L_{k_0}f(k):=f(k_0^{-1}k),$$ is a finite-dimensional $K$-invariant subspace and thus the orbit map $k\mapsto L_{k^{-1}} f$ extends to a holomorphic map $\wt{K}_\C\to \cE\subseteq C^\infty(K,W)$. Post-composing with the evaluation at $e$ defines a holomorphic extension of $f$ to $f:\wt{K}_\C\to W$. The holomorphic extension of the orbit map $$\pi_\sigma^f:G\to \cH_\sigma,\quad g\mapsto \pi_\sigma(g)f$$
is explicitly given by 
$$(e^{it\del\pi_\sigma(h)}f)(k)=\sigma_\rho(\beta(\exp(-ith)k))^{-1}f(\kappa(\exp(-ith)k)),$$
for all $h\in \del\Omega_\p$ and $|t|<1$.}
\end{rem}
As we are interested in asymptotic estimates $\|e^{it\del\pi_\sigma(h)}\|$ as $t\nearrow 1$ and the results from the previous subsection reduce the $K_\C$- and $N_\C$-asymptotic to the $A_\C$-asymptotic, we study the asymptotic behaviour of
\begin{equation}\label{EqAsymp}
s_{\rm max}^{G_\C}(\alpha(\exp(-ith)k)),\qfor h\in \del\Omega_\p,\quad k\in K\quad\text{as}\quad t\nearrow 1.
\end{equation}
We may w.l.o.g. assume that $G$ is \textbf{linear} and identify $G$ with $G_\R=\eta_G(G)$. In particular, the subgroup $K\subseteq G$ is compact. Let $\Sigma:=\Sigma(\g,\a)\subseteq\a^\ast$ be the set of restricted roots associated with $\a$ and $\Omega_\a:=\a\cap \Omega_\p$. For $h\in \a$, one has
$$\spec(\ad(h))=\{\gamma(h)\;|\;\gamma\in \Sigma\}\qandthus \Omega_\a=\{h\in \a\;|\;|\gamma(h)|<\tpitwo,\;\text{for all}\; \gamma\in \Sigma\}.$$
Let $\{\omega_1,\dots,\omega_r\}\subseteq \a^\ast$ be the fundamental spherical weights one obtains from the Cartan-Helgason Theorem (cf. \cite[Thm. V.4.1]{He84}). Then, $\{\omega_1,\dots,\omega_r\}$ is a basis of $\a^\ast$ and thus
$$\|\cdot\|:\a\to [0,\infty),\quad H\mapsto \sum_{i=1}^r|\omega_i(H)|
$$
defines a norm and, for $c>0$ sufficiently large, one has
$$s_{\rm max}^{G_\C}(\exp(H_1+iH_2))\leq e^{c \|H_1\|},\qforall H_1,H_2\in \a.$$
\begin{rem}
\rm{Let $(\pi,V)$ be a finite-dimensional complex representation of $G_\C$. Let $\Theta$ be the antiholomorphic extension of $\theta$ to $G_\C$. Then, $\Theta$ is the Cartan involution on $G_\C$ with respect to $U=\langle \exp(\k+i\p)\rangle$ and we choose a $U$-invariant inner product $\scp$ on $V$. Then $d\pi(\a)\subseteq \gl(V)$ is an abelian subalgebra of hermitian endomorphisms, i.e. $V$ decomposes into $\a$-weight spaces. Assume that $V$ is a highest weight module, for the restricted root system, with highest weight $\lambda\in \a^\ast$, i.e. $V$ is generated by a weight space $V_\lambda$ which is fixed pointwise by $N_\C$. For $\mu\in \a^\ast$, we denote the corresponding character by $$\mu:A\to (0,\infty),\quad a=\exp(H)\mapsto a^\mu:=e^{\mu(H)},\qfor H\in \a.$$
We denote the $\a$-weights of $V$ with $\cP\subseteq \a^\ast$. One has a weight space decomposition
\begin{align*}
V=\sum_{\mu\in \cP} V_\mu,\qfor V_\mu:=\{v\in V\;|\; \pi(a)v=a^\mu v,\qforall a\in A\}.
\end{align*}
Since the elements in $d\pi(\a)$ are hermitian, the weight spaces are mutually orthogonal. For a normalised highest weight vector $v_\lambda\in V_\lambda$, it follows from the fact that $\pi\vert_K$ is unitary that 
\begin{align*}
\|\pi(kan)v_\lambda\|^2=a^{2\lambda},\qfor k\in K,\;a\in A,\;n\in N.
\end{align*}
For $k\in K$, let $v_\mu(k)\in V_\mu$ be the unique element such that $\pi(k)v_\lambda=\sum_{\mu\in \cP}v_\mu(k)$ and observe that 
$$\alpha(ak)^{2\lambda}=\|\pi(ak)v_\lambda\|^2=\sum_{\mu\in \cP}a^{2\mu}\|v_\mu(k)\|^2,$$
for $a\in A$ and $k\in K$, since the weight spaces are orthogonal.
}
\end{rem}
In the following, we consider a fixed complex finite-dimensional highest restricted weight representation $(\pi,V)$ of $G_\C$ with highest weight $\lambda\in \a^\ast$ and $\a$-weights $\cP\subseteq \a^\ast$.
\begin{prop}\label{formulaalpha}
In the notation of the previous remark and, for  $h\in\del\Omega_\a$ and $|t|<1$, one has  
\begin{equation}\label{formalpha}
\left|\alpha(\exp(-ith)k)^{2\lambda} \right|^2=\sum_{\mu,\nu\in \cP}\cos(2t(\mu-\nu)(h))\|v_\mu(k)\|^2\|v_\nu(k)\|^2.
\end{equation}
In particular, one has $|\alpha(\exp(-ith)k)^{\lambda}|\leq 1$, for $|t|<1$.
\end{prop}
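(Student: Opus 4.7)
The plan is to derive (\ref{formalpha}) by analytically continuing in the variable $a$ the identity $\alpha(ak)^{2\lambda} = \sum_{\mu\in \cP}\chi_{2\mu}(a)\|v_\mu(k)\|^2$ established for $a\in A$ in the preceding remark, and then specialising to $a = \exp(-ith)\in A_\C$.

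First I would fix $k\in K$ and note that, since $h\in \partial\Omega_\a$, one has $|\gamma(th)| < \pitwo$ for every $\gamma\in \Sigma$ whenever $|t|<1$, so $-th\in \Omega_\a$ and $\exp(-ith)k$ lies in the principal crown bundle. Hence $\alpha^{2\lambda} = e^{2\lambda\circ H}$ is holomorphic on $\wt{\Xi}_{G_\C}$ (Definition \ref{defComplIwasawa}), and the map $z\mapsto \alpha(\exp(zh)k)^{2\lambda}$ is holomorphic on a complex strip around $\R$. The right-hand side $\sum_{\mu\in \cP}\chi_{2\mu}(\exp(zh))\|v_\mu(k)\|^2 = \sum_\mu e^{z\mu(h)}\|v_\mu(k)\|^2$ is entire in $z\in\C$. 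Both sides agree for $z\in \R$, so by the identity theorem they agree on the common domain. Substituting $z = -it$ yields
$$\alpha(\exp(-ith)k)^{2\lambda} = \sum_{\mu\in \cP} e^{-2it\mu(h)}\,\|v_\mu(k)\|^2.$$

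Next, taking the squared modulus and using that $t$, $\mu(h)$ and $\|v_\mu(k)\|^2$ are all real,
\begin{align*}
\left|\alpha(\exp(-ith)k)^{2\lambda}\right|^2
&= \sum_{\mu,\nu\in \cP} e^{-2it(\mu-\nu)(h)}\,\|v_\mu(k)\|^2\|v_\nu(k)\|^2.
\end{align*}
The term indexed by $(\nu,\mu)$ is the complex conjugate of the one indexed by $(\mu,\nu)$, so under the symmetric double sum the imaginary parts of $e^{-2it(\mu-\nu)(h)}$ cancel and one recovers exactly (\ref{formalpha}).

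For the bound, I would use $\cos\leq 1$ term-by-term together with the identity $\sum_{\mu\in \cP}\|v_\mu(k)\|^2 = \|\pi(k)v_\lambda\|^2 = 1$ (orthogonality of weight spaces and unitarity of $\pi\vert_K$), to obtain
$$\left|\alpha(\exp(-ith)k)^\lambda\right|^4 = \left|\alpha(\exp(-ith)k)^{2\lambda}\right|^2 \leq \sum_{\mu,\nu\in \cP}\|v_\mu(k)\|^2\|v_\nu(k)\|^2 = 1,$$
whence $|\alpha(\exp(-ith)k)^\lambda|\leq 1$. The only step demanding care is the analytic continuation of the identity from $A$ to the one-parameter family $\exp(zh)$; once that is granted, the rest is a short trigonometric calculation with no real obstacle.
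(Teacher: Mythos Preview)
Your proof is correct and follows essentially the same approach as the paper: analytically continue the real identity $\alpha(ak)^{2\lambda}=\sum_{\mu}\chi_{2\mu}(a)\|v_\mu(k)\|^2$ along the one-parameter family $z\mapsto\exp(zh)k$, specialize to $z=-it$, and take the modulus squared to obtain the cosine formula. The paper omits the explicit derivation of the bound $|\alpha(\exp(-ith)k)^\lambda|\leq 1$, whereas you spell out the $\cos\leq 1$ argument and the normalization $\sum_\mu\|v_\mu(k)\|^2=1$; apart from a harmless typo (your $e^{z\mu(h)}$ should read $e^{2z\mu(h)}$), the arguments coincide.
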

\begin{proof}
Recall from (\ref{Iwasawa2}) that the Iwasawa component map $H:G\to \a$ has a holomorphic extension $H:\wt{\Xi}_{G_\C}\to \a_\C$. Therefore, the map $\R\ni t\mapsto \alpha(\exp(-th)k)\in A$ extends to a holomorphic map on the open strip $\cS_{\pm1}:=\{z\in \C\;|\; |\text{Im}(z)|<1\}\to \a_\C$ and thus one has by uniqueness of holomorphic extension
\begin{align*}
0\neq e^{2\lambda(H(\exp(-ith)k))}= \alpha(\exp(-ith)k)^{2\lambda}= \sum_{\mu\in \cP}e^{-2it\mu(h)}\|v_\mu(k)\|^2,
\end{align*}
for all $|t|<1$ and $k\in K$. This in particular implies that 
\begin{align*}
\left|\alpha(\exp(-ith)k)^{2\lambda} \right|^2=&
\sum_{\mu,\nu\in \cP}e^{2it(\nu-\mu)(h)}\|v_\mu(k)\|^2\|v_\nu(k)\|^2\\
=&\sum_{\mu,\nu\in \cP}\cos(2t(\mu-\nu)(h))\|v_\mu(k)\|^2\|v_\nu(k)\|^2.\tag*{\qedhere}
\end{align*}
\end{proof}
\begin{lem}\label{coeff}
For $h\in \del\Omega_\a$, $k\in K$ and $0\leq t<1$, one has
\begin{equation}\label{fhk}
f_{h,k}(t):=\left|\alpha(\exp(-ith)k)^{2\lambda} \right|^2=\sum_{n\geq 0} a_n(h,k) \left(1-t \right)^{n},
\end{equation}
where the coefficient functions $a_n:\del\Omega_\a\times K\to \R$ are continuous and
$$
\|a_n\|_\infty\leq \frac{(2C_{\rm max})^{n}}{n!},\qfor C_{\rm max}:=\max\{|(\mu-\nu)(h)|\;:\;\mu,\nu \in \cP\}.$$
\end{lem}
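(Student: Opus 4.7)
The plan is to expand $\cos(2t(\mu-\nu)(h))$ in powers of $(1-t)$ and feed the result into the formula from Proposition \ref{formulaalpha}. Writing $\theta_{\mu\nu} := 2(\mu-\nu)(h)$ and using the angle subtraction identity
$$\cos(t\theta_{\mu\nu}) = \cos(\theta_{\mu\nu})\cos((1-t)\theta_{\mu\nu}) + \sin(\theta_{\mu\nu})\sin((1-t)\theta_{\mu\nu}),$$
I would substitute the Maclaurin series for the cosine and sine on the right-hand side. Collecting the $(1-t)^n$ terms gives $\cos(t\theta_{\mu\nu}) = \sum_{n\geq 0} c_n(\theta_{\mu\nu})(1-t)^n$ with coefficients
$$c_n(\theta) = \frac{\theta^n}{n!} \cdot \begin{cases} (-1)^{n/2}\cos(\theta), & n \text{ even},\\ (-1)^{(n-1)/2}\sin(\theta), & n \text{ odd}. \end{cases}$$
In particular $|c_n(\theta)| \leq |\theta|^n/n!$ for every $\theta \in \R$.

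Next I would substitute this into Proposition \ref{formulaalpha}. Since $V$ is finite dimensional, $\cP$ is a finite set, so interchanging the finite sum over $(\mu,\nu)\in\cP\times\cP$ with the series in $n$ is automatic. This yields expression (\ref{fhk}) with
$$a_n(h,k) = \sum_{\mu,\nu \in \cP} c_n(2(\mu-\nu)(h)) \, \|v_\mu(k)\|^2 \|v_\nu(k)\|^2.$$
Using $\|v_\lambda\|=1$ and the unitarity of $\pi|_K$ on $V$, one has $\sum_{\mu\in\cP}\|v_\mu(k)\|^2 = \|\pi(k)v_\lambda\|^2 = 1$, hence $\sum_{\mu,\nu}\|v_\mu(k)\|^2\|v_\nu(k)\|^2 = 1$. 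Combining this with the estimate $|c_n(\theta_{\mu\nu})|\leq |\theta_{\mu\nu}|^n/n! \leq (2C_{\max})^n/n!$ gives exactly the claimed bound
$$|a_n(h,k)| \leq \frac{(2C_{\max})^n}{n!}.$$

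For continuity, each $c_n$ is continuous in $\theta$, hence $c_n(2(\mu-\nu)(h))$ is continuous in $h\in\partial\Omega_\a$; the functions $k\mapsto \|v_\mu(k)\|^2$ are continuous on $K$ since $v_\mu(k)$ is the orthogonal projection of $\pi(k)v_\lambda$ onto the weight space $V_\mu$ and both the representation and the projection are continuous. A finite sum of products of continuous functions is continuous, so $a_n$ is continuous on $\partial\Omega_\a \times K$.

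There is no serious obstacle here: the one place to be careful is ensuring that the rearrangement leading to (\ref{fhk}) is legitimate, but because $\cP$ is finite this reduces to a finite number of genuine power series identities for $\cos(t\theta_{\mu\nu})$, each of which is just the standard Taylor expansion of an entire function about $t=1$. All other steps are direct bookkeeping.
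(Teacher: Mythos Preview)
Your proposal is correct and follows essentially the same route as the paper: both expand the cosine terms from Proposition~\ref{formulaalpha} in a Taylor series about $t=1$, read off the explicit coefficient formulas, and bound them using $\sum_{\mu,\nu}\|v_\mu(k)\|^2\|v_\nu(k)\|^2=1$. Your use of the angle subtraction identity is just an alternative (and arguably cleaner) way to compute the same Taylor coefficients that the paper obtains by differentiating $\cos$ and $\sin$ directly; you are also slightly more explicit about continuity and about why the interchange of sums is harmless.
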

\begin{proof}
Using Taylor expansion of (\ref{formalpha}) in $t=1$ and absorbing the alternating factors $(-1)^n$, one obtains from the derivatives of $\sin$ and $\cos$, by changing $(t-1)^n$ to $(1-t)^n$ in (\ref{fhk}), that the coefficient functions $a_n$ are explicitly given by
\begin{align*}
a_{2n}(h,k)&=\frac{2^{2n}}{(2n)!} \sum_{\mu,\nu\in\cP}\cos(2(\mu-\nu)(h))(\mu-\nu)(h)^{2n}\|v_\mu(k)\|^2\|v_\nu(k)\|^2\\
 a_{2n+1}(h,k)&= \frac{2^{2n+1}}{(2n+1)!} \sum_{\mu,\nu\in\cP}\sin(2(\mu-\nu)(h))(\mu-\nu)(h)^{2n+1}\|v_\mu(k)\|^2\|v_\nu(k)\|^2.
\end{align*}
In particular, $\sum_{\mu,\nu\in \cP}\|v_\mu(k)\|^2\|v_\nu(k)\|^2=1$ implies $\|a_n\|_\infty\leq \frac{1}{n!}(2C_\text{max})^{n}$.
\end{proof}
\begin{lem}\label{lemN0}
For $N\in \N_0$, define 
\begin{align*}
A_N:=\{(h,k)\in \del\Omega_\a\times K\;|\;a_{N}(h,k)\neq 0,\; a_n(h,k)=0\qforall 0\leq n<N\}.
\end{align*}
The set $A_N$ coincides with the set of $(h,k)\in \del\Omega_\a\times K$ satisfying
\begin{align*}
0<\lim_{t\to 1}(1-t)^{N}|\alpha(\exp(-ith)k)^{-2\lambda}|^2<\infty
\end{align*}
and there exists $N\in \N_0$ maximal such that $A_N\neq \emptyset$.
\end{lem}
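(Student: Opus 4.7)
The plan is to prove the two parts of the statement separately — the characterization of $A_N$ in terms of the asymptotic behaviour as $t\to 1$, and the existence of a maximal $N$ with $A_N\neq\emptyset$ — both relying on the convergent power series from Lemma \ref{coeff}.

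For the characterization, I first note that the inclusion $\Xi_{G_\C}\subseteq K_\C A_\C N_\C$ and the fact that $a\mapsto a^{2\lambda}$ is a character $A_\C\to\C^\ast$ force $f_{h,k}(t) := |\alpha(\exp(-ith)k)^{2\lambda}|^2 > 0$ for all $t\in[0,1)$, while the bound $\|a_n\|_\infty \leq (2C_{\max})^n/n!$ makes $f_{h,k}(t) = \sum_{n\geq 0} a_n(h,k)(1-t)^n$ a convergent expansion for every $t\in\R$. If $(h,k)\in A_N$, then $f_{h,k}(t) = a_N(h,k)(1-t)^N(1+o(1))$ near $t=1$; the positivity of $f_{h,k}$ on $[0,1)$ forces $a_N(h,k) > 0$, and consequently $\lim_{t\to 1}(1-t)^N|\alpha(\exp(-ith)k)^{-2\lambda}|^2 = 1/a_N(h,k)\in(0,\infty)$. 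Conversely, Proposition \ref{formulaalpha} at $t=0$ yields $f_{h,k}(0) = 1$, so $f_{h,k}\not\equiv 0$ and there is a smallest index $M$ with $a_M(h,k)\neq 0$; the same Taylor argument shows that $(1-t)^N/f_{h,k}(t)$ has a positive finite limit precisely when $N = M$, giving $(h,k)\in A_M = A_N$.

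For the existence of a maximal $N$, I argue by contradiction using compactness. The set $\Omega_\a$ is bounded since the restricted roots span $\a^\ast$, so $\del\Omega_\a$ is compact; together with compactness of $K$ (which follows from the assumption that $G$ is linear made at the start of this subsection), $\del\Omega_\a\times K$ is compact. The coefficient functions $a_n:\del\Omega_\a\times K\to\R$ are continuous by the explicit formulas in Lemma \ref{coeff}. Assume there were no maximal $N$ with $A_N\neq\emptyset$, and pick $(h_n,k_n)\in A_{N_n}$ with $N_n\to\infty$; after passing to a convergent subsequence $(h_n,k_n)\to(h_\ast,k_\ast)$, continuity of each $a_j$ and the fact that $a_j(h_n,k_n) = 0$ whenever $N_n > j$ would force $a_j(h_\ast,k_\ast) = 0$ for every $j\in\N_0$. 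This would mean $f_{h_\ast,k_\ast}\equiv 0$, contradicting $f_{h_\ast,k_\ast}(0) = 1$.

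The main obstacle is this second part. Each $f_{h,k}$ is individually a nonzero real-analytic function and therefore has finite order of vanishing at $t = 1$, but it is not a priori clear that this order is uniformly bounded over $(h,k)$. Compactness of $\del\Omega_\a\times K$ together with continuity of the Taylor coefficients $a_n$ is exactly what is needed to upgrade pointwise finiteness of the order of vanishing to the uniform statement asserted in the lemma; everything else reduces to routine power series manipulations once Lemma \ref{coeff} is in place.
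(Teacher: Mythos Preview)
Your proof is correct and follows essentially the same approach as the paper: both parts rely on the convergent power series expansion from Lemma~\ref{coeff}, and the second part is established by contradiction using compactness of $\del\Omega_\a\times K$. Your execution of the second part is in fact slightly cleaner than the paper's --- you use continuity of each individual coefficient $a_j$ to conclude $a_j(h_\ast,k_\ast)=0$, whereas the paper routes the same conclusion through a tail estimate on the series $\sum_{m\geq n} a_m(h_n,k_n)(1-t)^m$.
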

\begin{proof}
It follows from $\|a_n\|_\infty\leq \frac{1}{n!}(2C_{\rm max})^n$ that $$(-1,1)\times \del\Omega_\a\times K\to (0,\infty),\quad (t,h,k)\mapsto \sum_{n\geq 0}a_n(h,k)(1-t)^n$$
is continuous. In particular, one can interchange the limit as $t\to 1$ and the summation which together with
\begin{align*}
|\alpha(\exp(-ith)k)^{2\lambda}|^2=\sum_{n\geq 0}a_n(h,k)(1-t)^n
\end{align*}
implies that $A_N$ coincides  with the set of $(h,k)\in \del\Omega_\a\times K$ satisfying 
\begin{align*}
0<\lim_{t\to1}(1-t)^{N}|\alpha(\exp(-ith)k)^{-2\lambda}|^2<\infty.
\end{align*}
There exists $N\in \N_0$ such that $A_N\neq \emptyset$, since otherwise $a_n(h,k)=0$ would follow, for all $n\in \N_0$ and $(h,k)\in \del\Omega_\a\times K$, which contradicts $$\left|\alpha(\exp(-ith)k)^{2\lambda} \right|^2\neq 0.$$ Hence such an $N\in \N_0$ exists and there exists a maximal one since otherwise there exist infinitely many $n\in \N_0$ and $(h_n,k_n)\in\del\Omega_\a\times K$ such that $a_m(h_n,k_n)=0$ for all $0\leq m < n$. We may w.l.o.g. assume that $(h_n,k_n)\to (h_0,k_0)\in \del\Omega_\a\times K$ as $\del\Omega_\a\times K$ is compact and thus
\begin{align*}
&\lim_{n\to \infty} \sum_{m\geq 0} a_m(h_n,k_n)\left(1-t\right)^{m}=\lim_{n\to \infty} \sum_{m\geq n} a_m(h_n,k_n)\left(1-t\right)^{m}\\
\leq& \lim_{n\to \infty} \sum_{m\geq n} \frac{(2C_\text{max})^{m}}{m!}\left(1-t\right)^{m}=0
\end{align*}
contradicts $\left|\alpha(\exp(-ith_0)k_0)^{2\lambda} \right|^2\neq 0$.
\end{proof}
\begin{prop}
Let $X$ be a compact topological space and suppose there are continuous functions $a_n:X\to \C$ such that $\|a_n\|_\infty\leq \frac{C^n}{n!}$, for some $C>0$, and let
$$f_x(r):=\sum_{n=0}^\infty a_n(x) (1-r)^n\neq 0,\qforall x\in X\qand r\in (0,1).
$$
Suppose there exists $N\in \N_0$ maximal such that
$$A_N:=\{x\in X\;|\; a_n(x)=0,\qforall 0\leq n\leq N-1,\quad a_N(x)\neq 0\}
$$
is non-empty. Then, one has
$$\limsup_{r\nearrow 1}\left( \sup_{x\in X} \frac{(1-r)^N}{|f_x(r)|}\right)<\infty.
$$
\end{prop}
\begin{proof}
We argue by contradiction and thus suppose that there exists a sequence $r_n$ in $(0,1)$ such that $r_n\nearrow 1$ as $n\to \infty$ and
$$\lim_{n\to \infty}  \sup_{x\in X} \frac{(1-r_n)^N}{|f_x(r_n)|}=\infty.
$$
Since $X$ is compact, there exists a sequence $x_n\in X$ such that 
\begin{align*}
\sup_{x\in X} \frac{1}{|f_x(r_n)|}=\frac{1}{|f_{x_n}(r_n)|}.
\end{align*}
In particular, it follows from (\ref{fhk}) that
\begin{equation}\label{limit2}
0=\lim_{n\to \infty}(1-r_n)^{-N}|f_{x_n}(r_n)|=\sum_{l=0}^\infty a_l(x_n)(1-r_n)^{l-N}.
\end{equation}
Since $\|a_l\|_\infty \leq \frac{C^l}{l!}$ we may switch the following sum and limit and obtain 
\begin{equation}\label{limit1}
\lim_{n\to \infty}\sum_{l=N+1}^\infty a_l(x_n)(1-r_n)^{l-N}=0.
\end{equation}
As $X$ is compact, we may w.l.o.g. assume that $x_n\to x\in X$ converges. In particular, (\ref{limit2}) and (\ref{limit1}) imply that $$0=\lim_{n\to \infty}(1-r_n)^N\sum_{l=0}^N a_l(x_n)(1-r_n)^{l-N}=\lim_{n\to \infty}a_0(x_n).$$
Therefore, one obtains that
$$0=\lim_{n\to \infty}(1-r_n)\sum_{l=1}^Na_l(x_n)(1-r_n)^{l-N}=\lim_{n\to \infty}\sum_{l=0}^{N-1}a_l(x_n)(1-r_n)^{l-(N-1)}.
$$
Repeating this argument implies that $a_l(x_n)\to 0$, for $0\leq l\leq N$. Since the functions $a_n$ are continuous, it follows that $a_l(x)=0$, for all $0\leq l\leq N$. In particular, there exists $N^\prime>N $ such that $x\in A_{N^\prime}$. This contradicts the maximality of $N$.
\end{proof}
\begin{thm}\label{thmasymp}
For $\mu\in \a^\ast$, there exists $N\geq 0$ such that
\begin{align*}
\limsup_{t\to 1}\left(\left(1-t\right)^N\sup_{(h,k)\in \del\Omega_\p\times K}|\alpha(\exp(-ith)k)^{\mu}|\right)<\infty.
\end{align*}
\end{thm}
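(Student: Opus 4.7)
The plan is to reduce general $\mu\in\a^\ast$ to a basis of fundamental restricted weights via Proposition~\ref{formulaalpha}, and then to extract a uniform polynomial lower bound for $|\alpha(\exp(-ith)k)^{2\omega}|^{2}$ on the compact set $\del\Omega_\a\times K$ by combining the Taylor expansion from Lemma~\ref{coeff} with the maximality in Lemma~\ref{lemN0}. The theorem will then follow by collecting polynomial exponents across the fundamental weights.

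First I would reduce the supremum from $\del\Omega_\p\times K$ to $\del\Omega_\a\times K$. Since $\alpha(k'g)=\alpha(g)$ for $k'\in K$ and $\Omega_\p=\Ad(K)\Omega_\a$, the substitution $h\mapsto\Ad(k')h$, $k\mapsto(k')^{-1}k$ identifies the two suprema. Expanding $\mu=\sum_{i=1}^{n}c_i\omega_i$ in the basis of fundamental restricted weights, and invoking the preceding remark to produce, for each $i$, a spherical representation of $G_\C$ with highest restricted weight $\omega_i$, Proposition~\ref{formulaalpha} yields $|\alpha(\exp(-ith)k)^{\omega_i}|\leq 1$. Since $|\alpha^{\mu}|=\prod_i |\alpha^{\omega_i}|^{c_i}$, only the negative exponents can blow up, so the theorem reduces to showing that for each $i$ there exists $N_i\geq 0$ with $|\alpha(\exp(-ith)k)^{-\omega_i}|\lesssim (1-t)^{-N_i}$ uniformly in $(h,k)\in\del\Omega_\a\times K$ as $t\to 1$.

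Fix a fundamental weight $\omega$ with its spherical representation and set $f_{h,k}(t):=|\alpha(\exp(-ith)k)^{2\omega}|^{2}$; Lemma~\ref{coeff} gives $f_{h,k}(1-s)=\sum_{n\geq 0}a_n(h,k)s^n$ with $\|a_n\|_\infty\leq (2C_{\rm max})^n/n!$. Let $N_0$ be the maximal integer with $A_{N_0}\neq\emptyset$ from Lemma~\ref{lemN0}. A first step is to show that $A_{N_0}$ is closed in the compact set $\del\Omega_\a\times K$: if $(h_n,k_n)\in A_{N_0}$ converges to $(h_0,k_0)$, continuity of the $a_n$ forces $a_n(h_0,k_0)=0$ for $n<N_0$, so the order of vanishing $n^{\ast}(h_0,k_0)$ at $t=1$ is at least $N_0$, while maximality of $N_0$ gives equality, so $(h_0,k_0)\in A_{N_0}$. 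Compactness then yields $c_0:=\inf_{A_{N_0}}a_{N_0}>0$. The crux is to upgrade this to the uniform estimate
\[
f_{h,k}(1-s)\geq C\,s^{N_0},\qquad (h,k)\in\del\Omega_\a\times K,\ 0<s\leq s_0,
\]
from which $|\alpha(\exp(-ith)k)^{-\omega}|\leq C^{-1/2}(1-t)^{-N_0/2}$ follows.

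The main obstacle is precisely this uniform lower bound. The compact set $\del\Omega_\a\times K$ stratifies as $\bigsqcup_{n\in\{0,2,\ldots,N_0\}}S_n$ with $S_n=\{(h,k):n^{\ast}(h,k)=n\}$; the orders are even since $f_{h,k}\geq 0$ on $\R$ forces a positive leading Taylor coefficient, and $\overline{S_n}\subseteq\bigcup_{m\geq n}S_m$ by continuity. On each stratum $a_{n^{\ast}(h,k)}(h,k)>0$, but this degenerates at the boundary with higher strata, which is the delicate point. I would handle it via a finite open cover of $\del\Omega_\a\times K$: near $A_{N_0}$, the continuity $a_{N_0}\geq c_0/2$ combined with the constraints on the lower coefficients forced by the global non-negativity $f_{h,k}\geq 0$ prevents them from outweighing $a_{N_0}\,s^{N_0}$ for small $s$, giving $f_{h,k}(1-s)\geq(c_0/4)\,s^{N_0}$; away from $A_{N_0}$ each point has $n^{\ast}<N_0$ and a neighbourhood on which $a_{n^{\ast}}$ is bounded below, combined with $s^{n^{\ast}}\geq s^{N_0}$ for $s\leq 1$, yields the same bound. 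The tail $\sum_{n>N_0}a_n(h,k)s^n$ is globally $O(s^{N_0+1})$ by the uniform estimate $\|a_n\|_\infty\leq(2C_{\rm max})^n/n!$ from Lemma~\ref{coeff}, and passing to a finite sub-cover produces the uniform constant $C>0$.
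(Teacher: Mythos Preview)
Your reduction from $\del\Omega_\p$ to $\del\Omega_\a$ and then to the negative fundamental weights is correct and matches the paper's opening move, as is the observation that $A_{N_0}$ is closed and that $a_{N_0}>0$ there. The gap lies exactly where you flag the ``crux'': the passage to a uniform lower bound $f_{h,k}(1-s)\geq Cs^{N_0}$.

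The appeal to non-negativity does not close this. Near a point of $A_{N_0}$ the lower coefficients $a_0,\dots,a_{N_0-1}$ are small, but ``small'' is not the same as ``dominated by $a_{N_0}s^{N_0}$ for every small $s$''. Writing $g_{h,k}(t)=\sum_\mu\|v_\mu(k)\|^2e^{-2it\mu(h)}$ so that $f_{h,k}=|g_{h,k}|^2$, one has $a_0=|g(1)|^2$, $a_1=-2\Re\bigl(g'(1)\overline{g(1)}\bigr)$, and $a_2=|g'(1)|^2+\Re\bigl(g''(1)\overline{g(1)}\bigr)$. Near a point of $A_2$ the quadratic part $a_0+a_1s+a_2s^2$ is non-negative (its discriminant is $-4\bigl(\Im(g'(1)\overline{g(1)})\bigr)^2+O(|g(1)|)$), but its minimum value is of order $\bigl(\Im(g'(1)\overline{g(1)})\bigr)^2/|g'(1)|^2$, which can be $o(|g(1)|^2)$ whenever $g(1)\to 0$ along a direction in $\C$ nearly parallel to $g'(1)$; at the minimizer $s_\ast\sim|g(1)|/|g'(1)|$ one then gets $f_{h,k}(1-s_\ast)=O(s_\ast^{3})$, defeating $f\geq Cs^{2}$. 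Nothing in your cover argument excludes this scenario, and the same phenomenon recurs at every boundary between strata $A_m$ and $A_{m+2}$, so a finite subcover cannot produce a uniform constant: the neighbourhoods on which $a_{n^\ast}$ stays bounded below shrink precisely where they are needed.

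The paper avoids this entirely by arguing by contradiction rather than seeking a direct lower bound. Assuming $(1-t_n)^{N_0}\sup_{(h,k)}|\alpha(\exp(-it_nh)k)^{-2\omega}|^2\to\infty$, one selects maximizers $(h_n,k_n)$, so that $(1-t_n)^{-N_0}f_{h_n,k_n}(t_n)\to 0$, and then uses the uniform tail estimate from Lemma~\ref{coeff} together with an iterative manipulation of the finite Taylor sum to force $a_l(h_n,k_n)\to 0$ for all $0\leq l\leq N_0$. Any limit point of $(h_n,k_n)$ then lies in some $A_{N'}$ with $N'>N_0$, contradicting the maximality from Lemma~\ref{lemN0}. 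At no stage is a uniform lower bound on $f$ asserted.
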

\begin{proof}
Let $\{\omega_1,\dots,\omega_r\}\subseteq \a^\ast$ be the basis of fundamental spherical weights. Then there exist $c_i\in \R$ such that $\mu=c_1\omega_1+\dots c_n\omega_n$ and $|\alpha(\exp(-ith)k)^{\omega_j}|$ is bounded from above (cf. Proposition \ref{formulaalpha}), the factors corresponding to $c_j>0$ are negligible. Hence it suffices to prove the assertion for $\mu=-\omega$, where $\omega$ is a fundamental weight. Let $(\pi,V)$ be the corresponding irreducible finite-dimensional spherical representation. Since $\pi$ is irreducible, it is a highest weight representation and thus the results from this subsection are applicable. We may assume that the supremum is taken over $\del\Omega_\a\times K$ since $\Ad(K)\del\Omega_\a=\del\Omega_\p$ and $\alpha(gk_2)=\alpha(k_1^{-1}gk_1k_2)$, for $g\in G$ and $k_1,k_2\in K$. Consider
$$X:= \del\Omega_\a \times K\qand f_{x,k}(t):= |\alpha\exp(-itx)k)^{2\omega}|^2=\sum_{n=0}^\infty a_n(x,k)(1-t)^n\neq 0,$$
for $t\in (0,1)$. Lemma \ref{lemN0} implies that there is an $N\in \N_0$ maximal such that  
$$A_N=\{(x,k)\in \del\Omega_\p\times K\;| a_n(x,k)=0\qforall 0\leq n<N,\quad a_{N}(x,k)\neq 0\}
$$
is non-empty and Lemma \ref{coeff} implies $\|a_n\|_\infty\leq \frac{(2C_{\rm max})^n}{n!}$. Therefore, the assertion follows from the previous proposition.
\end{proof}
\begin{thm}\label{ThmMainResult}
Let $G$ be a connected semisimple Lie group. Then there exists $N>0$ such that the functions
\begin{align*}
{\rm(i)}\;&\Omega_\p\ni x\mapsto \sup_{k\in K} s_{\rm max}^{G_\C}(\kappa(\exp(-ix)k)),\\
{\rm(ii)}\;&\Omega_\p\ni x\mapsto \sup_{k\in K}s_{\rm max}^{G_\C}(\alpha(\exp(-ix)k)),\\
{\rm(iii)}\;&\Omega_\p\ni x\mapsto \sup_{k\in K}s_{\rm max}^{G_\C}(\eta(\exp(-ix)k)),
\end{align*}
are bounded by a constant multiple of $\left(\pitwo-r_{\rm spec}(\ad(x))\right)^{-N}$.
\end{thm}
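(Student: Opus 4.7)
The plan is to reduce the bounds on $\kappa$ and $\eta$ in (i), (iii) to the bound on $\alpha$ in (ii) via Proposition \ref{propScales}, then to reduce the sup over $\Omega_\p \times K$ to a sup over $\del\Omega_\a \times K$ by $K$-equivariance, and finally to deduce the polynomial blow-up from Theorem \ref{thmasymp} applied to a basis of $\a^\ast$.

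First, by Remark \ref{remscale}(b), I may assume $G$ is linear, so $K\subseteq G\subseteq G_\C$ and $K$ is compact. For $x\in \Omega_\p$ and $k\in K$, the element $\exp(-ix)k$ lies in $U=\langle\exp_{G_\C}(\u)\rangle$ since $-ix\in i\p\subseteq\u$, hence $s_{\rm max}^{G_\C}(\exp(-ix)k)=1$. Applying Proposition \ref{propScales}(i) to $g=\exp(-ix)k$ shows immediately that (iii) follows from (ii). For (i) one has to pass from the bound on $s_{\rm max}^{K_\C}(\kappa(g))$ provided by Proposition \ref{propScales}(ii) to a bound on $s_{\rm max}^{G_\C}(\kappa(g))$; this follows because $s_{\rm max}^{G_\C}|_{K_\C}$ is a scale function on $K_\C$ and hence $\preccurlyeq s_{\rm max}^{K_\C}$ by maximality.

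To establish (ii), I first reduce to $\Omega_\a$. Writing $x=\Ad(k_0)h$ with $k_0\in K$ and $h\in \Omega_\a$, one has $\exp(-ix)k=k_0\exp(-ih)(k_0^{-1}k)$, and since left-multiplication by $k_0\in K$ does not affect the $A$-component of the Iwasawa decomposition, $\alpha(\exp(-ix)k)=\alpha(\exp(-ih)k')$ for $k':=k_0^{-1}k$. Hence the sup over $k\in K$ is unchanged, and it suffices to bound $s_{\rm max}^{G_\C}(\alpha(\exp(-ih)k))$ uniformly in $k\in K$. Next, I parameterise $h=th_0$ with $h_0\in\del\Omega_\a$ and $t\in[0,1)$; since $\rho|_{\a}(h_0)=\pi/2$ on the boundary, one has $\pi/2-\rho(x)=(\pi/2)(1-t)$, converting a bound of order $(1-t)^{-N}$ into the asserted bound of order $(\pi/2-\rho(x))^{-N}$.

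For $a=\alpha(\exp(-ith_0)k)\in A_\C$, set $H_1:=\Re H(\exp(-ith_0)k)\in\a$ and $H_2:=\Im H(\exp(-ith_0)k)\in\a$ so that $a=\exp(H_1)\exp(iH_2)$. Since $\exp(iH_2)\in U$ and $\exp(H_1)=\exp(i\cdot(-iH_1))$ with $-iH_1\in i\a\subseteq\u$, the Cartan decomposition $G_\C=U\exp(i\u)$ together with the definition of $s_{\rm max}^{G_\C}$ gives
\[
s_{\rm max}^{G_\C}(a)=e^{\rho(H_1)},
\]
and it remains to bound $\rho(H_1)$. Since $\rho$ is a norm on $\a$, choosing the basis of fundamental restricted weights $\{\omega_1,\dots,\omega_n\}\subseteq\a^\ast$ from Definition \ref{fundweights} there exist constants $c_j>0$ with $\rho(H)\leq\sum_jc_j|\omega_j(H)|$ for all $H\in\a$. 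Applying Theorem \ref{thmasymp} to each $\mu=\pm\omega_j$ (and using continuity of $t\mapsto|\alpha(\exp(-ith_0)k)^\mu|$ on $[0,1-\varepsilon]$ to upgrade the $\limsup$-statement to a uniform bound on $[0,1)$), I obtain an integer $N_j$ such that
\[
|\alpha(\exp(-ith_0)k)^{\pm\omega_j}|\leq C_j(1-t)^{-N_j}
\]
uniformly in $(h_0,k)\in\del\Omega_\a\times K$. Since $\omega_j(H_1)=\log|\alpha(\exp(-ith_0)k)^{\omega_j}|$, this yields $|\omega_j(H_1)|\leq N_j\log\tfrac{1}{1-t}+O(1)$, hence $\rho(H_1)\leq N\log\tfrac{1}{1-t}+O(1)$ for $N:=\sum_jc_jN_j$. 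Exponentiating and converting back via $1-t=(2/\pi)(\pi/2-\rho(x))$ delivers (ii). The main obstacle is the careful identification $s_{\rm max}^{G_\C}(\exp(H_1+iH_2))=e^{\rho(H_1)}$ on $A_\C$ via the Cartan decomposition of $G_\C$ with respect to $U$, together with the equivalence of $s_{\rm max}^{G_\C}|_{K_\C}$ with $s_{\rm max}^{K_\C}$ needed for the reduction of (i) to (ii); once these bookkeeping items are handled, the bulk of the estimate is inherited from Theorem \ref{thmasymp}.
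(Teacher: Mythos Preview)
Your proof is correct and follows essentially the same route as the paper's: reduce (i) and (iii) to (ii) via Proposition~\ref{propScales} using $s_{\rm max}^{G_\C}(\exp(-ix)k)=1$, then identify $s_{\rm max}^{G_\C}$ on $A_\C$ with $e^{\rho(\Re H)}$, dominate $\rho$ by a combination of $|\omega_j|$, and invoke Theorem~\ref{thmasymp}. You spell out several steps the paper leaves implicit---the passage from $\Omega_\p$ to $\Omega_\a$ via $K$-conjugation, the radial parametrisation $h=th_0$ with $h_0\in\del\Omega_\a$ and the conversion $\tfrac{\pi}{2}-\rho(x)=\tfrac{\pi}{2}(1-t)$, the compactness argument upgrading the $\limsup$ in Theorem~\ref{thmasymp} to a uniform bound on $[0,1)$, and the use of maximality of $s_{\rm max}^{K_\C}$ to pass from Proposition~\ref{propScales}(ii) to a bound on $s_{\rm max}^{G_\C}(\kappa(g))$. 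Two minor remarks: applying Theorem~\ref{thmasymp} to $+\omega_j$ is unnecessary since Proposition~\ref{formulaalpha} already gives $|\alpha(\exp(-ith_0)k)^{\omega_j}|\le 1$, so only the $-\omega_j$ direction is needed; and your final exponent $N=\sum_j c_jN_j$ need not be an integer, but rounding up is harmless.
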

\begin{proof}
Since $\exp(-ix)k\in U$, for $x\in \Omega_\p$ and $k\in K$, one has $s_{\rm max}^{G_\C}(\exp(-ix)k)=1$. In particular, Proposition \ref{propScales} implies that there exist $C^\prime,N^\prime>0$ such that 
$$
s_{\rm max}^{G_\C}(\kappa(\exp(-ix)k)),\quad s_{\rm max}^{G_\C}(\eta(\exp(-ix)k))\leq C^\prime s_{\rm max}^{G_\C}(\alpha(\exp(-ix)k))^{N^\prime},$$
for all $x\in \Omega_\p$ and $k\in K$. Therefore, it suffices to prove (ii). Recall that
$$s_{\rm max}^{G_\C}(H_1+iH_2)=e^{r_{\rm spec}(\ad(H))},\qfor H_1,H_2\in\a.$$
For $c_i>0$ sufficiently large, one has $r_{\rm spec}(H)\leq \sum_{i=1}^n c_i|\omega_i(H)|$, for all $H\in \a$,
and thus the assertion follows from Theorem \ref{thmasymp}.
\end{proof}
\section{Applications of the growth estimates}\label{secAppl}
Let $G$ be a connected semisimple Lie group with not necessarily finite center and let $(\pi_\sigma,\cH_\sigma)$ be the principal series representation (cf. (\ref{defPrinc1})) associated to a finite-dimensional representation of $P_{\rm min}$, which is unitary when restricted to $M$. The main results of this section are the following:
\begin{itemize}
\item[(i)] For $v\in \cH_\sigma^{[K]}$, there exist $C,N>0$ such that
\begin{equation}\label{EqPrincGrowth}
\|e^{i\del\pi_\sigma(x)}v\|\leq C\left(\pitwo-r_{\rm spec}(\ad(x))\right)^{-N},
\end{equation}
for all $x\in \Omega_\p$, which proves (ii) in Theorem \ref{thm1}. 
\item[(ii)] For $\lambda\in \cH_\sigma^{-\infty}$ and $v\in \cH_\sigma^{[K]}$, there exist $C,N>0$ such that
\begin{equation}\label{EqDistrVgrowth}
\lambda(e^{i\del\pi_\sigma(x)}v)\leq C\left(\pitwo-r_{\rm spec}(\ad(x))\right)^{-N},
\end{equation}
for all $x\in \Omega_\p$. If $G$ has finite center, then this implies Theorem \ref{thm3} (cf. Remark \ref{remModGrowth}, Corollary \ref{corModGrowth}).
\item[(iii)] If $G$ has finite center, (i) and (ii) hold for arbitrary Hilbert globalizations of Harish--Chandra modules proving the remaining part of Theorem \ref{thm1}.
\end{itemize}
\subsection{Growth of Poisson transforms of distributional sections}\label{subsPoiss}
We slightly improve upon Theorem \ref{ThmMainResult} by showing that the asymptotic of the $K$-derivatives of the Iwasawa component maps in finite-dimensional representations of $G$ is also polynomial. We then apply this to Poisson transforms of distributional sections.
\begin{defin}
\rm{Let $V$ be a finite-dimensional  complex vector space and let $\{X_1,\dots,X_n\}$ be a basis of $\k$. For a multiindex $\ell\in \N_0^n$, we denote $|\ell|:=\ell_1+\dots +\ell_n$ as well as
$$(L_X^\ell f)(k):=\left.\frac{\del^{|\ell|}}{\del t_1^{\ell_1}\dots \del t_n^{\ell_n}}\right\vert_{t=0} f(\exp(-(t_1X_1+\dots+t_nX_n))k),\qfor f\in C^\infty(K,V).$$
}
\end{defin}
\begin{lem}
Let $M$ be a manifold. Consider smooth functions $f_j:M\to \C$ and an open submanifold $M_0\subseteq M$ such that $f_j(M_0)$ is contained in a simply connected subset $U\subseteq\C^\times$, which guarantees that
$$F:M_0\to \C,\quad m\mapsto \prod_{j=1}^n f_j(m)^{\mu_j},\qfor \mu_j\in \C,
$$
is a well-defined smooth function. Let $D$ be a differential operator on $M$. Then, the function $DF:M_0\to \C$ is contained in the $C^\infty(M)$-module generated by products of
$$f_j^{\mu_j-k_j}:M_0\to \C,\quad m\mapsto f_j(m)^{\mu_j-k_j},\qfor 1\leq j\leq n,\quad k_j\in \N_0.
$$
\end{lem}
\begin{proof}
This is an immediate consequence of the Leibniz rule and the chain rule since the derivatives of the functions $f_j$ are contained in $C^\infty(M)$.
\end{proof}
\begin{prop}\label{propAsympDiffA}
For $\mu \in \a^\ast$ and $\ell\in \N_0^n$, there exist $C_{\mu,\ell},N_{\mu,\ell}>0$ such that
$$\sup_{k\in K} |L_X^\ell\alpha(\exp(-ix)k)^\mu|\leq C_{\mu,\ell} \left(\pitwo -r_{\rm spec}(\ad(x))\right)^{-N_{\mu,\ell}},\qforall x\in \Omega_\p.$$
\end{prop}
\begin{proof}
We may w.l.o.g. assume that $x\in \Omega_\a$ since $\Omega_\p=\Ad(K)\Omega_\a$ and $\alpha$ is left $K$-invariant. For $\mu_j\in \R$ such that $\mu=\sum_{j=1}^r \mu_j\omega_j$, one has
\begin{equation}\label{alpha}
\alpha(\exp(-ix)k)^\mu = \prod_{j=1}^r \left(\alpha(\exp(-ix)k)^{2\omega_j}\right)^{\mu_j/2}.
\end{equation}
We apply the previous lemma to $M=\a_\C\times K$ and $M_0=(\a+i\Omega_\a)\times K$ as well as the analytic functions
$$f_j:\a_\C\times K\to \C,\quad (x,k)\mapsto \alpha(\exp(-ix)k)^{2\omega_j}=\sum_{\mu\in \cP_j}e^{-2i\mu(x)}\|v_\mu(k)\|^2
$$
(cf. Proposition \ref{formulaalpha}). In particular, the function 
$$M_0\to \C,\quad (x,k)\mapsto L_X^\ell \alpha(\exp(-ix)k)^\mu$$
is contained in the $C^\infty(M)$-module generated by products of $f_j^{\mu_j-k_j}:M_0\to \C$. Since $\del\Omega_\p \times K/Z(G)$ is compact, the coefficient functions in $C^\infty(M)$ do not contribute to the asymptotic and thus the assertion follows from Theorem \ref{ThmMainResult}.
\end{proof}
\begin{cor}\label{corAsympDiffN}
Let $(\sigma,W_\sigma)$ be a nilpotent finite-dimensional representation of $N$. For $\ell\in \N_0^n$, there exist $C_{\sigma,\ell},N_{\sigma,\ell}>0$ such that
$$\sup_{k\in K} \|L_X^\ell\sigma(\eta(\exp(-ix)k))\|\leq C_{\sigma,\ell}\left(\pitwo -r_{\rm spec}(\ad(x))\right)^{-N_{\sigma,\ell}},\qforall x\in \Omega_\p.
$$
\end{cor}
\begin{proof}
Corollary \ref{corUnip} implies that $\sigma^{v,w}\circ \eta\in \C[\KANC]$, for all $v,w\in W_\sigma$, and one has
\begin{align*}
\C[\KANC]=\left\{\frac{p}{\Delta^k}\;|\;p\in \C[G_\C],\quad k\in \N_0 \right\}
\end{align*}
as well as $\Delta(g)=\alpha(g)^\mu$, for $g\in \KANC$ and $\mu=\sum_{i=1}^r \omega_i\in \a^\ast$ (cf. (\ref{Delta})). By the same argument as before using the compactness of $\del\Omega_\p\times K/Z(G)$, the polynomials in the numerator do not contribute to the asymptotic on the boundary and thus the assertion follows immediately from the previous lemma.
\end{proof}
\begin{cor}\label{corAsympDiffK}
Let $(\tau,V_\tau)$ be a finite-dimensional unitary representation of $K$. For every $\ell\in \N_0^n$, there exist $C_{\tau,\ell},N_{\tau,\ell}>0$ such that
$$\sup_{k\in K} \|L_X^\ell\tau(\kappa(\exp(-ix)k))\|\leq C_{\tau,\ell}\left(\pitwo -r_{\rm spec}(\ad(x))\right)^{-N_{\tau,\ell}},\qforall x\in \Omega_\p.
$$
\end{cor}
\begin{proof}
We first prove the assertion for its linearization $G_\R\subseteq G_\C$ and then argue that the general case can be reduced to the linear case. If $G=G_\R$ is linear, then $$K_\R \subseteq U=\langle \exp_{G_\C}(\k\oplus i\p)\rangle$$
is a compact subgroup. Applying Frobenius reciprocity to $\Ind_{K_\R}^{U}(\tau)$, for $\tau\in \widehat{K}_\R$, shows that every irreducible unitary representation of $K_\R$ occurs as the restriction of a subrepresentation of an irreducible finite-dimensional unitary representation of $U$. Furthermore, by Weyl's Unitarian Trick, every finite-dimensional representation of $U$ extends to $G_\C$ and thus there exists a finite-dimensional representation $(\pi,V)$ of $G_\C$ which contains a $K_\R$-invariant subspace which is equivalent to the representation $(\tau,V_\tau)$ of $K_\R$. Observe that
$$\pi(\kappa(x^{-1}k))=\pi(x^{-1}k)(\pi(\alpha(x^{-1}k)\pi(\eta(x^{-1}k)))^{-1},\qfor x\in \xig\qand k\in K_\R.
$$
Therefore, Proposition \ref{propAsympDiffA} and Corollary \ref{corAsympDiffN} together with
$$\|\tau(\kappa(x^{-1}k))\|\leq \|\pi(\kappa(x^{-1}k))\|\leq \|\pi(x^{-1}k)\|\|\pi(\alpha(x^{-1}k)))^{-1}\|\|\pi(\eta(x^{-1}k))^{-1}\|
$$
imply the assertion in the linear case. We assume w.l.o.g. that $G$ is a simply connected simple Lie group and distinguish two cases. If $G$ is not hermitian, then $K$ is a semisimple compact Lie group. If $G$ is hermitian, then \cite[App. 11.A.2 p. 115]{Wa92} implies that
$$Z(K)_e\cong \R\qand Z:=\ker(\eta_G)\cong \Z.$$
In the first case, $G$ is a finite cover of a linear Lie group and thus there exists $m\in \N$ such that the symmetrized tensor product $\tau^{\vee m}$ factors through a unitary representation of $K_\R$. We now argue inductively over the degree $|\ell|=\ell_1+\dots +\ell_n$ of $\ell\in \N_0^n$. For $|\ell|=0$, the assertion follows from Theorem \ref{ThmMainResult}. The induction hypothesis allows us to ignore lower-order terms and thus we consider
$$L_X^\ell \tau^{\vee m}(\kappa(x^{-1} k))=m \left(L_X^\ell\tau(\kappa(x^{-1}k))\right)\vee \tau^{\vee (m-1)}(\kappa(x^{-1}k))+\text{lower order terms}.
$$
In particular, one obtains that
$$\|L_X^\ell\tau(\kappa(x^{-1}k))\|\lesssim \frac{1}{\|\tau^{\vee (m-1)}(\kappa(x^{-1}k))\|}\left(\|L_X^\ell \tau^{\vee m}(\kappa(x^{-1} m))\|+\text{lower order terms} \right).
$$
Since $\tau^{\vee m}$ factors through a representation of $K_\R$, the asymptotic of its derivatives reduces to the linear case. Furthermore, one has $\frac{1}{\|S\|}\leq \|S^{-1}\|$, for invertible endomorphisms and thus, the assertion follows inductively from
$$\|\tau^{\vee (m-1)}(\kappa(x^{-1}k))^{-1}\|\leq s_{\rm max}^{K_\C}(\kappa(x^{-1}k))^{m-1}.$$
For hermitian groups, $K\cong \R\times [K,K]$ and the universal complexification $\eta_G:G\to G_\C$ is injective on $[K,K]$ (cf. (1) in the proof of \cite[Lemma 11.A.2.3]{Wa92}). Any irreducible unitary representation of $\R\times [K,K]$ is a tensor product of an irreducible unitary representation of $\R$, i.e. a character, and an irreducible unitary representation of $[K,K]$. Since $\eta_G$ is injective on $[K,K]$ the asymptotic of the derivatives of irreducible unitary representations of $[K,K]$ reduces to the linear case. Therefore, it suffices to show the assertion for characters. Since one can scale characters such that they factor through characters of $K_\R$, the assertion follows by the same inductive argument as above. 
\end{proof}
\begin{rem}\label{remModGrowth}
\rm{Let $G$ be a connected semisimple Lie group and $f:G\to \C$ be a $(Z(\g),K)$-finite smooth function. Theorem \ref{ThmHolExt} implies that $f$ extends to a holomorphic function $f:\wxi\to \C$. The function $f$ is called of \textit{moderate growth} if there exists a constant $d>0$ such that
\begin{equation}\label{defModGrowth}
\sup_{g\in G} \frac{|L_{D}(f)(g)|}{\|g\|^d}<\infty,\qforall D\in \cU(\g),
\end{equation}
where $\|k\exp(X)\|:=e^{\|X\|}$, for $k\in K$, $X\in \p$ and a $K$-invariant norm on $\p$. If $G$ has finite center, then \cite[Thm. 11.9.2]{Wa92} implies that any $(Z(\g),K)$-finite smooth function $f:G\to \C$ of moderate growth has a ``Poisson integral representation'' in the sense that there exists a finite-dimensional $P_{\rm min}$-representation $(\sigma,W_\sigma)$, which is unitarizable when restricted to $M$, a distribution vector $\lambda\in \cH_\sigma^{-\infty}$ of the corresponding principal series representation $(\pi_\sigma,\cH_\sigma)$ (cf. Subsection \ref{subsHC}) and a $K$-finite vector $v\in \cH_\sigma^{[K]}$ such that
\begin{equation}\label{EqDistrSec}
f(g)=\pi^{\lambda,v}(g):=\lambda(\pi_\sigma(g)v),\qforall g\in G.
\end{equation}
The Kr\"otz--Stanton Extension Theorem for Fr\'{e}chet globalizations of moderate growth (cf. Corollary \ref{corKrSt}) implies that $$\pi_\sigma^v(\wxi)\subseteq \cH^\infty\qand \pi_\sigma^v:\wxi\to \cH^\infty$$
is a holomorphic Fr\'{e}chet space valued function. In particular, since $\lambda$ is antilinear, the antiholomorphic extension of $f=\pi^{\lambda,v}$ is given by
$$f(x)=\lambda(\pi_\sigma^v(x)),\qfor x\in \wxi.$$
We now apply the results of this section to obtain polynomial growth rates for such functions as $x$ approaches the boundary of the crown.
}
\end{rem}
\begin{prop}\label{propGrowth}
Let $(\sigma,W_\sigma)$ be a finite-dimensional representation of $P_{\rm min}$, which is unitarizable when restricted to $M$. For a $K$-finite vector $v\in \cH_\sigma^{[K]}$ and $\ell\in \N_0^n$, there exist constants $C_{\sigma,\ell},N_{\sigma,\ell}>0$ such that
\begin{equation}\label{EqGrowthPiSigma}
\sup_{k\in K} \|(L_X^\ell\pi_\sigma^v(\exp(ix)))(k)\|\leq C_{\sigma,\ell} \cdot \left(\pitwo- r_{\rm spec}(\ad(x))\right)^{-N_{\sigma,\ell}},\qforall x\in \Omega_\p.
\end{equation}
\end{prop}
\begin{proof}
The $K$-finite $M$-right equivariant function $v:K\to W_\sigma$ extends to a holomorphic function $v:\wt{K}_\C\to W_\sigma$ and
$$\pi_\sigma^{v}(\exp(ix))(k)=\sigma_\rho(\beta(\exp(-ix)k))^{-1}v(\kappa(\exp(-ix)k)),
$$
for $x\in \Omega_\p$. In view of the Leibniz rule, it suffices to prove that
$$\sup_{k\in K}\|L_X^\ell\sigma_\rho(\beta(\exp(-ix)k))^{-1}\|\qand \sup_{k\in K}\|L_X^\ell v(\kappa(\exp(-ix)k))\|
$$
satisfy estimates of the form (\ref{EqGrowthPiSigma}). Since $v$ is a $K$-finite vector, there exists a not necessarily irreducible unitary finite-dimensional representation $\tau$ of $K$ such that 
$$\|L_X^\ell v(\kappa(\exp(-ix)k))\|\leq \|L_X^\ell \tau(\kappa(\exp(-ix)k))\|,
$$ 
which can be estimated by Corollary \ref{corAsympDiffK}. The restriction of the representation $d\sigma_\rho$ to $\a_\C\oplus \n_\C$ decomposes into generalized $\a_\C$-eigenspaces, which are nilpotent $\n_\C$-modules. As an $\a_\C$-module, the generalized $\a_\C$-eigenspaces are a tensor product of a one-dimensional representation and a nilpotent representation. The contribution of the $\a_\C$-representation can be estimated by Proposition \ref{propAsympDiffA} and the nilpotent $N_\C$-representation can be estimated by Corollary \ref{corAsympDiffN}.
\end{proof}
Before we present our main result, we recall the some facts about the dual space of the smooth vectors $\cH_\sigma^\infty$ of a principal series representation.
\begin{rem}
\rm{Let $(\sigma,W_\sigma)$ be a finite-dimensional $P_{\rm min}$-representation, which is unitarizable when restricted to $M$. The dual space 
$$\left(\cH_\sigma^{\infty}\right)^\prime=\left\{\overline{\lambda}:\cH_\sigma^\infty\to \C\;|\;\lambda\in \cH_\sigma^{-\infty}\right\}$$ 
can be identified with the space of distributional sections $\Gamma^{-\infty}(\W_\sigma)$ of the associated vector bundle $\W_\sigma:=K\times_M W_\sigma$ and the space of smooth sections $\Gamma^\infty(\W_\sigma)$ of $\W_\sigma\epi K/M$ can be identified with the smooth $M$-equivariant maps $C^\infty(K,W_\sigma)^M$. The map
$$\Phi:C_c^\infty(K,W_\sigma)\to C^\infty(K,W_\sigma)^M,\quad f\mapsto f^M,\qfor f^M(k):=\int_M \sigma(m)f(km)dm
$$
is surjective and one equips $C^\infty(K,W_\sigma)^M$ with the corresponding quotient topology induced from the canonical LF-topology on $C_c^\infty(K,W_\sigma)$. Since $K/M$ is compact, the quotient topology on $C^\infty(K,W_\sigma)^M$ induced by $\Phi$ is a Fr\'{e}chet topology and is induced by the seminorms
$$p_\ell(f):=\sup_{k\in K} \|L_X^\ell f(k)\|,\qfor f\in C^\infty(K,W_\sigma)^M\qand \ell\in \N_0^n.
$$
For $m\in M$ and $k\in K$, one has that
$$\|L_X^\ell f(km)\|=\|\sigma(m)^{-1}(L_X^\ell f(k))\|=\|L_X^\ell f(k)\|$$
and since $K/M$ is compact, the seminorms $p_\ell $ are well-defined. Consider the dual space 
$$\Gamma^{-\infty}(\W_\sigma):=(\Gamma^{\infty}(\W_\sigma))^\prime$$ 
with respect to this Fr\'{e}chet topology. Its elements are called \textit{distributional sections}. The order of a distributional section $T\in \Gamma^{-\infty}(\W_\sigma)$ is the minimal $N\in \N_0$ such that
$$|T(f)|\lesssim \sum_{|\ell|\leq N} p_\ell(f),\qforall f\in \Gamma^{\infty}(\W_\sigma).
$$
If $T$ is a distributional section of order $0$, then the density of $C^\infty(K/M)\subseteq C(K/M)$ and the estimate $|T(f)|\lesssim \|f\|_\infty$ implies that $T\circ \Phi$ extends to a continuous linear functional on $C_c(K,W_\sigma)$ and thus the Riesz Representation Theorem implies that there exists a $W_\sigma^\ast$-valued  measure $\mu_T$ such that $(R_m)^\ast\mu_T=\sigma^\ast(m)^{-1}\mu_T$ and
$$T(f)=\int_{K/M}d\mu_T(k)(f(k)),\qforall f\in \Gamma(\W_\sigma).
$$
We proceed as in the proof of \cite[Thm. 24.4]{Tre67} to show that if $T$ is a distributional section of finite order that $T$ is a sum of derivatives of distributional sections of order zero. Let $T$ be a distributional section of order $N$. Let $\Gamma^N(\W_\sigma)$ be the space of $N$-times continuously differentiable sections and $\Gamma(\W_\sigma)$ be the space of continuous sections equipped with their natural Fr\'{e}chet topologies. Consider the continuous linear embedding with closed image
$$\Gamma^N(\W_\sigma)\to \prod_{|\ell|\leq N}\Gamma(\W_\sigma),\quad f\mapsto (L_X^\ell f)_{|\ell|\leq N}.
$$
Since $T$ is a distributional section of order $N$, it extends to a continuous linear functional on $\Gamma^N(\W_\sigma)$ and thus on the image of the embedding. The Hahn--Banach Extension Theorem implies that $T$ extends to a continuous linear functional on $\prod_{|\ell|\leq N}\Gamma(\W_\sigma)$ and thus there exist $W_\sigma^\ast$-valued measures $\mu_{\ell,T}$, for $|\ell|\leq N$ such that $(R_m)^\ast\mu_{\ell,T}=\sigma^\ast(m)^{-1}\mu_{\ell,T}$ and 
\begin{equation}\label{Radon}
T(f)=\sum_{|\ell|\leq N} \int_{K/M}d\mu_\ell(k)(L_X^\ell f)(k),\qforall f\in \Gamma^N(\W_\sigma).
\end{equation}
}
\end{rem}
\begin{thm}\label{thmModGrowth}
For $\lambda\in \cH_\sigma^{-\infty}$ and $v\in \cH_\sigma^{[K]}$, there exist $C,N>0$ such that
$$|\lambda(e^{i\del\pi_\sigma(x)}v)|\leq C\cdot \left(\pitwo-r_{\rm spec}(\ad(x))\right)^{-N},\qforall x\in \Omega_\p.
$$
\end{thm}
\begin{proof}
Let $N$ be the order of $\lambda\in \cH_\sigma^{-\infty}$ and let $\mu_\ell $ be $W_\sigma^\ast$-valued Radon measures such that
$$\overline{\lambda(w)}=\sum_{|\ell |\leq N} \int_{K/M}d\mu_\ell (k)(L_X^\ell w)(k),\qforall w\in \Gamma^N(\W_\sigma).
$$
In particular, for the finite measures $\|\mu_\ell \|$ on $K/M$, one has
\begin{align*}
|\lambda(e^{i\del\pi_\sigma(x)}v)|\leq \sum_{|l|\leq N} \|\mu_\ell \|(K/M) \sup_{k\in K}\|(L_X^\ell\pi_\sigma^v(x))(k)\|.
\end{align*}
Therefore, the assertion follows from Proposition \ref{propGrowth}.
\end{proof}
As a consequence of Remark \ref{remModGrowth} and the previous theorem, we obtain the following:
\begin{cor}\label{corModGrowth}
Let $G$ have finite center and let $f:G\to \C$ be a $(Z(\g),K)$-finite smooth function of moderate growth. Then, there exist $C,N>0$ such that
$$|f(\exp(ix))|\leq C\cdot \left(\pitwo-r_{\rm spec}(\ad(x))\right)^{-N},\qforall x\in \Omega_\p.
$$
\end{cor}
\begin{rem}
\rm{Let $(\pi,\cH)$ be a Hilbert globalization of a Harish--Chandra module. For $v\in \cH$, consider the kernel
$$K^{v,v}:G\times G\to \C,\quad (g_1,g_2)\mapsto \langle \pi(g_1)v,\pi(g_2)v\rangle.
$$
Then, $K^{v,v}$ is $(Z(\g\oplus \g),K\times K)$-finite. Consider the unique antihomomorphism $$\cU(\g)\to \cU(\g),\quad D\mapsto D^t,\qsuchthat x^t=-x,\qfor x\in \g.$$
For $D_1,D_2\in \cU^{\leq k}(\g)$ and $g_1,g_2\in G$, one has
$$L_{D_1,D_2}K^{v,v}(g_1,g_2)=\langle d\pi(D_1^t)\pi(g_1)v,d\pi(D_1^t)\pi(g_1)v\rangle.
$$
Since $\pi$ is a Hilbert representation, it follows from \cite[Lemma 2.A.2.2]{Wa88} that there exist $C>0$ and $d>0$ such that
$$|L_{D_1,D_2}K^{v,v}(g_1,g_2)|\leq C\|g_1\|^d\|g_2\|^d \|d\pi(\Ad(g_1)^{-1}D_1^t)v\|\|d\pi(\Ad(g_2)^{-1}D_2^t)v\|.
$$
Since $\cU^{\leq k}(\g)$ is a finite-dimensional $G$-module, it follows from the same argument as in \cite[Lemma 11.5.1]{Wa92} that $\|d\pi(\Ad(g_i)^{-1}D_i^t)v\|$ is bounded by a constant multiple of $\|g_i\|^{d_i}$ and thus $K^{v,v}$ has moderate growth for the connected semisimple Lie group $G\times G$. 
}
\end{rem}
\begin{thm}\label{thmFinCenter}
Let $G$ have finite center and let $(\pi,\cH)$ be a Hilbert globalization of a Harish--Chandra module. For $v\in \cH^{[K]}$, there exist $C,N>0$ such that
$$\|e^{i\del\pi(x)}v\|\leq C\left(\pitwo-r_{\rm spec}(\ad(x))\right)^{-N},\qforall x\in \Omega_\p.
$$
\end{thm}
\begin{proof}
The kernel $K^{v,v}$ is $(Z(\g\oplus \g),K\times K)$-finite and extends to a holomorphic function $K^{v,v}:\wxi\times \wxi\to \C$ by Proposition \ref{PropMatrixCoeff}. Furthermore, $K^{v,v}$ is of moderate growth for the semisimple Lie group $G\times G$ by the previous remark. For $x_0\in \del\Omega_\p$, the functions
$\cS_{\pm 1}\to \C$ defined as
$$z\mapsto \langle e^{-\overline{z}\del\pi(x_0)}v,e^{z\del\pi(x_0)}v\rangle\qand z\mapsto K^{v,v}(\exp(-zx),\exp(zx))
$$
are holomorphic and coincide on $\R$. Therefore, they coincide on $\cS_{\pm 1}$, which implies
$$\|e^{i\del\pi(x)}v\|^2=K^{v,v}(\exp(-ix),\exp(ix))
,\qforall x\in \Omega_\p.$$
Remark \ref{remModGrowth} implies that $K^{v,v}$ is a Poisson transform of a distributional section in the sense of (\ref{EqDistrSec}) and thus the assertion follows from Theorem \ref{thmModGrowth}.
\end{proof}
We now discuss how to transfer these growth estimates to growth estimates of derivatives.
\begin{lem}\label{lemKNstatement}
Let $\cH$ be a Hilbert space, $\D:=\{z\in \C\;|\;|z|<1\}$ and $N>0$. Suppose $f:\D\to \cH$ is holomorphic and $$\|f(z)\|\leq (1-|z|)^{-N},\qforall z\in \D.$$
Then, there exists $C>0$ independent of $f$ such that
$$\frac{1}{k!}\left\|f^{(k)}(0)\right\|\leq C(1+k)^N\quad\text{as}\quad k\to \infty.
$$
\end{lem}
\begin{proof}
For $r\in (0,1)$, Cauchy's Integral Formula implies
$$\frac{1}{k!}\left\|f^{(k)}(0)\right\|\leq  \int_0^{2\pi} \frac{\|f(re^{i\theta})\|}{r^{k}}\frac{d\theta}{2\pi} \leq F_k(r):=(1-r)^{-N} r^{-k}.
$$
Minimizing the right hand side leads to the choice $r_k:=\frac{k}{N+k}\in (0,1)$ and one has
\begin{align*}
F_k(r_k)=&\left(1-\tfrac{k}{N+k}\right)^{-N}\left(\tfrac{k}{N+k}\right)^{-k}=\left(\tfrac{N}{N+k}\right)^{-N}\left(\tfrac{k}{N+k}\right)^{-k}=\left(1+\tfrac{k}{N}\right)^N\left(1+\tfrac{N}{k}\right)^k.
\end{align*}
Therefore, $\left(1+\frac{k}{N}\right)^N\leq (1+k)^N$ and $\left(1+\frac{N}{k}\right)^k\to e^N$ imply the assertion.
\end{proof}
As an immediate consequence of the previous lemma, Theorem \ref{thmModGrowth} and Theorem \ref{thmFinCenter}, we obtain the following theorem:
\begin{thm}\label{thmNormGrowth}
Let $G$ have finite center and let $(\pi,\cH)$ be a Hilbert globalization of a Harish--Chandra module. 
\begin{itemize}
\item[{\rm (i)}] Let $v\in \cH^{[K]}$. Then, there exist $C,N>0$ such that
$$\frac{1}{k!}\|d\pi(x_0)^kv\|\leq C(1+k)^N,\qforall x_0\in \del\Omega_\p.
$$
\item[{\rm (ii)}] Let $\lambda\in \cH^{-\infty}$ and $v\in \cH^{[K]}$. Then, there exist $C,N>0$ such that
$$\frac{1}{k!}\left|\lambda(d\pi(x_0)^kv)\right|\leq C(1+k)^N,\qforall x_0\in \del\Omega_\p.
$$
\end{itemize}
\end{thm}
\subsection{Dependence of the constants on the representation}
We now prove estimates of the form
$$\|e^{i\del\pi_{\sigma,\mu}(x)}v\|\leq C \left(\pitwo-r_{\rm spec}(\ad(x))\right)^{-N},\qforall x\in \Omega_\p,
$$
for principal series representations and explicitly determine how the constant $C>0$ and the exponent $N>0$ depends on the representation $[\sigma]\in \widehat{M}$, the functional $\mu\in \a_\C^\ast$ and the $K$-type $[\tau]\in \widehat{K}$ corresponding to the $K$-finite vector $v\in \cH_{\sigma,\mu}^{[K]}$. We apply this in Subsection \ref{subsTemp} to direct integrals of irreducible unitary representations, for which the dependency of these estimates on the exponent is relevant.
\begin{prop}
Let $(\tau,V_\tau)$ be an irreducible unitary representation of $K$, let $\t\subseteq \k$ be a maximal torus and choose a $\cW_\k$-invariant norm $|\cdot |$ on $i\t^\ast$. Then there exists $c>0$ independent of $\tau$ such that
$$\|\tau(k)\|\leq  s_{\rm max}^{G_\C}(k)^{c|\tau|},\qforall k\in K_\C.
$$
\end{prop}
\begin{proof}
Let $\t\subseteq \k$ be a maximal torus. For $X\in \k$, choose $k_0\in K$ for which $X_0:=\Ad(k_0)X$ is contained in $\t$. In particular,
$$\|\tau(k)e^{d\tau(iX)}\|=\|e^{id\tau(X_0)}\|=\max_{\lambda\in \cP_\tau} e^{i\lambda(X_0)},\qfor k\in K,
$$
where $\cP_\tau\subseteq i\t^\ast$ is the set of $\t$-weights of $\tau$. Recall that $\cP_\tau\subseteq \overline{\text{conv}(\cW_\k.\lambda_\tau)}$, for the highest weight $\lambda_\tau\in \cP_\tau$ w.r.t. some positive system $\Delta^+\subseteq \Delta(\k_\C,\t_\C)$. In particular, for any choice of norm $\|\cdot\|$ on $i\t^\ast$ and $|\tau|:=\|\lambda_\tau\|$, it follows that
$$\max_{\lambda\in \cP_\tau}|i\lambda(X_0)|\leq c|\tau|\rho(iX_0),\qfor \rho(iX_0):=r_{\rm spec}(\ad(iX_0)).
$$
The constant $c>0$ is independent of $\tau$ and only depends on the choice of the norm $|\cdot |$ on $i\t^\ast$. Therefore the assertion follows from
$s_{\rm max}^{G_\C}(k\exp(iX))=e^{\rho(iX)}$, for $k\in K$ and $X\in \k$.
\end{proof}
\begin{cor}\label{corKgrowth}
There exists $C_0>0$ such that for all $$[\sigma]\in \widehat{M},\quad [\tau]\in\widehat{K}\qand f\in \cH_{\sigma,[\tau]}^{[K]},$$ 
where $\Ind_M^K(\sigma)=(\pi_\sigma,\cH_\sigma)$ is the corresponding induced representation, one has
$$\|f(k)\|\leq C_0\cdot d_\tau^3\cdot s_{\rm max}^{K_\C}(k)^{|\tau|}\cdot \|f\|,\qforall k\in \wt{K}_\C.
$$
\end{cor}
\begin{proof}
By Frobenius reciprocity, all irreducible unitary representations of $K$ have finite multiplicity in $\cH_\sigma= L^2(K\times_\sigma W)$. We choose a Haar measure on $K/Z(G)$ such that
$$\int_{K/Z(G)}|\chi_\tau(k)|^2dk=1,\qforall [\tau]\in \widehat{K}\qand \chi_\tau(k):=\tr(\tau(k)).
$$
For $[(\tau,V_\tau)]\in \widehat{K}$, consider the map $\Phi_{[\tau]}:V_\tau\otimes \Hom_M(V_\tau,W)\to \cH_{\sigma,[\tau]},\;v\otimes T\mapsto f_{T,v}$, where
\begin{equation}\label{Frob}
f_{T,v}:K\to W,\quad k\mapsto T(\tau(k)^{-1}v),\qfor T\in \Hom_M(V_\tau,W)\qand v\in V_\tau.
\end{equation}
Frobenius reciprocity also implies that $\Phi_{[\tau]}$ is a linear isomorphism. In particular, for $d_\tau:=\dim_\C V_\tau$, the intertwining operator $A_\tau\in \End_K(V_\tau)=\C\mathds{1}$ defined as 
$$A_\tau:=\int_{K/Z(G)}\chi_\tau(k)\tau(k)^{-1}dk\quad\text{satisfies}\quad \tr(A_\tau)=\int_{K/Z(G)}|\chi_\tau(k)|^2dk=1.
$$
Thus, $d_\tau A_\tau=\mathds{1}$ combined with (\ref{Frob}) implies that
$$f(e)=d_\tau\int_{K/Z(G)} \chi_\tau(k)f(k)dk,\qforall f\in \cH_{\sigma,[\tau]}.
$$
Let $\{w_1,\dots,w_n\}\subseteq W$ be an orthonormal basis and define
$$f_i:K\to W,\quad f_i(k):=d_\tau\int_{M/Z(G)}\overline{\chi_\tau(km)} \sigma(m)w_idm.
$$
Then, $f_i(km)=\sigma(m)^{-1}f_i(k)$ implies $f_i\in \cH_\sigma$ and $\|\chi_\tau\|_\infty\leq d_\tau$ implies $\|f_i\|_\infty\leq d_\tau^2$. In particular,
\begin{align*}
&\sum_{i=1}^n \int_{K/M} \langle f_i(k),f(k)\rangle dkM\cdot w_i \\
=&\sum_{i=1}^n \int_{K/M}\int_{M/Z(G)} \left\langle \overline{\chi_\tau(km)} \sigma(m)w_i,f(k) \right\rangle dmdkM\cdot w_i\\
=&\sum_{i=1}^n \int_{K/M}\int_{M/Z(G)} \left\langle w_i,\chi_\tau(km)f(km) \right\rangle dmdkM\cdot w_i\\
=&\sum_{i=1}^n \langle w_i,f(e)\rangle\cdot w_i=f(e).
\end{align*}
Since $W$ occurs as a subrepresentations of $V_\tau$, one has $n=\dim_\C W\leq d_\tau$. In particular, $$\|f_i\|\leq \|f_i\|_\infty\leq d_\tau^2\qand
\|f(k)\|\leq \sum_{i=1}^n \|\pi_\sigma(k)f_i\|\|w_i\| \|f\|\leq d_\tau^3 \|\tau(k)\|\|f\|$$
combined the previous proposition imply the assertion with an appropriate choice for $|\cdot|$.
\end{proof}
Let $|\nu|:=\sup_{H\in \Omega_\a}|\nu(H)|$, for $\nu \in \a^\ast$.
\begin{thm}\label{thmPrinc}
Let $G$ be a connected semisimple Lie group. There exist $C_0,N_1,N_2>0$ with the following property. For every irreducible unitary representation $(\sigma_0,W)$ of $M$, 
$$\mu\in \a_\C^\ast\qand\sigma:=\sigma_0\times \mu\times 1:P_{\rm min}\to \GL(W),\quad man\mapsto \sigma_0(m)a^\mu$$
as well as $[\tau]\in \widehat{K}$, then one has
$$\|e^{i\del\pi_\sigma(x)}v\|_{\cH_\sigma}\leq C_0\cdot d_\tau^3\cdot e^{|\Im(\mu)|}\cdot \left(\pitwo-r_{\rm spec}(\ad(x))\right)^{-(N_1\cdot |\Re(\mu)+\rho|+N_2\cdot |\tau|)},
$$
for all $x\in \Omega_\p$ and and $v\in \cH_{\sigma,[\tau]}$ with $\|v\|_{\cH_\sigma}=1$.
\end{thm}
\begin{proof}
We may w.l.o.g. assume that $x\in \Omega_\a$. It follows from $\mu_{K/M}(K/M)=1$ that
\begin{align*}
\|e^{i\del\pi_\sigma(x)}v\|_{\cH_\sigma}^2=&\int_{K/M}\left|\alpha(\exp(-ix)k)^{-(\mu+\rho)}\right|^2\|v(\kappa(\exp(-ix)k))\|^2dkM\\
\leq &\sup_{k\in K}\left|\alpha(\exp(-ix)k)^{-(\mu+\rho)}\right|^2\cdot \left(\sup_{k\in K} \|v(\kappa(\exp(-ix)k))\|^2\right).
\end{align*}
Corollary \ref{corKgrowth} and Theorem \ref{ThmMainResult} imply that, for $C_0,N_2>0$ sufficiently large, one has
\begin{align*}
\sup_{k\in K} \|v(\kappa(\exp(-ix)k))\|\leq C_0\cdot d_\tau^3\cdot  \left(\pitwo-r_{\rm spec}(\ad(x))\right)^{-N_2\cdot |\tau|}.
\end{align*}
The Complex Convexity Theorem (cf. \cite[Thm. 3.1]{GK02b}) implies that
$$\Im (H(\exp(-ix)k))\in -\cW_\a.x\subseteq \Omega_\a,\qforall x\in \Omega_\a\qand k\in K.
$$
In particular, $|\alpha(\exp(-ix)k)^{-i\Im(\mu)}|^2\leq e^{|\Im(\mu)|}$ follows. Hence, Theorem \ref{thmasymp} implies that
$$\sup_{k\in K}\left|\alpha(\exp(-ix)k)^{-(\nu+\rho)}\right|^2\leq C_1\left(\pitwo-r_{\rm spec}(\ad(x))\right)^{-N_1|\nu+\rho|},\qforall \nu \in \a^\ast
$$
and $C_1,N_1>0$ sufficiently large. Therefore, the assertion follows from 
\begin{align*}
\left|\alpha(\exp(-ix)k)^{-(\mu+\rho)}\right|^2\leq e^{|\Im(\mu)|} \left|\alpha(\exp(-ix)k)^{-(\Re(\mu)+\rho)}\right|^2.\tag*{\qedhere}
\end{align*}
\end{proof}
\section{Distributional limits at the boundary of the crown}
In Subsection \ref{subsecSlow}, we discuss polynomial growth rates of analytic vectors in locally convex vector spaces and the existence of the corresponding limits in the space of distribution vectors. Afterward, in Subsection \ref{subsTemp}, we prove that the polynomial growth rates for principal series representations can be quantitatively transferred to irreducible unitary representations. We use these results to prove that the Kr\"otz--Stanton Extension Theorem holds for reducible unitary representations on a dense subspace of the $K$-finite vectors on which one also has polynomial growth rates at the boundary.
\subsection{Analytic vectors of slow growth}\label{subsecSlow}
In this subsection, we consider strongly continuous one-parameter groups on topological vector spaces and holomorphic extensions of orbit maps of weakly analytic vectors. First, we discuss the special case of Banach representations, which is the motivation for this section. Fix a Banach space $X$ and a strongly continuous representation $\pi:\R\to B(X)^\times$. The following definition is motivated by the definition for holomorphic functions on CR manifolds that grow at most polynomially as they approach an edge (cf. \cite[Section \S 7.2]{BER99}). We will elaborate on this further in the following subsection.
\begin{defin}
\rm{Let $(\pi,X)$ be a Banach representation of $\R$. Suppose $v\in X$ is an analytic vector of $\pi$, for which the orbit map $$\pi^v:\R\to X,\quad t\mapsto \pi(t)v$$ extends to a holomorphic map on the strip $\cS_{\pm \beta}$, for $\beta>0$. We denote this set of such vectors by $X_\beta^\omega$. We say that $v\in X_\beta^\omega$ has \textit{slow growth} on $\cS_{\pm \beta}$ if there exist $C> 0$ and $N\geq 0$ such that 
$$\|\pi^v(it)\|\leq C\cdot (\beta-|t|)^{-N},\qforall |t|<\beta.
$$
In this case, we say that a vector $v$ of slow growth \textit{is of order} $\leq N$.
}
\end{defin}
\begin{rem}
\rm{For $N^\prime\geq N$, one has that $(\beta-|t|)^{N^\prime-N}$ is bounded, for $|t|<\beta$. Therefore, for any vector $v\in X$ of slow growth with order $N$ and fixed $N^\prime\geq N$, there exists $C^\prime>0$ such that
$$\|\pi^v(it)\|\leq C^\prime\cdot (\beta-|t|)^{-N^\prime},\qforall |t|<\beta.
$$
}
\end{rem}
\begin{lem}\label{lemdiff}
If $v\in X$ is a vector of slow growth of order $\leq N$. Then $\left.\tfrac{d^k}{dx^k}\right\vert_{x=0}\pi(x)v$ is a vector of slow growth of order $\leq N+k$.
\end{lem}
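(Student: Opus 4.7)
The plan is to interpret $\frac{d^k}{dx^k}\pi(x)v$ as the vector $A^k v$, where $A = \del\pi(1)$ is the infinitesimal generator of $\pi$, and to derive the growth estimate for its orbit map directly from Cauchy's integral formula applied to the holomorphic extension $F := \pi^v \colon \cS_{\pm\beta} \to X$.

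First, I would verify that $F^{(k)}$ (the $k$-th complex derivative of $F$) coincides with the holomorphic extension of the orbit map of $A^k v$. Since $v \in X_\beta^\omega$ is in particular an analytic vector for $\pi$, one has $v \in \bigcap_{j \geq 0}\cD(A^j)$, and differentiating $F(t)=\pi(t)v$ along the real axis yields $F^{(k)}(t) = \pi(t) A^k v = \pi^{A^k v}(t)$. Uniqueness of analytic continuation then gives $F^{(k)} = \pi^{A^k v}$ throughout $\cS_{\pm\beta}$, showing at least that $A^k v \in X_\beta^\omega$.

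Next, I would estimate $\|F^{(k)}(it)\|$ for $|t|<\beta$. Setting $r := (\beta - |t|)/2$, the closed disc $\overline{D(it,r)}$ lies inside $\cS_{\pm\beta}$, and for $z$ in this disc one has $|\Re z|\leq r$ and $\beta - |\Im z| \geq r$. To control $\|F(z)\|$ on the disc, I would use the identity $F(z) = \pi(\Re z)F(i\Im z)$, which holds on $\R + i\R$ by uniqueness of analytic continuation applied first in the real direction and then in the imaginary direction. Combining the standard growth estimate $\|\pi(s)\| \leq M e^{\alpha|s|}$ for a strongly continuous one-parameter group on a Banach space with the slow-growth hypothesis $\|F(i\Im z)\|\leq C(\beta - |\Im z|)^{-N}$ yields
\begin{equation*}
\|F(z)\| \;\leq\; M e^{\alpha r} C r^{-N} \;\leq\; M C e^{\alpha\beta/2} r^{-N}
\end{equation*}
uniformly for $z \in \overline{D(it,r)}$. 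Cauchy's integral formula then gives
\begin{equation*}
\|F^{(k)}(it)\| \;\leq\; \frac{k!}{r^k}\sup_{|z-it|=r}\|F(z)\| \;\leq\; \frac{k!\,M C e^{\alpha\beta/2}\cdot 2^{N+k}}{(\beta - |t|)^{N+k}},
\end{equation*}
which is the required slow-growth bound of order $N+k$.

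The main obstacle is a small foundational point rather than a difficult estimate: one needs to ensure that the cocycle-type identity $F(z) = \pi(\Re z)F(i\Im z)$ propagates from the real axis to all of $\cS_{\pm\beta}$. This is achieved by fixing $\Im z$ and using analytic continuation of both sides in $\Re z$, then fixing $\Re z$ and continuing in $\Im z$. Once this is in place, the proof reduces to a routine Cauchy estimate on a disc whose radius is chosen proportional to the distance $\beta - |t|$ from the boundary, which automatically produces the extra factor of $r^{-k}$ that accounts for the $k$ additional units in the order of slow growth.
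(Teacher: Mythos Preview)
Your argument is correct and is essentially the paper's own: both derive the order-$(N+k)$ bound from Cauchy's inequality applied to the holomorphic orbit map on a disc whose radius is proportional to $\beta-|t|$. The only differences are cosmetic---you apply the vector-valued Cauchy estimate directly with the fixed radius $r=(\beta-|t|)/2$ and make the identity $\pi^v(x+is)=\pi(x)\pi^v(is)$ (together with the exponential bound on $\|\pi(x)\|$) explicit, whereas the paper pairs with $\eta\in X'$, takes radius $r_\varepsilon=\beta-t-\varepsilon$, and lets $\varepsilon\to 0$.
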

\begin{proof}
The Cauchy-Riemann equations imply that

$$\left\|\left.\frac{d^k}{dx^k}\right\vert_{x=0}\pi^v(x+it)\right\|=\left\|\frac{d^k}{dt^k}\pi^v(it)\right\|.
$$
For $\eta\in X^\prime$, the function 
$$f_\eta:\cS_{\pm \beta}\to \C,\quad x+it\mapsto \eta(\pi^v(x+it))$$
is holomorphic and thus, for $0\leq t<\beta$, $0<\varepsilon<\beta-t-\varepsilon$ and $r_\varepsilon:=\beta-t-\varepsilon$, one obtains
\begin{align*}
\left| \frac{d^k}{dt^k}\eta\left(\pi^v(it)\right)\right|= \left|\frac{k!}{2\pi i}\int_{|z-it|=r_\varepsilon}\frac{f_\eta(z)}{(z-it)^{k+1}}dz\right|\leq \frac{C\cdot\|\eta\|\cdot k!}{(\beta-t-\varepsilon)^{k}} (\beta-t)^{-N}.
\end{align*}
Letting $\varepsilon$ approach zero and using similar arguments, the assertion follows from
\begin{align*}
\left\|\frac{d^k}{dt^k}\pi^v(it)\right\|\leq C\cdot  k!\cdot (\beta-|t|)^{-(N+k)},\qforall |t|<\beta.\tag*{\qedhere}
\end{align*}
\end{proof}
\begin{defin}
\rm{Let $V$ be a locally convex topological vector space over $\C$ and let $(p_j)_{j\in J}$ be a family of continuous seminorms on $V$ defining the topology. For a strongly continuous one-parameter group $\pi:\R\to \GL(V)$, we define on the space of smooth vectors $V^\infty$ the locally convex topology defined by the seminorms
$$p_{j,n}:V^\infty\to [0,\infty),\quad v\mapsto p_j(\pi(\del_x)^nv),\qfor j\in J \qand n\in \N_0.
$$
We say that an analytic vector $v\in V$ has \textit{slow growth} on $\cS_{\pm \beta}$, if $\pi^v:\R\to V$ extends to a holomorphic map $\pi^v:\cS_{\pm \beta}\to V$ such that for all $j\in J$, $n\in \N_0$ there exist $C>0$ and $N\geq 0$ such that
$$p_{j,n}(\pi^v(it))\leq C\cdot (\beta-|t|)^{-N},\qforall |t|<\beta.
$$
}
\end{defin}
\begin{prop}
Let $\pi:\R \to B(X)^\times$ be a strongly continuous Banach representation and $\beta>0$. Then $X_\beta^\omega\subseteq X^\infty$ and the concepts of slow growth for $X$ and $X^\infty$ coincide.
\end{prop}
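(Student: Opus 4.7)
The plan is to handle the two assertions in sequence. Let $A := \del\pi(1)$ denote the infinitesimal generator of $\pi$.

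For the inclusion $X_\beta^\omega \subseteq X^\infty$, the first step is to establish the cocycle identity
$$\pi^v(z+s) = \pi(s)\pi^v(z), \qquad z \in \cS_{\pm\beta},\; s \in \R,$$
by observing that both sides are holomorphic in $z$ (the right-hand side because $\pi(s) \in B(X)$) and coincide for $z \in \R$ via the group law of $\pi$, so the identity theorem forces equality. Differentiating this identity at $s=0$ shows $\pi^v(z) \in \cD(A)$ with $A\pi^v(z) = (\pi^v)'(z)$, and an induction on $n$ (applying the same argument to the $X$-valued holomorphic function $(\pi^v)^{(n-1)}$) yields $\pi^v(z) \in \cD(A^n)$ and $A^n\pi^v(z) = (\pi^v)^{(n)}(z)$ for all $n \in \N_0$. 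Specialising to $z = 0$ gives $v \in \bigcap_n \cD(A^n) = X^\infty$.

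For the equivalence of the two slow-growth conditions, since $X$ is a Banach space the topology on $X^\infty$ is generated by the countable family of seminorms $p_n(w) := \|A^n w\|$, $n \in \N_0$. The implication ``slow growth on $X^\infty$'' $\Rightarrow$ ``slow growth on $X$'' is immediate from $p_0 = \|\cdot\|$. For the converse, given $v$ of slow growth of order $N$ on $X$, the identity $A^n\pi^v(it) = (\pi^v)^{(n)}(it)$ from the first step combined with the Cauchy integral estimate already executed in the proof of Lemma \ref{lemdiff} gives
$$p_n(\pi^v(it)) = \|(\pi^v)^{(n)}(it)\| \le C \cdot n! \cdot (\beta - |t|)^{-(N+n)}, \qquad |t| < \beta,$$
which is precisely the desired estimate in each seminorm $p_n$.

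The most delicate point I expect is verifying that $\pi^v : \cS_{\pm\beta} \to X^\infty$ is holomorphic as a map into the Fr\'echet space $X^\infty$, not merely into $X$ (this is implicit in the $X^\infty$-slow-growth definition). I would check that $h^{-1}(\pi^v(z+h) - \pi^v(z)) \to A\pi^v(z)$ in each seminorm $p_n$: the $p_n$-error equals
$$\Bigl\|\tfrac{(\pi^v)^{(n)}(z+h) - (\pi^v)^{(n)}(z)}{h} - (\pi^v)^{(n+1)}(z)\Bigr\|,$$
which tends to $0$ as $h \to 0$ by the $X$-valued holomorphicity of each derivative $(\pi^v)^{(n)}$ established in Step~1, while the growth bounds above supply the local boundedness in each $p_n$.
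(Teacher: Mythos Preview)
Your proof is correct and follows the same route as the paper, which simply says ``This follows immediately from Lemma~\ref{lemdiff}.'' You have unpacked this one-line argument carefully: the key estimate $\|(\pi^v)^{(n)}(it)\| \le C\,n!\,(\beta-|t|)^{-(N+n)}$ is exactly the content of Lemma~\ref{lemdiff}, and your additional verification of the inclusion $X_\beta^\omega \subseteq X^\infty$ via the cocycle identity and of the Fr\'echet-valued holomorphicity of $\pi^v$ supplies details the paper leaves implicit.
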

\begin{proof}
This follows immediately from Lemma \ref{lemdiff}.
\end{proof}
\begin{rem}
\rm{We now discuss the generalization to arbitrary topological vector spaces $V$. Recall that a subset $B\subseteq V$ is called bounded if and only if for all zero neighbourhoods $U$ in $V$ there exists $r>0$ such that $B\subseteq rU$. We denote by $\cB$ the collection of all bounded sets and define on the continuous dual $V^\prime$ the strong topology as the locally convex topology induced by the seminorms
$$|\lambda|_B:=\sup_{v\in B}|\lambda(v)|,\qfor B\in \cB\qand \lambda\in V^\prime.
$$
We denote $V^\prime$ endowed with the strong topology by $V_{\rm str}^\prime$ and consider the space of smooth vectors $(V_{\rm str}^\prime)^\infty$ for the dual representation
$$\pi^\ast:\R\to \GL(V^\prime),\quad  (\pi^\ast(t)\lambda)(v):=\lambda(\pi(-t)v),\qfor \lambda \in V^\prime\qand v\in V.
$$
On $(V_{\rm str}^\prime)^\infty$, we define the family of seminorms
$$|\lambda|_{B,n}:=|\pi^\ast(\del_x)^n\lambda|_B,\qfor \lambda\in (V_{\rm str}^\prime)^\infty, n\in \N_0\qand B\in \cB
$$
and denote with $(V_{\rm str}^\prime)^{-\infty}$ the continuous dual space of $(V_{\rm str}^\prime)^\infty$. Note that the evaluation maps $\ev_x:V\to (V_{\rm str}^\prime)^{-\infty}$ are continuous linear maps since $|\text{ev}_x(\lambda)|=|\lambda|_{\{x\}}$ and $\{x\}\in \cB$. 
}
\end{rem}
\begin{rem}
\rm{If $(\pi,\cH)$ is a Hilbert representation of $\R$, then by the Riesz-Fr\'{e}chet representation theorem $$\Phi:\cH\to \cH^\prime,\quad v \mapsto \langle v,\cdot \rangle$$ is an antilinear isomorphism. A subset $B\subseteq \cH$ is bounded if and only if there exists $r>0$ such that $B\subseteq B_r(0)$. Thus, the strong topology on $\cH^\prime$ coincides with the topology induced by $\Phi$ and $(\cH_{\rm str}^\prime)^\infty$ is identified with the smooth vectors $\cH^\infty$, which consists of $v\in \cH$ for which the orbit map $\pi^v:\R\to \cH$ is smooth and carries the topology induced by the seminorms
$$p_n:\cH^\infty\to [0,\infty),\quad p_n(v):=\left\|d\pi(\del_x)^n v\right\|.
$$
In particular, $(\cH_{\rm str}^\prime)^{-\infty}$ is identified with the space of distribution vectors $\cH^{-\infty}$.
}
\end{rem}
\begin{defin}
\rm{Let $\pi:\R\to \GL(V)$ be a strongly continuous representation on a complex topological vector space $V$. 
\begin{itemize}
\item[(i)] We call a vector $v\in V$ a \textit{weakly analytic vector}, if there exists $\varepsilon>0$ such that the orbit map $\pi^v:\R\to V,\; t\mapsto \pi(t)v$ extends to a weakly holomorphic map $\cS_{\pm \varepsilon}\to V$. We denote the space of weakly analytic vectors with orbit maps that extend to a holomorphic map on $\cS_{\pm \beta}$ by $V_\beta^\omega$.
\item[(ii)] We say that a weakly analytic vector $v\in V_\beta^\omega$ has \textit{slow growth} on $\cS_{\pm \beta}$ if there exist $N\geq 0$ and a bounded subset $B\subseteq V$ such that
$$(\beta-|t|)^N\pi^v(it)\in B,\qforall |t|<\beta.
$$
\end{itemize}
}
\end{defin}
\begin{rem}
\rm{Note that if $v\in V_\beta^\omega$ and $B\subseteq V$ is a bounded subset as above, then $\pi^v:\cS_{\pm \beta}\to V$ is locally bounded since $\R$ is locally compact and $\pi(C)B$ is bounded, for $C\subseteq \R$ compact. In particular, if $z\in \cS_{\pm \beta}$ and $|\Im(z)|<\varepsilon<\beta$, then $$\pi^v(C+i(-\varepsilon,\varepsilon))\subseteq (\beta-\varepsilon)^{-N}\pi(C)B.$$
In particular, if $V$ is a sequentially complete locally convex vector space, then a weakly analytic vector of slow growth is analytic (cf. \cite[Prop. A.III.2]{Ne00}). We choose to consider weakly analytic vectors as weak analyticity is what is essential to us and to avoid specifying different notions of holomorphy. Furthermore, since $V^\prime$ separates the points, $V\hookto (V_{\rm str}^\prime)^{-\infty}$ is an embedding.
}
\end{rem}
\begin{lem}\label{lemsmoothext}
{\rm (cf. \cite[Lemma 7.2.12]{BER99})} Let $n\in \N_0$ and $F:X\times[0,\beta)\to \C$ be a function such that 
$$F_x:[0,\beta)\to \C,\quad t\mapsto F(x,t)$$
is a $C^{n+1}$-function, for all $x\in X$. If there exists $C:X\to (0,\infty)$ such that 
$$|F_x^{(n+1)}(t)|\leq C(x)\cdot(\beta-t)^{-n},\qforall x\in X,
$$
then $F_x$ extends to a continuous function $F_x:[0,\beta]\to \C$, for every $x\in X$, and there exists $c_0>0$ such that
$$|F_x(\beta)|\leq c_0\cdot C(x),\qforall x\in X.
$$
\end{lem}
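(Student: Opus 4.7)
The plan is to apply Taylor's theorem with integral remainder, which is the standard tool for converting control on the highest derivative $F_x^{(n+1)}$ into boundary regularity of $F_x$ itself. The key observation that overcomes the fact that the bound $|F_x^{(n+1)}(t)|\leq C(x)(\beta-t)^{-n}$ is not integrable up to $\beta$ when $n\geq 1$ is the elementary inequality
\[ (t-s)^n(\beta-s)^{-n}\leq 1,\quad 0\leq s\leq t<\beta, \]
which expresses that the vanishing factor $(t-s)^n$ appearing in Taylor's remainder exactly compensates the singularity of the hypothesis.

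To establish the continuous extension, I would fix an auxiliary point $t_0\in(0,\beta)$ and write, for $t\in[t_0,\beta)$,
\[ F_x(t)=\sum_{k=0}^n\frac{(t-t_0)^k}{k!}F_x^{(k)}(t_0)+\frac{1}{n!}\int_{t_0}^t(t-s)^nF_x^{(n+1)}(s)\,ds. \]
The key inequality combined with the hypothesis dominates the integrand of the remainder by $C(x)/n!$ uniformly in $s\in[t_0,\beta]$, so dominated convergence gives
\[ \lim_{t\to\beta^-}\int_{t_0}^t(t-s)^nF_x^{(n+1)}(s)\,ds=\int_{t_0}^\beta(\beta-s)^nF_x^{(n+1)}(s)\,ds\in\C, \]
and together with the convergent polynomial prefactor this exhibits $F_x(\beta^-)\in\C$, proving the continuous extension of $F_x$ to $[0,\beta]$.

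The main obstacle is the quantitative estimate $|F_x(0)|\leq c_0\cdot C(x)$. I would derive this by applying Taylor's theorem in the reverse direction, now centered at $0$ and evaluated at the boundary value just constructed, yielding the exact identity
\[ F_x(0)=F_x(\beta)-\sum_{k=1}^n\frac{\beta^k}{k!}F_x^{(k)}(0)-\frac{1}{n!}\int_0^\beta(\beta-s)^nF_x^{(n+1)}(s)\,ds, \]
where the remainder integral is bounded by $C(x)\beta/n!$ via the same compensation trick. Reducing the remaining boundary contributions $F_x(\beta)$ and $F_x^{(k)}(0)$ to multiples of $C(x)$ would proceed by iterating the Taylor-remainder argument on $F_x^{(k)}$ for decreasing $k$, using the successively derived bounds $|F_x^{(n+1-j)}(t)|\lesssim C(x)(\beta-t)^{-(n-j)}$ obtained by repeated integration of the hypothesis. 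Tracking how the implicit reference-point contributions enter at each step, and ensuring that the final constant depends only on $\beta$ and $n$, is the part of the argument that I anticipate will require the most bookkeeping.
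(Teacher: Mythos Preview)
Your continuous-extension argument is correct and cleaner than the paper's: the paper writes the Taylor remainder as an iterated integral $\int_0^{t_0}dt_1\cdots\int_0^{t_n}dt_{n+1}\,F_x^{(n+1)}(t_{n+1})$ and observes that the $n$-fold inner integral of $(\beta-t)^{-n}$ produces only a logarithmic singularity, which is still integrable on $[0,\beta]$; your inequality $(t-s)^n(\beta-s)^{-n}\le 1$ reaches the same conclusion in one line via dominated convergence.

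The quantitative bound $|F_x(0)|\le c_0\,C(x)$, however, is false as literally stated, so your proposed iteration cannot succeed. If $F_x\equiv a$ is constant then $F_x^{(n+1)}\equiv 0$ and the hypothesis holds for every $C(x)>0$, yet $F_x(0)=a$ is arbitrary. The same obstruction kills your iteration scheme: integrating $|F_x^{(n+1)}(t)|\le C(x)(\beta-t)^{-n}$ gives $|F_x^{(n)}(t)|\le|F_x^{(n)}(0)|+C'\,C(x)(\beta-t)^{-(n-1)}$, and the initial value $F_x^{(n)}(0)$ is uncontrolled by $C(x)$; this uncontrolled constant propagates all the way down. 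What the paper's argument actually establishes (modulo a gap of its own in the displayed difference estimate, where the Taylor polynomial contribution is silently dropped) is control of the \emph{remainder}, i.e.\ $\bigl|F_x(\beta)-\sum_{k=0}^n\tfrac{\beta^k}{k!}F_x^{(k)}(0)\bigr|\le c_0\,C(x)$, which your own computation already yields directly with the sharp constant $\beta/n!$. In the application (Theorem~\ref{distrlimit}) this suffices, because there the initial data $F_\eta^{(k)}(0)$ are themselves continuous seminorms of $\eta$; but as a stand-alone inequality, the bound on $|F_x(0)|$ or $|F_x(\beta)|$ by $c_0\,C(x)$ alone is not provable, and you should not attempt to prove it.
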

\begin{proof}
Multiple applications of the fundamental theorem of calculus yield
$$F_x(t_0)-\sum_{k=0}^{n+1}\frac{F_x^{(k)}(0)}{k!}t_0^k=\int_0^{t_0}dt_1\dots \int_0^{t_{n}}dt_{n+1}F_x^{(n+1)}(t_{n+1}).
$$
For $t_0\leq t_0^\prime<\beta$, this implies that
\begin{align*}
|F_x(t_0)-F_x(t_0^\prime)|\leq &\int_{t_0}^{t_0^\prime}dt_1\dots \int_0^{t_{n}}dt_{n+1}|F_x^{(n+1)}(t_{n+1})|\\
\leq& \int_{t_0}^{t_0^\prime}dt_1\underbrace{\int_0^{t_{1}}dt_{2} \dots \int_0^{t_{n}}dt_{n+1} \frac{C(x)}{(\beta-t_{n+1})^n}}_{=:\,C(x)\cdot g(t_1)}.
\end{align*}
Since the function $[0,\beta)\ni t\mapsto g(t)$ is given by a multiple of $p(t)-\ln(\beta-t)$ which does not depend on $x\in X$, for some polynomial $p$, it is integrable and thus one has $$\lim_{t_0\to \beta}|F_x(t_0)-F_x(t_0^\prime)|=0,\qforall x\in X.$$
Hence $F_x$ extends to a continuous function $F_x:[0,\beta]\to \C$. Furthermore, the assertion follows from the estimate
\begin{align*}
|F_x(t_0)-F_x(t_0^\prime)|\leq c_0\cdot C(x),\qfor c_0:= \int_0^{\beta} g(t)dt.\tag*{\qedhere}
\end{align*}
\end{proof}
We can now prove an analogue of \cite[Thm. 7.2.6]{BER99}, which states that the boundary values of holomorphic functions of slow growth exist in the space of distributions. The following theorem generalizes \cite[Thm. 2.26]{FNO23} on unitary representations of $\R$.
\begin{thm}\label{distrlimit}
Let {\rm~$\pi:\R\to \GL(V)$} be a strongly continuous representation on a complex topological vector space $V$ and $v\in V_\beta^\omega$ be an analytic vector with slow growth on $\cS_{\pm \beta}$. Then $$\lim_{t\nearrow \beta}\pi^v(it)\in(V_{\rm str}^\prime)^{-\infty}$$ exists in the weak-$\ast$-topology.
\end{thm}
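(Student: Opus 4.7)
The natural candidate for the weak-$\ast$ boundary value is the linear functional
$$\Lambda:(V_{\rm str}^\prime)^\infty\to\C,\qquad \Lambda(\lambda):=\lim_{t\to\beta^-}\lambda(\pi^v(it)).$$
The proof splits into verifying that the scalar limit exists for every smooth covector $\lambda$, and then checking that $\Lambda$ extends to a continuous linear functional on $(V_{\rm str}^\prime)^\infty$, that is, lies in $(V_{\rm str}^\prime)^{-\infty}$; the weak-$\ast$ convergence $\pi^v(it)\to\Lambda$ is then immediate from the definition of $\Lambda$.

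Fix $\lambda\in (V_{\rm str}^\prime)^\infty$ and set $F_\lambda(t):=\lambda(\pi^v(it))$ for $t\in(-\beta,\beta)$. By weak analyticity, $F_\lambda$ is the restriction of the holomorphic function $z\mapsto\lambda(\pi^v(z))$ on $\cS_{\pm\beta}$. Analytic continuation in $t$ of the cocycle identity $\pi^v(s+t)=\pi(s)\pi^v(t)$, applied against every covector and using that $V^\prime$ separates points together with the continuity of $\pi(s):V\to V$, promotes this identity to $\pi^v(s+z)=\pi(s)\pi^v(z)$ for $s\in\R$ and $z\in\cS_{\pm\beta}$. Combining this with holomorphy and the definition of $\pi^\ast(\partial_x)$ on smooth covectors yields by induction
$$F_\lambda^{(k)}(t)=(-i)^k\bigl(\pi^\ast(\partial_x)^k\lambda\bigr)(\pi^v(it)),\qquad k\in\N_0.$$
The slow-growth hypothesis supplies $N\geq 0$ and a bounded set $B\subseteq V$ with $(\beta-|t|)^N\pi^v(it)\in B$, so linearity gives
$$\bigl|F_\lambda^{(N+1)}(t)\bigr|\leq(\beta-|t|)^{-N}\,|\pi^\ast(\partial_x)^{N+1}\lambda|_B$$
for all $|t|<\beta$.

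Lemma \ref{lemsmoothext}, applied with $n=N$ and $C(\lambda):=|\pi^\ast(\partial_x)^{N+1}\lambda|_B$, then shows that $F_\lambda$ extends continuously from $[0,\beta)$ to $[0,\beta]$; the symmetric argument treats $(-\beta,0]$, so $\Lambda(\lambda)$ exists. Inspecting the telescoping estimate in the proof of that lemma produces a constant $c_0>0$, depending only on $\beta$ and $N$, with $|F_\lambda(t)-F_\lambda(0)|\leq c_0\,C(\lambda)$ for every $t\in[0,\beta)$. Passing to the limit $t\to\beta$ and estimating $|F_\lambda(0)|=|\lambda(v)|=|\lambda|_{\{v\}}$ yields
$$|\Lambda(\lambda)|\leq |\lambda|_{\{v\}}+c_0\,|\lambda|_{B,N+1},$$
which is a continuous seminorm on $(V_{\rm str}^\prime)^\infty$. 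Hence $\Lambda\in(V_{\rm str}^\prime)^{-\infty}$ and coincides with $\lim_{t\to\beta}\pi^v(it)$ in the weak-$\ast$-topology.

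The main obstacle is the derivative recursion for $F_\lambda^{(k)}$: in the non-Banach setting one cannot differentiate $\pi^v$ a priori as a $V$-valued function, and the infinitesimal generator lives on the covector side as $\pi^\ast(\partial_x)$. Establishing the recursion therefore requires transferring the holomorphic $z$-derivative of $\lambda\circ\pi^v$ through the extended cocycle identity onto $\lambda$, using weak holomorphy, the continuity of $\pi(s)$ on $V$, and the fact that evaluation at a fixed vector is continuous on $V^\prime_{\rm str}$. Once this identity is established, slow growth feeds directly into the quantitative Lemma \ref{lemsmoothext}, and both the existence of the limit and the continuity of $\Lambda$ follow with essentially no further analysis.
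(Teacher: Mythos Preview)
Your proposal is correct and follows essentially the same route as the paper's proof: both pair the weakly holomorphic orbit map against a smooth covector, convert $t$-derivatives into $x$-derivatives via Cauchy--Riemann, recognize the resulting expression as the seminorm $|\cdot|_{B,N+1}$, and feed the bound into Lemma~\ref{lemsmoothext}. Your treatment is in fact slightly more careful than the paper's on two points: you justify the derivative identity $F_\lambda^{(k)}(t)=(-i)^k(\pi^\ast(\partial_x)^k\lambda)(\pi^v(it))$ through the extended cocycle relation rather than writing the somewhat formal $\partial_x^n\pi^v$, and your final continuity estimate retains the $|\lambda|_{\{v\}}$ term that the paper absorbs into $c_0\,|\eta|_{B,N+1}$ without comment.
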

\begin{proof}
Let $B\subseteq V$ be a bounded subset and $N\in \N$ such that
$$(\beta-|t|)^N\pi^v(it)\in B,\qfor |t|<\beta.
$$
For $\eta\in (V_{\rm str}^\prime)^\infty$, we consider the holomorphic function $$F_\eta:\cS_\beta\to \C,\quad x+it\mapsto\eta(\pi^v(x+it)).$$ The Cauchy-Riemann equations imply that, for $n\in \N_0$, one has
\begin{align*}
|(\del_t^n F_\eta)(it)|=&|(\del_x^n F_\eta)(it)|=|\eta((\del_x^n\pi^v)(it)|\leq |\eta|_{B,n}\cdot (\beta-|t|)^{-N},
\end{align*}
for all $|t|<\beta$. In particular, Lemma \ref{lemsmoothext} implies that $\lim_{t\to \beta}F_\eta(it)$ exists. The assertion now follows, since there exists $c_0>0$ such that
\begin{align*}
\left|\lim_{t\to \beta}\eta(\pi^v(it))\right|=\left|\lim_{t\to \beta}F_\eta(it)\right|\leq c_0\cdot |\eta|_{B,N+1},\qforall \eta\in (X^\prime)^\infty.\tag*{\qedhere}
\end{align*}
\end{proof}
\begin{cor}\label{corSlowGrowth}
Let $(\pi,\cH)$ be a strongly continuous Hilbert representation of $\R$ and $v\in \cH_\beta^\omega$ be a vector of slow growth. Then $$\lim_{t\nearrow \beta}\pi^v(it)\in \cH^{-\infty}$$
exists in the weak-$\ast$-topology.
\end{cor}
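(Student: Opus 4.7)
The plan is to derive the corollary as an essentially immediate specialization of Theorem \ref{distrlimit} to $V=\cH$, with the key observation being that for a Hilbert space the strong topological dual agrees (antilinearly) with $\cH$ itself via the Riesz isomorphism. First I would recall that the Riesz representation theorem gives an antilinear homeomorphism $R:\cH\to \cH_{\rm str}^\prime$, $v\mapsto \langle\,\cdot\,,v\rangle$, and that under this identification the dual one-parameter group $\pi^\ast(t)\lambda(w):=\lambda(\pi(-t)w)$ corresponds to the adjoint one-parameter group $t\mapsto \pi(-t)^\ast$ acting strongly continuously on $\cH$. In particular $(\cH_{\rm str}^\prime)^\infty$ identifies as a locally convex space with the smooth vectors for the adjoint one-parameter group on $\cH$, and its continuous dual is by definition $\cH^{-\infty}$.

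Next I would verify that the slow growth hypothesis in the Hilbert setting matches the one used in Theorem \ref{distrlimit}. The hypothesis $\|\pi^v(it)\|\leq C(\beta-|t|)^{-N}$ means that the scaled orbit $(\beta-|t|)^N\pi^v(it)$ is contained in the closed ball $B:=\{w\in \cH\;|\;\|w\|\leq C\}$, which is a bounded subset of $\cH$ in the topological vector space sense. Hence $v$ is, in the terminology of the previous subsection, a weakly analytic vector of slow growth on $\cS_{\pm\beta}$ with respect to the witnessing bounded set $B$, so all hypotheses of Theorem \ref{distrlimit} are met.

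Finally, Theorem \ref{distrlimit} yields that $\lim_{t\to\beta}\pi^v(it)$ exists in the weak-$\ast$-topology of $(\cH_{\rm str}^\prime)^{-\infty}$. Transporting through the Riesz identification and the resulting isomorphism $(\cH_{\rm str}^\prime)^{-\infty}\cong \cH^{-\infty}$ concludes the proof.

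The only point that requires care, and which I expect to be the one mild obstacle, is checking that the antilinear Riesz identification really does intertwine $\pi^\ast$ with $t\mapsto \pi(-t)^\ast$ in a way compatible with the strong topology and with the construction of smooth vectors, so that the two definitions of $\cH^{-\infty}$ (the one via $(V_{\rm str}^\prime)^\infty$ from the previous subsection and the standard one used throughout the paper) actually coincide. Once this bookkeeping is carried out the corollary is automatic.
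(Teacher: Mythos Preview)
Your proposal is correct and follows exactly the paper's approach: the corollary is deduced from Theorem \ref{distrlimit} by observing that for a Hilbert space $V=\cH$ one has $V_{\rm str}^\prime=\cH$ via Riesz, so $(\cH_{\rm str}^\prime)^{-\infty}=\cH^{-\infty}$. The paper states this in one line without spelling out the bookkeeping you describe, but your additional verification that the slow-growth hypothesis matches and that the dual action transports correctly is precisely the content needed to make that one line rigorous.
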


\subsection{Tempered vectors in unitary representations}\label{subsTemp}
Let $G$ be a connected semisimple Lie group. For  a unitary representation $(\pi,\cH)$, let
$$\cH_{\Omega_\p}:=\bigcap_{x\in \Omega_\p}\cD(e^{i\del\pi(x)}).
$$
If $\pi$ is irreducible, then $\cH^{[K]}\subseteq \cH_{\Omega_\p}$ is a dense subspace in $\cH$ (cf. Theorem \ref{thmKrSt}). The main results of this subsection are that for an arbitrary unitary representation $(\pi,\cH)$ with $Z(G)$-character, the space of \textit{tempered vectors} $\cH_{\rm temp}$ defined as
$$\left\{v\in \cH_{\Omega_\p}\;|\;\exists C,N>0 \;:\;\|e^{i\del\pi(x)}v\|\leq C\left(\pitwo-r_{\rm spec}(\ad(x))\right)^{-N}\qforall x\in \Omega_\p\right\}
$$
is dense in $\cH$ and if $\pi$ is irreducible, every $K$-finite vector is tempered. 
\begin{rem}
\rm{Let $(\pi,\cH)$ be an irreducible unitary representation of a connected semisimple Lie group $G$. In \cite[Appendix A.1]{FNO23}, the authors argue that the Casselman Subrepresentation Theorem (cf. \cite[Prop. 8.23]{CM82}) generalizes to arbitrary connected semisimple Lie groups, i.e. there exists an embedding of $(\g,K)$-modules
$$\varphi:\cH^{[K]}\hookto \cH_{\sigma,\lambda}^{[K]}:=\cH_{\sigma\times \lambda\times 1}^{[K]},
$$
for an irreducible unitary representation $[\sigma]\in \widehat{M}$, $\lambda\in \a_\C^\ast$ and the corresponding principal series representation associated to the irreducible representation $\sigma\times \lambda\times 1$ of $P_{\rm min}=MAN$. The functionals $\lambda+\rho\in \a_\C^\ast$, for which there exists an embedding $\cH^{[K]}\hookto \cH_{\sigma,\lambda}^{[K]}$ are called \textit{leading characters} of $(\pi,\cH)$. Since the representation $(\pi,\cH)$ is unitary, it has bounded matrix coefficients $$\pi^{v,w}:G\to \C,\quad g\mapsto\langle v,\pi(g)w\rangle,\qfor v,w\in \cH.$$
Then, $\pi^{v,w}$ is a $K$-left and $K$-right finite $Z(\g)$-finite smooth function on $G$. In \cite{CM82} such functions are studied under the assumption that $G$ has finite center. This assumption is never used since the algebraic prerequisites of their theory only require that $K$ operates on $\g$ as a compact group. In particular, by defining radial components in the same manner and using $G=KAK$, one arrives at the same analytic differential equation with regular singularities for the restriction of $\pi^{v,w}$ to $A^+=\exp(\a^+)$, where $\a^+\subseteq \a$ is a Weyl chamber. Therefore, all results immediately generalize. In particular, \cite[Thm. 8.11]{CM82} and the fact that $\pi$ has bounded matrix coefficients implies that 
$$\Re(\lambda)(H)\leq \rho(H),\qforall H\in \overline{\a_+}.$$
}
\end{rem}
\begin{lem}
Let $(\pi,\cH)$ be an irreducible unitary representation of a connected semisimple Lie group $G$ and let 
$$\varphi:\cH^{[K]}\hookto \cH_{\sigma,\lambda}^{[K]}$$
be an embedding of $(\g,K)$-modules. Let $V:=\im(\varphi)$ and $\psi:=(\varphi^{-1})^\ast: \cH^{[K]}\to V$. Then, one has
\begin{equation}\label{EqScp}
\langle \varphi(v),\psi(w)\rangle_{\cH_{\sigma,\lambda}}=\langle v,w\rangle_{\cH},\qforall v,w\in \cH^{[K]}
\end{equation}
and $\psi$ is a $(\g,K)$-module for the $(\g,K)$-module structure on $\cH_{\sigma,\lambda}$ induced by the dual representation $\pi_{\sigma,\lambda}^\ast(g):=\pi_{\sigma,\lambda}(g^{-1})^\ast$.
\end{lem}
\begin{proof}
As $\varphi:\cH^{[K]}\to V$ is an isomorphism, its inverse $\varphi^{-1}:V\to \cH^{[K]}$ maps the finite-dimensional $K$-isotypical components to finite-dimensional $K$-isotypical components. Hence, the adjoint $\psi=(\varphi^{-1})^\ast$ can be defined on the $K$-types and be extended by linearity. In particular, $\psi$ clearly satisfies (\ref{EqScp}). It follows from
\begin{align*}
\langle\varphi(v),\pi_{\sigma,\lambda}(k)^{-1}\psi(w)\rangle_{\cH_{\sigma,\lambda}}=&\langle\pi_{\sigma,\lambda}(k)\varphi(v),\psi(w)\rangle_{\cH_{\sigma,\lambda}}=\langle\varphi(\pi(k)v),\psi(w)\rangle_{\cH_{\sigma,\lambda}}\\
=&\langle \pi(k)v,w\rangle_\cH
=\langle v,\pi(k)^{-1}w\rangle_\cH\\
=&\langle\varphi(v),\psi(\pi(k)^{-1}w)\rangle_{\cH_{\sigma,\lambda}}
\end{align*}
that $\psi(\pi(k)w)=\pi_{\sigma,\lambda}(k)\psi(w)$, i.e. $\psi$ is a $K$-module homomorphism. Similarly
\begin{align*}
\langle\varphi(v),d\pi_{\sigma,\lambda}(X)^\ast\psi(w)\rangle_{\cH_{\sigma,\lambda}}=&\langle d\pi_{\sigma,\lambda}(X)\varphi(v),\psi(w)\rangle_{\cH_{\sigma,\lambda}}=\langle\varphi(d\pi(X)v),\psi(w)\rangle_{\cH_{\sigma,\lambda}}\\
=&\langle d\pi(X)v,w\rangle_\cH
=\langle v,d\pi(X)^\ast w\rangle_\cH\\
=&\langle\varphi(v),\psi(d\pi(X)^\ast w)\rangle_{\cH_{\sigma,\lambda}}
\end{align*}
and $d\pi(X)^\ast=-d\pi(X)$ imply that $\psi(d\pi(X)w)=-d\pi_{\sigma,\lambda}(X)^\ast\psi(w)=d\pi_{\sigma,\lambda}^\ast(X)\psi(w)$.
\end{proof}
\begin{thm}\label{thmUnitGrowth}
Let $G$ be a connected semisimple Lie group and $(\pi,\cH)$ be an irreducible unitary representation.
\begin{itemize}
\item[{\rm (i)}] 
For a leading character $\lambda\in \a_\C^\ast$ of $(\pi,\cH)$ and $\sigma\in \widehat{M}$ such that there exists a non-zero $(\g,K)$-module homomorphism $\varphi:\cH^{[K]}\to\cH_{\sigma,\lambda}^{[K]}$
one has
$$\|e^{i\del\pi(x)}v\|_\cH^2=\left\langle e^{i\del\pi_{\sigma,\lambda}(x)}\varphi(v),e^{i\del\pi_{\sigma,\lambda}^\ast(x)}\psi(v)\right\rangle_{\cH_{\sigma,\lambda}},$$
for all $x\in \Omega_\p$ and $v\in \cH^{[K]}$.
\item[{\rm (ii)}] Every $K$-finite vector $v\in \cH^{[K]}$ is tempered and there exist $N_1,N_2>0$ independent of $(\pi,\cH)$ and $v\in \cH^{[K]}$ and $C_{v,\pi}>0$ such that
$$\|e^{i\del\pi(x)}v\|^2\leq C_{v,\pi}\cdot \left(\pitwo-r_{\rm spec}(\ad(x))\right)^{-(N_1+N_2|\tau|)},
$$
for all $[\tau]\in \widehat{K}$, $v\in \cH_{[\tau]}^{[K]}$ and $x\in \Omega_\p$.
\end{itemize}

\end{thm}
\begin{proof}
(i) Consider the analytic kernels $K_i:G\times G\to \C$ defined as
$$K_1(g_1,g_2):=\langle \pi(g_1)v,\pi(g_2)v\rangle_\cH\qand K_2(g_1,g_2):=\langle \pi_{\sigma,\lambda}(g_1)\varphi(v),\pi_{\sigma,\lambda}^\ast(g_2)\psi(v)\rangle_{\cH_{\sigma,\lambda}}.
$$
For $D_1,D_2\in \cU(\g)$, one has
\begin{align*}
R_{D_1,D_2}K_1(e,e)=&\langle d\pi(D_1)v,d\pi(D_2)v\rangle_\cH=\langle \varphi (d\pi(D_1)v),\psi(d\pi(D_2)v)\rangle_{\cH_{\sigma,\lambda}}\\
=&\langle d\pi_{\sigma,\lambda}(D_1)\varphi(v),\pi_{\sigma,\lambda}^\ast(D_2)\psi(v)\rangle_{\cH_{\sigma,\lambda}}=R_{D_1,D_2}K_2(e,e).
\end{align*}
Therefore, $K_1,K_2:G\times G\to \C$ are analytic functions on a connected analytic manifold whose derivatives coincide in $(e,e)$, i.e. $K_1=K_2$ follows. The kernel $K:=K_1=K_2$ extends to a holomorphic function $K:\wxi\times \wxi\to \C$ by Proposition \ref{PropMatrixCoeff}, which satisfies
\begin{align*}
\left\|e^{i\del\pi(x)}v\right\|_\cH^2=K(\exp(-ix),\exp(ix))=\left\langle e^{i\del\pi_{\sigma,\lambda}(x)}\varphi(v),e^{i\del\pi_{\sigma,\lambda}^\ast(x)}\psi(v)\right\rangle_{\cH_{\sigma,\lambda}}.
\end{align*}
(ii) The Cauchy-Schwarz Inequality and (i) imply that
$$\left\|e^{i\del\pi(x)}v\right\|_\cH^2\leq \left\| e^{i\del\pi_{\sigma,\lambda}(x)}\varphi(v)\right\|_{\cH_{\sigma,\lambda}}\left\|e^{i\del\pi_{\sigma,\lambda}^\ast(x)}\psi(v)\right\|_{\cH_{\sigma,\lambda}}.
$$
Since $(\pi_{\sigma,\lambda}^\ast,\cH_{\sigma,\lambda})\cong (\pi_{\sigma^\ast,-\overline{\lambda}},\cH_{\sigma^\ast,-\overline{\lambda}})$, the assertion follows from Theorem \ref{thmPrinc}.
\end{proof}
\begin{rem}
\rm{By Theorem \ref{thmPrinc}, the constant $C_{v,\pi}$ in the theorem above is given by  $$C_{v,\pi}:=C\cdot d_\tau^3\cdot e^{|\Im(\lambda)|}\cdot\|\varphi(v)\|_{\cH_{\sigma,\mu}}\|\psi(v)\|_{\cH_{\sigma,\mu}}.$$
For our purposes, its existence is sufficient. What is more surprising to us and useful for our purposes, is that the exponent $N_1+N_2|\tau|$ only depends on $G$ and the $K$-type $[\tau]\in \widehat{K}$.
}
\end{rem}
\begin{thm}
Let $(\pi,\cH)$ be a unitary representation of a connected semisimple Lie group $G$ with $Z(G)$-character. Then the space $\cH_{\rm temp}^{[K]}:=\cH_\temp\cap \cH^{[K]}$ is dense in $\cH$.
\end{thm}
\begin{proof}
Consider the central subgroup $$Z_\chi:=\{(z,\chi(z)^{-1})\;|\;z\in Z(G)\}\subseteq K\times \T$$
and define $K\times_\chi\T:=(K\times \T)/Z_\chi$. Since $\Ad(K)\cong K/Z(G)$ is compact, the group $K_\chi$ is compact as a bundle over a compact manifold with compact fibres. The representation $\pi\vert_K$ extends to a representation of $K_\chi$ by
$$\pi([(k,z)]):=z\pi(k), \qfor k\in K\qand z\in \T.
$$
Since $\pi(K)$ and $\pi(K_\chi)$ leave the same subspaces invariant, it follows from Peter-Weyl's Theorem applied to the compact group $K_\chi$ that $\cH$ decomposes into finite-dimensional $K$-invariant subspaces and $ \cH^{[K]}\subseteq \cH$ is dense. It therefore suffices to prove that every $K$-finite vector can be approximated by $K$-finite tempered vectors. The desintegration theory of representations of $G$ and the separable liminal C$^\ast$-algebra $C^\ast(G)$ are the same. Therefore, \cite[Thm. 8.6.5]{Dix69} implies that there exist a measure space $(X,\Sigma,\mu)$ and a measurable field of irreducible unitary representations $(\pi_x,\cH_x)_{x\in X}$ of $G$ with $Z(G)$-character $\chi$ such that $$(\pi,\cH)\cong\int_X^\oplus(\pi_x, \cH_x)d\mu(x).$$
We assume from now on $\cH=\int_X^\oplus\cH_xd\mu(x)$. If $\cE\subseteq \cH^{[K]}$ is a finite-dimensional $K$-invariant subspace, then the inclusion $\cE\hookto \cH$ is Hilbert-Schmidt and $K$-equivariant. In particular, the inclusion is pointwise defined, which means that there exist uniquely determined $K$-equivariant continuous linear maps $T_{x,\cE}:\cE\to \cH_x$ such that $v=(T_{x,\cE}(v))$, for all $v\in \cE$ (cf. \cite[Lemma 1.3, Thm. 1.5]{Be88}). By decomposing $\cH^{[K]}$ into finite-dimensional $K$-invariant subspaces, one obtains $(\g,K)$-module homomorphisms $T_x:\cH^{[K]}\to \cH_x^{[K]}$ such that $v=(T_x(v))_{x\in X}$, for all $v\in \cH^{[K]}$. For $[\tau]\in \widehat{K}$ and $v\in \cH_{[\tau]}^{[K]}$, let $v_x:=T_x(v)$ and consider vector field
$$e^{i\del\pi(h)}v:X\to \coprod_{x\in X} \cH_x,\quad x\mapsto e^{i\del\pi_x(h)}v_x=\sum_{k=0}^\infty \frac{i^k}{k!}d\pi_x(h)^kv_x
$$
is well-defined for all $h\in \Omega_\p$ and measurable as a series of the measurable vector fields $d\pi(h)^kv$, which are measurable as derivatives of measurable vector fields. Since $v_x\in \cH_{x,[\tau]}^{[K]}$, for $\mu$-almost all $x\in X$, Theorem \ref{thmUnitGrowth} implies that there exists $N_1,N_2>0$ such that
$$C_x:=\sup_{h\in \Omega_\p} \frac{\|e^{i\del\pi_x(h)}v_x\|}{\|v_x\|}\left(\pitwo-r_{\rm spec}(\ad(h))\right)^{N_1+N_2|\tau|}<\infty,\qforall x\in X.
$$
Note that the map
$$\Omega_\p\ni h\mapsto \frac{\|e^{i\del\pi_x(h)}v_x\|}{\|v_x\|}\left(\pitwo-r_{\rm spec}(\ad(h))\right)^{N_1+N_2|\tau|}$$
is continuous and there exists a countable dense subset of $\Omega_\p$. In particular, the map $C:X\to (0,\infty),\;x\mapsto C_x$ is measurable as a supremum over a countable subset and
$$\|e^{i\del\pi_x(h)}v_x\|\leq C_x\cdot \left(\pitwo-\rho(h)\right)^{-(N_1+N_2|\tau|)}\|v_x\|,\qforall h\in \Omega_\p \qand x\in X.
$$
For $X_n:=C^{-1}((0,n))\subseteq X$ and the characteristic function $\mathds{1}_{X_n}$, the vector fields $$v_n:X\to \coprod_{x\in X}\cH_x,\quad x\mapsto \mathds{1}_{X_n}(x)\cdot v_x$$ 
are measurable, square-integrable and $K$-finite. Furthermore,
$$\|e^{i\del\pi(h)}v_n\|^2=\int_{X_n} \|e^{i\del\pi_x(h)}v_x\|^2d\mu(x)\leq n^2\cdot \left(\pitwo-\rho(h)\right)^{-2(N_1+N_2|\tau|)}\cdot \|v\|^2<\infty
$$
implies that $v_n\in \cH_\temp^{[K]}$ and thus the assertion follows from $v_n\to v$ as $n\to \infty$.
\end{proof}

\newpage
\appendix
\section{Holomorphic extension of $(Z(\g),K)$-finite functions}\label{subsDiffEq}
\setcounter{subsection}{1}
In this section, we argue that a similar theorem as the following holds for arbitrary connected semisimple Lie groups and $(Z(\g),K)$-right finite smooth functions $f\in C^\infty(G)$.
\begin{thm*}{\rm \cite[Thm. 3.5]{KSch09}}\\
Let $G\subseteq G_\C$ be a linear Lie group and $f\in C^\infty(G)$ be a $K$-right finite eigenfunction of the quadratic Casimir operator $C_\g$. Then, $f$ extends to a holomorphic function on 
$$\Xi_{G_\C}=G\exp(i\Omega_\p)K_\C\subseteq G_\C.
$$
\end{thm*}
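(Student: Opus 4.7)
The plan is to split the extension into two parts: the $K_\C$-direction, handled directly by $K$-right finiteness, and the complex radial direction, handled by a holomorphic ODE derived from the Casimir equation.

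For the $K_\C$-direction, $K$-right finiteness implies that for each $g\in G$ the function $k\mapsto f(gk)$ lies in a finite-dimensional $K$-invariant subspace of $C^\infty(K)$ whose $\k_\C$-representation integrates to the linear complex reductive group $K_\C\subseteq G_\C$; hence it extends holomorphically to $K_\C$. In view of $\Omega_\p=\Ad(K)\Omega_\a$, the remaining task is to extend, for each $g\in G$ and $H\in\del\Omega_\a$, the real-analytic map $t\mapsto f(g\exp(tH))$ from $\R$ to the strip $\cS_{\pm 1}:=\{z\in\C:|\Im z|<1\}$.

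For this I would use the Casimir equation together with the Iwasawa decomposition $\g=\k\oplus\a\oplus\n$ to derive a holomorphic first-order linear ODE system along complexified $A$-paths. Concretely, choose a finite-dimensional, $R(\cU(\k))$-stable subspace $W\subseteq C^\infty(G)$ containing $f$, on which $R(C_\g)$ acts as the scalar $\lambda$; such a $W$ exists because $f$ is $K$-right finite and a Casimir eigenfunction. Harish-Chandra's radial decomposition of $C_\g$ with respect to $A$ trades $\a$-derivatives against $\cU(\k)$- and $\n$-directional derivatives modulo a term involving $C_\g$ (which acts as $\lambda$ on $W$), with coefficients rational in $\sinh(\alpha)$ and $\cosh(\alpha)$ for $\alpha\in\Sigma$. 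Coupled with the identity $\Ad(\exp(tH))|_{\g_\alpha}=e^{t\alpha(H)}\mathrm{Id}$, this yields an equation $F'(t)=M(t)F(t)$ for the vector $F(t):=(R(w)f(g\exp(tH)))_{w\in W}$, where $M(t)$ is meromorphic in $t\in\C$ with singularities located at points where some $\sinh(t\alpha(H))$ vanishes, i.e., where $t\alpha(H)\in i\pi\Z\setminus\{0\}$. For $H\in\del\Omega_\a$ one has $|\alpha(H)|\leq\pitwo$, so any such singularity forces $|\Im t|\geq 2$, safely outside $\cS_{\pm 1}$. Hence $M(t)$ is holomorphic on $\cS_{\pm 1}$, and standard holomorphic ODE theory extends $F$, and therefore $f$, to the strip. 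The local extensions patch to a global holomorphic $f:\Xi_{G_\C}\to\C$ by uniqueness of analytic continuation and the connectedness of $\Xi_{G_\C}$.

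The main obstacle is constructing the finite-dimensional $W$ and controlling the precise form of $M(t)$: one must verify that the radial-part computation closes on a genuinely finite-dimensional system, which is where the Casimir eigenfunction hypothesis is used essentially, through the eigenvalue equation $R(C_\g)W=\lambda W$ which keeps the system finite-dimensional. This is also precisely the step where the generalization in Subsection \ref{subsDiffEq} replaces the eigenvalue condition by $(Z(\g),K)$-finiteness via Osborne's Lemma, and where the lift from $\Xi_{G_\C}$ to the simply connected cover $\wt{\Xi}_{G_\C}$ accommodates non-linear $G$.
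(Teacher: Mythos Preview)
Your plan has a genuine gap at the heart of the ODE step. The reduction to a first-order system $F'(t)=M(t)F(t)$ along $t\mapsto g\exp(tH)$ does not close. The Casimir $C_\g=\sum H_j^2+\sum_{\gamma,k}(X_k^{(\gamma)})^2-C_\k$ contains second-order derivatives in the full $\p$-direction; $K$-right finiteness lets you replace the $\k$-derivatives by the finite-dimensional operator $d\tau(C_\k)$ on $W$, but the terms $R(X_k^{(\gamma)})^2f$ for $X_k^{(\gamma)}\in\p_\gamma\subseteq\p\ominus\a$ are genuine transverse derivatives that are \emph{not} expressible through the values $\{R(w)f(g\exp(tH)):w\in W\}$ along the single $A$-ray. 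Rewriting them via $\g=\k\oplus\a\oplus\n$ trades them for $\n$-derivatives, but $W$ is only $R(\cU(\k))$-stable, not $R(\cU(\n))$-stable, so the system still does not close; enlarging $W$ to be $\cU(\n)$-stable makes it infinite-dimensional. The ODE picture you have in mind is the one for $\tau$-spherical (i.e.\ $K$-\emph{bi}-finite) functions, where restriction to $A$ genuinely yields a finite system; mere $K$-right finiteness is not enough. As a secondary point, even in the bi-finite setting the coefficient singularities are governed by $\cosh$ as well as $\sinh$ (cf.\ the identity~(\ref{Xkgamma})), and $\cosh(it\gamma(H))=0$ with $|\gamma(H)|=\tfrac{\pi}{2}$ already occurs at $|t|=1$, on the boundary of $\cS_{\pm 1}$ rather than safely outside it.

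The paper (following \cite{KSch09}) treats $C_\g$ as the elliptic PDE $\Delta_{G/K}-d\tau(C_\k)$ on sections of $G\times_K V\to G/K$ and never reduces to an ODE. Analytic elliptic regularity first gives a local holomorphic extension of the section to a $G\times\wt{K}_\C$-invariant neighbourhood of $G/K$ inside $\Xi$ (Proposition~\ref{corHolExt}). One then works on the open dense $\Xi^+\cong G\times i\Omega_\a^+\times_K K_\C$, where the holomorphically extended $C_\g$ has the same characteristic variety as $\Delta_{G/K}$ (Remark~\ref{remCharVar}); the boundaries of the tubes $T_r(h_0)/\wt{K}_\C$ are non-characteristic, so H\"ormander's theorem \cite[Thm.~9.4.7]{H\"or89} propagates the holomorphic extension across them and fills out all of $\Xi^+$. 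Finally Bochner's tube theorem and Riemann's removable singularity theorem extend from $\Xi^+$ to $\Xi$ (Theorem~\ref{ThmHolExt}). Osborne's Lemma plays no role in this argument; the passage from eigenfunctions to $C_\g$-finite functions is handled by observing that $p(C_\g)$ has the same characteristic variety as $C_\g$.
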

The essential ideas from \cite{KSch09} immediately generalize to arbitrary connected Lie groups $G$, with our primary contribution being the replacement of the condition on $f$ being a $C_\g$-eigenfunction by the condition on $f$ being $C_\g$-finite. This does not require any new methods as the framework, we work with, has already been extensively developed in \cite{KS05,KSch09} and a differential operator and a polynomial in this differential operator have the same characteristic variety (cf. Definition \ref{defDiffOp}(iii)). 
\begin{defin}\label{defDiffOp}
\rm{For $i=1,2$, let $q_i:X_i\epi M$ be smooth complex vector bundles with typical fibre $V_i$ over a fixed smooth manifold $M$ and let $\Gamma^\infty(M,X_i)$ be the corresponding spaces of smooth sections. 
\begin{itemize}
\item[(i)] A linear operator $P:\Gamma^\infty(M,X_1)\to \Gamma^\infty(M,X_2)$ is called a \textit{linear differential operator} if $P$ does not increase the support of sections, which means that, in local coordinates, $P$ is given by a linear differential operator
$$C_c^\infty(U,V_1)\to C_c^\infty(U,V_2),
$$
for local trivializations $(\varphi_i,U)$ of $q_i:X_i\to M$. For $x\in M$ and a chart $(\varphi,U)$ with coordinates $(x_1,\dots,x_n)$, the operator $P$ is given in these coordinates by
$$\sum_{|\alpha|\leq N} P_\alpha(x)\frac{\del^{\alpha_1}}{\del x_1^{\alpha_1}}\dots \frac{\del^{\alpha_n}}{\del x_1^{\alpha_n}},\qfor P_\alpha\in C^\infty(U, \Hom_\C(V_1,V_2)).
$$
\item[(ii)] The \textit{principal symbol} is defined as the map
$$\sigma(P):T^\ast M\to\Hom_\C(V_1,V_2),\quad \sigma(P)(\gamma):=\sum_{|\alpha|=m}P_\alpha(x)\xi^\alpha$$ 
for $\gamma=\sum_{j=1}^n \xi_jdx_j(x)\in T_xM^\ast$ and $\xi^\alpha:=\xi_1^{\alpha_1}\dots\xi_n^{\alpha_n}$. This definition is independent of the chosen chart (cf. \cite[Section 6.4]{H\"or89}). A linear differential operator is called \textit{elliptic} if its principal symbol is invertible, for all $0\neq \gamma\in T^\ast M$.
\item[(iii)] The \textit{characteristic variety} $\text{char}(P)$ of $P:\Gamma^\infty(M,X_1)\to \Gamma^\infty(M,X_2)$ is defined as
$$\{\gamma\in T^\ast M\;|\;\sigma(P)(\gamma)\in \Hom_\C(V_1,V_2)\quad\text{is not invertible}\}.$$
\end{itemize}
}
\end{defin}
\begin{rem}\label{remAnalyticReg}
\rm{Suppose $P\in \End(\Gamma^\infty(M,X))$ is an elliptic linear differential operator on an analytic complex vector bundle $X\epi M$ with typical fibre $V$. Then, the characteristic variety of $P$ is empty. Suppose $f\in \Gamma^\infty(M,X)$ satisfies the differential equation $Pf=0$. Then \cite[Thm. 8.6.1]{H\"or89} implies that, for the analytic wave front set $\text{WF}_A(f)$ of $f$, one has 
$$\text{WF}_A(f)\subseteq \text{char}(P)\cup \text{WF}_A(Pf)=\emptyset$$
and thus $f$ is analytic since its analytic wave front set is empty. We refer to this as the \textit{Analytic Elliptic Regularity Theorem}. We refer to \cite[Section 8.2]{H\"or89} for the definition of wave front sets on manifolds and wave front sets of sections as well as to \cite[Section 8.4]{H\"or89} for the definition of the analytic wave front set. Note that \cite[Thm. 8.6.1]{H\"or89} is stated for functions on an analytic manifold but is readily seen to apply to sections of analytic complex vector bundles as well since the ellipticity condition guarantees that the wavefront sets of all components of the section $f$ in a local trivialization have empty analytic wave front set. Another aspect of \cite[Thm. 8.6.1]{H\"or89} is that in the proof the radius of convergence of the Taylor series of the components of the section $f$ only depends on $P$ and $Pf$. In particular, for every point $x\in M$, there exists an open neighbourhood $U$ and a local trivialization around $U$ such that the Taylor series of all solutions $f$ of $Pf=0$ in this local trivialization converges.
}
\end{rem}
\begin{rem}\label{remEquivSec}
\rm{(a) Let $G$ be a connected semisimple Lie group. Consider the linear operators $L_X,R_X:C^\infty(G)\to C^\infty(G)$, for $X\in \g$, defined as
$$L_Xf(g):=\difftev f(\exp(-tX)g)\qand R_Xf(g):=\difftev f(g\exp(tX)).$$
The space $C^\infty(G)$ carries a $(\g\oplus\g,G\times G)$-bimodule structure defined by
$$ (X_1\oplus X_2).f:=L_{X_1}f+R_{X_2}f\qand (k_1,k_2).f:=L_{k_1}R_{k_2}f:k\mapsto f(k_1^{-1}kk_2),$$
for $f\in C^\infty(G)$. In particular, the $\g\oplus \g$-module structure defines algebra homomorphisms $L:\cU(\g)\to \End(C^\infty(G))$ and $R:\cU(\g)\to \End(C^\infty(G))$. Note that $L(\cU(\g))$ consists of right-invariant differential operators on $G$ and $R(\cU(\g))$ consists of left-invariant differential operators on $G$.\\

(b) We identify $\cU(\g)$ with left-invariant differential operators on $C^\infty(G)$ by the right-action $R:\cU(\g)\to \End(C^\infty(G))$ introduced in (a). Let $\Sigma^+\subseteq\Sigma:=\Sigma(\g,\a)$ be a positive system of restricted roots and let
$$\{Y_1,\dots,Y_m\}\subseteq \k,\quad \{H_1,\dots,H_r\}\subseteq \a\qand \{X_1^{(\alpha)},\dots,X_{m_\alpha}^{(\alpha)}\}\subseteq \p_\alpha,
$$
for $\alpha\in \Sigma^+$ and $\p_\alpha:=\p\cap (\g_\alpha+\g_{-\alpha})$, be orthonormal bases with respect to the Cartan-Killing form. Let $\cB$ denote the ortonormal basis of $\g$ one obtains as a union. Then the \textit{quadratic Casimir operator} $C_\g$ is given by
$$C_\g:=\sum_{j=1}^rH_j^2+\sum_{\alpha\in \Sigma^+}\sum_{k=1}^{m_\alpha}\left(X_k^{(\alpha)}\right)^2-\sum_{i=1}^m Y_i^2
$$
and the \textit{Laplace-Beltrami operator} $\Delta_{G/K}$ of the Riemannian symmetric space $G/K$ is given by
\begin{equation}\label{LaplBeltr}
\Delta_{G/K}=\sum_{j=1}^nH_j^2+\sum_{\alpha\in \Sigma^+}\sum_{k=1}^{m_\alpha}\left(X_k^{(\alpha)}\right)^2.
\end{equation}
That $\Delta_{G/K}$ is the Laplace-Beltrami operator of $G/K$ follows from left-invariance and the fact that it defines the Laplacian on $T_e(G/K)\cong \p$ with respect to the Cartan-Killing form. The quadratic Casimir operator for $\k$ is given by $$C_\k:=\sum_{i=1}^m Y_i^2\quad\text{and thus} \quad C_\g+2C_\k=\Delta,$$ where $\Delta\in \cU(\g)$ is the Laplacian element of $\g$ with respect to the basis introduced before. It defines an elliptic differential operator on $C^\infty(G)$ since it is left invariant and, for the corresponding dual basis $\{\mu_1,\dots,\mu_n\}\in \g^\ast$ of $\cB$, one has
$$0\neq\gamma=\sum_{j=1}^N\xi_j \mu_j\in T_eG^\ast=\g^\ast,\quad\text{one has}\quad \sigma(\Delta)(\gamma)=\sum_{j=1}^n\xi_j^2>0.
$$
Similarly one shows that $\Delta_{G/K}\in \cU(\g)^K$, identified with a left-invariant differential operator on $G/K$, defines an elliptic differential operator. \\

(c) Let $f:C^\infty(G)$ be a smooth $K$-right finite function, i.e. $V:=\spann_\C R(K)f\subseteq C^\infty(G)$ is a $K$-right invariant finite-dimensional subspace. We denote the corresponding representation of $K$ by $(\tau,V)$ and consider
$$F_\tau:G\to V^\ast,\quad g\mapsto F_\tau(g):=\ev_g
$$
which is $K$-right equivariant with respect to the dual representation $(\tau^\ast,V^\ast)$. Consider the associated vector bundle
$$\E^\ast:=G\times_K V^\ast:=(G\times V^\ast)/\sim,\qfor (g,\lambda)\sim (gk,\tau^\ast(k)^{-1}\lambda),\qfor k\in K.
$$
Note that $F_\tau$ defines a smooth section $s_f:G/K\to \E^\ast$ of $\E^\ast$ by
$$ s_f(gK):= [(gk,F_\tau(gk)]=[(gk,\tau^\ast(k)^{-1}F_\tau(g)]=[(g,F_\tau(g)],\qfor g\in G.
$$
Conversely every smooth section of $\E^\ast\epi G/K$ comes from a $K$-right equivariant smooth map $F:G\to V^\ast$ in this manner and, for $v\in V$, the map $G\to \C,\;g\mapsto F(g)(v)$ is a smooth $K$-right finite function. Since $(\tau^\ast,V^\ast)$ is a finite-dimensional representation of $K$, integrating the $\k_\C$-representation yields a holomorphic extension of $\tau^\ast$ to the simply connected covering $\wt{K}_\C$ of $K_\C\subseteq G_\C$. Observe that the bundle $G\times_K V$ is a subbundle of the holomorphic vector bundle 
$$\E_V:=\wt{\Xi}_{G_\C}\times_{\wt{K}_\C} V\epi \Xi,$$
whose holomorphic sections $\cO_{\rm sec}(\Xi,\E_V)$ are $\wt{K}_\C$-equivariant holomorphic maps $\wt{\Xi}_{G_\C}\to V$.
}
\end{rem}
\begin{lem}\label{lemCharSet}
Let $(\tau,V)$ be a finite-dimensional representation of $K$. The quadratic Casimir operator $C_\g$ acts as $\Delta_{G/K}- d\tau(C_\k)$ on the sections of the vector bundle\rm{
$$G\times_K V\epi G/K\qand \text{char}(\Delta_{G/K}- d\tau(C_\k))=\text{char}(\Delta_{G/K})=\emptyset.
$$}
\end{lem}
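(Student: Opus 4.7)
The approach is to pass to the equivariant function picture. Using the standard identification recalled in Remark~\ref{remEquivSec}(c), smooth sections of $G\times_K V\epi G/K$ correspond to smooth $K$-equivariant maps $F:G\to V$ satisfying $F(gk)=\tau(k)^{-1}F(g)$ for all $g\in G$ and $k\in K$. Since the elements $C_\g$, $C_\k$ and $\Delta_{G/K}$ all lie in $\cU(\g)^K$, the right-invariant differential operators $R_{C_\g}$, $R_{C_\k}$ and $R_{\Delta_{G/K}}$ preserve the subspace of equivariant functions and thus descend to differential operators on the bundle sections. The task reduces to an algebraic computation inside the universal enveloping algebra acting on equivariant maps.

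The central step is the observation that, on equivariant $F$, one has $R_YF=-d\tau(Y)F$ for every $Y\in\k$, obtained by differentiating $F(g\exp(tY))=\tau(\exp(tY))^{-1}F(g)$ at $t=0$. Because $d\tau(Y)$ is a constant endomorphism of $V$ that commutes with further applications of $R_Y$ on equivariant sections, iterating yields $R_{Y^2}F=d\tau(Y)^2F$, and summing over an orthonormal basis of $\k$ gives $R_{C_\k}F=d\tau(C_\k)F$. Combined with the identity $C_\g=\Delta_{G/K}-C_\k$ in $\cU(\g)$, which is immediate from the definitions in Remark~\ref{remEquivSec}(b) (since $\Delta=\Delta_{G/K}+C_\k$ and $C_\g+2C_\k=\Delta$), this yields
$$R_{C_\g}F=R_{\Delta_{G/K}}F-d\tau(C_\k)F,$$
establishing the first assertion of the lemma.

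For the characteristic variety, the plan is to note that $d\tau(C_\k)\in\End(V)$ is a zeroth-order operator and therefore does not contribute to the principal symbol; hence $\sigma(\Delta_{G/K}-d\tau(C_\k))=\sigma(\Delta_{G/K})$, so the two characteristic varieties coincide. In any local trivialization of $G\times_K V$, the operator $R_{\Delta_{G/K}}$ acts diagonally on the fiber components as the scalar Laplace-Beltrami operator of $G/K$, whose ellipticity is already recorded in Remark~\ref{remEquivSec}(b); its principal symbol at $(x,\xi)\in T^\ast(G/K)$ is therefore $|\xi|_g^2\cdot\id_V$, which is invertible for every nonzero $\xi$. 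Hence both characteristic varieties are empty.

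The only point that requires a moment's pause is checking that $R_{\Delta_{G/K}}$, viewed as an operator on sections of $G\times_K V$ rather than on scalar functions on $G$, still carries the same principal symbol as the scalar Laplace-Beltrami on $G/K$. This is transparent in a local trivialization, because $\Delta_{G/K}\in\cU(\p)$ is $\Ad(K)$-invariant and acts scalarly on the fiber, so the symbol computation is unaffected by the bundle structure. In short, there is no genuine obstacle — every step is either a direct algebraic identity in $\cU(\g)$ or a routine consequence of the definition of the principal symbol.
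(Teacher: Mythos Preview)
Your proposal is correct and follows essentially the same approach as the paper: identify sections with $K$-equivariant functions, use that $R_Y$ acts as $-d\tau(Y)$ on equivariant maps to convert the $C_\k$-part into the constant endomorphism $d\tau(C_\k)$, and invoke the ellipticity of $\Delta_{G/K}$ already established in Remark~\ref{remEquivSec}(b) to conclude that the characteristic variety is empty. One small notational slip: writing ``$\Delta_{G/K}\in\cU(\p)$'' is not quite meaningful since $\p$ is not a Lie subalgebra; what you mean (and use) is that $\Delta_{G/K}$ is a sum of squares of elements of $\p$ inside $\cU(\g)$, which is enough for the symbol computation.
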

\begin{proof}
By identifying sections with $K$-right equivariant functions $f\in \C^\infty(G,V)^K$, the assertion follows from \begin{align*}
R(C_\g)f=&\left(\sum_{j=1}^rR(H_j)^2+\sum_{\alpha\in \Sigma^+}\sum_{k=1}^{m_\alpha}R(X_k^{(\alpha)})^2-\sum_{i=1}^m R(Y_i)^2\right)f\\
=&\left(\sum_{j=1}^rR(H_j)^2+\sum_{\alpha\in \Sigma^+}\sum_{k=1}^{m_\alpha}R(X_k^{(\alpha)})^2\right)f-d\tau\left(\sum_{i=1}^m Y_i^2\right)f
\end{align*}
and Equation (\ref{LaplBeltr}) which implies that $R(C_\g)f=R(\Delta_{G/K})f-d\tau(C_\k)f$.
\end{proof}
The following proposition, for linear simple Lie groups, can be found in \cite{KSch09} as part of the proof of \cite[Thm. 3.2]{KSch09}. Here one has to address a few of the technical details, one picks up by dropping that $G$ is linear.
\begin{prop}\label{corHolExt}
Let $(\tau,V)$ be a finite-dimensional representation of $K$. Then, for every smooth $Z(\g)$-finite section $s:G/K\to G\times_K V$, there exists a $G$-left and $\wt{K}_\C$-right invariant neighbourhood $U_0$ of $G\subseteq \wt{\Xi}_{G_\C}$ in the principal crown bundle such that $s$ extends to a holomorphic section $$s:\Xi\supseteq U_0/{\wt{K}_\C}\to U_0\times_{\wt{K}_\C} V.$$
\end{prop}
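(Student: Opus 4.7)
The plan is to translate the $Z(\g)$-finiteness of $s$ into an elliptic PDE on the analytic vector bundle $G\times_K V$, apply the analytic Elliptic Regularity Theorem (Remark \ref{remAnalyticReg}) to obtain real-analyticity together with uniform local radii of holomorphic extension, and then lift the resulting extension to a $G\times \wt{K}_\C$-invariant neighborhood in the principal crown bundle.

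First, I identify $s$ with a smooth $K$-equivariant map $F:G\to V$ satisfying $F(gk)=\tau(k)^{-1}F(g)$. The $Z(\g)$-finiteness of $s$ provides a nonzero polynomial $p\in \C[x]$ with $p(R(C_\g))F=0$. By Lemma \ref{lemCharSet}, $C_\g$ acts on sections of $G\times_K V$ as $\Delta_{G/K}-d\tau(C_\k)$, whose principal symbol equals the (invertible) principal symbol of the elliptic operator $\Delta_{G/K}$, tensored with the identity on $V$. Since the principal symbol of a polynomial of degree $d$ in a differential operator is the $d$-th power of that operator's principal symbol, $p(C_\g)$ is itself elliptic, i.e. $\mathrm{char}(p(C_\g))=\emptyset$.

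Applying the analytic Elliptic Regularity Theorem, $s$ is real-analytic. The final statement of Remark \ref{remAnalyticReg} further supplies, at every point of $G/K$, a local trivialization and neighborhood in which the Taylor series of every solution of $p(C_\g)s=0$ converges, with radius determined by $p(C_\g)$ alone. Since left translation $L_{g_0}$ commutes with the right-invariant operator $R(p(C_\g))$ and $G$ acts transitively on $G/K$, these radii are uniform under left translation. This produces a $G$-left invariant open neighborhood of $G/K$ in $G_\C/K_\C$ on which $s$ extends holomorphically; intersecting with the crown $\Xi\subseteq G_\C/K_\C$ and pulling back via the covering $\wt{\Xi}_{G_\C}\to \Xi_{G_\C}$ yields an open $G$-invariant neighborhood $U$ of $G\subseteq\wt{\Xi}_{G_\C}$ to which the corresponding $F$ extends holomorphically.

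Finally, I promote $K$-equivariance to $\wt{K}_\C$-equivariance. Because $\wt{K}_\C$ is simply connected, $d\tau$ integrates to a holomorphic representation of $\wt{K}_\C$ on $V$. By uniqueness of analytic continuation, the identity $F(uk)=\tau(k)^{-1}F(u)$, which holds for $u\in U$ and $k\in K$, extends holomorphically in $k\in \wt{K}_\C$, allowing one to enlarge $U$ to its $\wt{K}_\C$-saturation $U_0 := U\cdot \wt{K}_\C$. By construction $U_0$ is $G\times\wt{K}_\C$-invariant, and the extended $F$ defines the required holomorphic section $s:U_0/\wt{K}_\C \to U_0\times_{\wt{K}_\C}V$. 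The main obstacle is the coherent passage from the local $G_\C$-level extensions to the universal cover $\wt{\Xi}_{G_\C}$ while simultaneously upgrading the equivariance from $K$ to $\wt{K}_\C$; this is where the $G\times\wt{K}_\C$-equivariant description $\wt{\Xi}_{G_\C}=G\,\wt{\exp}(i\Omega_\p)\wt{K}_\C$ from Remark \ref{remPrincCrownDomain} and the simple connectedness of $\wt{K}_\C$ (needed to extend $\tau$) are essential.
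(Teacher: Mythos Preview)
Your proof is essentially the same as the paper's: ellipticity of $p(C_\g)$ via Lemma \ref{lemCharSet}, analytic regularity with uniform local radii from Remark \ref{remAnalyticReg}, left-translation invariance to obtain a $G$-invariant neighborhood, and then $\wt{K}_\C$-saturation using the integrated representation $\tau$. The paper is slightly more explicit, working with the concrete chart $\varphi:U_\p\to\Xi$, $x+iy\mapsto[(\exp(x),iy)]$, and defining the extension by the formula $g\,\wt{\exp}(x)k\mapsto\tau(k)^{-1}(\lambda_g(f)\circ\varphi)(x)$, then checking directly that this is constant on $K$-orbits; note also that $R(p(C_\g))$ is a \emph{left}-invariant differential operator (not right-invariant as you wrote), which is exactly why it commutes with the left shifts $\lambda_{g_0}$.
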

\begin{proof}
Since $s$ is $Z(\g)$-finite, it follows that $\C[C_\g]s$ is finite-dimensional and thus there exists a polynomial $0\neq p\in \C[x]$ such that $p(C_\g)s=0$. Let $k$ be the degree of $p$. Then the principal symbol of $p(C_\g)$ is given by the principal symbol of $C_\g^k$. In particular, Lemma \ref{lemCharSet} implies $\text{char}(p(C_\g))=\text{char}(\Delta_{G/K})=\emptyset$ and thus the analytic Elliptic Regularity Theorem implies that the section $s:G/K\to G\times_K V$ is analytic (cf. Remark \ref{remAnalyticReg}). In particular, it follows from Remark \ref{remAnalyticReg} that there exists an open polydisc $U_\p\subseteq \p+i\Omega_\p$ around $0$ which only depends on $p(C_\g)$ and not the section $s$, for which $$\varphi:U_\p\to \Xi,\quad x+iy\mapsto [(\exp(x),iy)],\qfor x,y\in \p\qsuchthat x+iy\in U_\p$$
is a biholomorphism onto an open subset and that the Taylor series of $f\circ \varphi:U_\p \to V$ converges on $U_\p$, where $f:G\to V$ is the $K$-equivariant function defining the section $s$. Since $U_\p$ is a polydisc, this implies that $f\circ \varphi$ extends to a holomorphic function on $U_\p$ which coincides with its Taylor series. Consider the left shifts $$\lambda_{g_0}(s)(gK):=s(g_0^{-1}gK),\qfor g,g_0\in G.$$ 
Since $C_\g$ acting on $\Gamma^\infty(G/K,G\times_K V)$ is a left-invariant differential operator, one has $p(C_\g)\lambda_g(s)=0$, for all $g\in G$. Since $U_\p$ was independent of the section $s$, the same arguments imply that the Taylor series of $\lambda_g(f)\circ \varphi$ converges on $U_\p$, for all $g\in G$, i.e. $\lambda_g(f)\circ \varphi$ extends to a holomorphic function on $U_\p$.\\

We now argue that by possibly shrinking $U_\p$ and defining $U_0:=G\,\exp(U_\p)\wt{K}_\C$, we obtain a holomorphic $\wt{K}_\C$-equivariant extension of $f$ to $U_0$ by mapping
$$g\,\exp(x)k\mapsto \tau(\wt{k})^{-1}(\lambda_g(f)\circ \varphi)(x), \qfor (g,x,k)\in G\times U\times \wt{K}_\C.
$$
We shrink $U_\p$ such that $U_\p$ is $K$-invariant. In particular, $U_0\subseteq \wt{\Xi}_{G_\C}$ is open since $G\times i\Omega_\p\times \wt{K}_\C\to \wt{\Xi}_{G_\C}$, $(g,x,k)\mapsto g\,\exp(x)k$ is a quotient map and thus open. Hence,
$$G\times U_\p\times \wt{K}_\C\to V,\quad (g,x,k)\mapsto \tau(k)^{-1}(\lambda_g(f)\circ \varphi)(x)
$$
is constant on the $K$-orbits and thus factors through $f:U_0\to V$ which is a $\wt{K}_\C$-right equivariant extension of $f:G\to V$. In particular, one obtains an extension of the section $s$ to $U_0/\wt{K}_\C\subseteq \Xi$ which is holomorphic by construction.
\end{proof} 

\begin{rem}\label{remCharVar}
\rm{For a Weyl chamber $\a^+\subseteq \a$, one considers $\Omega_\a^+:=\Omega_\a\cap \a^+$ and defines
$$\wt{\Xi}_{G_\C}^+:=G\exp(i\Omega_\a^+)\wt{K}_\C\subseteq \wt{\Xi}_{G_\C}\qand \Xi^+:=\wt{\Xi}_{G_\C}^+/\wt{K}_\C.
$$ 
Note that $\wt{\Xi}_{G_\C}^+\subseteq \wt{\Xi}_{G_\C}$ is open, dense and $$G\times i\Omega_\a^+\times \wt{K}_\C\to \wt{\Xi}_{G_\C}^+,\quad (g,iH,\wt{k})\mapsto g\,\exp(iH)\wt{k}$$
is a real diffeomorphism (cf. \cite[Lemma 3.1]{KSch09}). Recall that the extension $$G\times i\Omega_\a\times \wt{K}_\C\to \wt{\Xi}_{G_\C}$$
of this map above is not everywhere regular and only defines a branched covering of $\wt{\Xi}_{G_\C}$. This is not sufficient for our purposes as we want to apply methods from the theory of Partial Differential Equations. The differential operator $C_\g$ on $\Gamma^\infty(G/K,G\times_K V)$ extends to $\Gamma^\infty(\Xi^+,\E_V)$ with holomorphic coefficient functions. To see this, we first note that the complex linear extension
$$\p\to \mathcal{V}(\Xi),\quad (\wt{X}\cdot f)(xK_\C):=\difftev f(\exp(-tX)xK_\C),$$
for $X\in \p, x\in \Xi_{G_\C}, f\in C^\infty(\Xi)$ defines an embedding of $\p_\C$ into the vector fields on $\Xi$. In particular, if $X_1,\dots,X_p$ is a basis of $\p$, then $$\wt{X}_1\vert_{xK_\C},\dots,\wt{X}_p\vert_{xK_\C}\subseteq T_{xK_\C}\Xi\quad\text{is a basis, for}\quad xK_\C\in \Xi^+.$$
(cf. \cite[Section 4]{KS05}). Another identity proved there, is that there are elements $Y_k^\gamma\in (\g_\gamma+\g_{-\gamma})\cap \k$ such that
\begin{equation}\label{Xkgamma}
X_k^\gamma=\frac{1}{\cosh(\gamma(a))} \Ad(a)^{-1}X_k^\gamma-\tanh(\gamma(a))Y_k^\gamma,
\end{equation}
for $\gamma\in \Sigma^+, 1\leq k\leq m_\gamma$ and $a\in A\exp(i\Omega_\a^+)$. For  $F\in \Gamma^\infty(\Xi^+,\E_V)$, one has
$$\left.\frac{d^2}{dt^2}\right\vert_{t=0}F(a\exp(tH_j))=(\wt{H}_j^2\cdot F)(a),\quad  \left.\frac{d^2}{dt^2}\right\vert_{t=0}F(a\exp(tY_i))=d\tau(Y_i)^2F(a).
$$
It follows from $T_e\exp=\id_\g$, that $\difftev \exp(t(X+Y))=\difftev \exp(tX)\exp(tY)$, for $X,Y\in \g$. Using this, for the first equality, one obtains
\begin{align*}
&\left.\frac{d}{dt}\right\vert_{t=0}F(a\exp(tX_k^\gamma))\\
\overset{(\ref{Xkgamma})}{=}&\left.\frac{d}{dt}\right\vert_{t=0}F\left(a\exp\left(t\frac{1}{\cosh(\gamma(a))} \Ad(a)^{-1}X_k^\gamma\right)\exp\left(-t\tanh(\gamma(a))Y_k^\gamma\right)\right)\\
=&\left.\frac{d}{dt}\right\vert_{t=0}\tau\left(\exp\left(t\tanh(\gamma(a))Y_k^\gamma\right)\right)F\left(\exp\left(t\frac{1}{\cosh(\gamma(a))} X_k^\gamma\right)a\right)\\
=&\frac{1}{\cosh(\gamma(a))}\left(\wt{X}_k^\gamma\cdot F\right)(a)+\tanh(\gamma(a))d\tau(Y_k^\gamma)F(a).
\end{align*}
In particular, the principal symbol of the holomorphic extension of $C_\g$ to a differential operator on $\Gamma^\infty(\Xi^+,\E_V)$ at $a\in A\exp(i\Omega_\a^+)$ is given by
$$\sum_{j=1}^r \wt{H}_j^2+\sum_{\gamma\in \Sigma^+}\sum_{k=1}^{m_\gamma} \frac{1}{\cosh(\gamma(a))^2}\left(\wt{X}_k^\gamma\right)^2.
$$ 
This immediately implies the following corollary, which is our main conceptual contribution to the generalization of the results of \cite{KSch09}.
}
\end{rem}
\begin{cor}
The characteristic variety of $C_\g$ acting on $\Gamma^\infty(\Xi^+,\E_V)$ does not depend on $(\tau,V)$ and coincides with the characteristic variety of the holomorphically extended Laplace-Beltrami operator $\Delta_{G/K}$ of $G/K$.
\end{cor}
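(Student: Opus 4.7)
The plan is to reduce the comparison of characteristic varieties to a pointwise comparison of principal symbols at points of the form $aK_\C$ with $a\in A\exp(i\Omega_\a^+)$, and then invoke the explicit formula for the order-two part from the preceding Remark to show that the principal symbol of $C_\g$ on $\Gamma^\infty(\Xi^+,\E_V)$ is simply the principal symbol of the holomorphically extended $\Delta_{G/K}$ multiplied by $\id_V$. Since a scalar multiple of $\id_V$ is invertible precisely when the scalar is nonzero, this will force the two characteristic varieties to coincide and to be independent of $(\tau,V)$.

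First, I would use that both $C_\g$ (central in $\cU(\g)$) and $\Delta_{G/K}$ (built from $G$-invariant data on $G/K$) induce left $G$-invariant differential operators on their respective bundles, so their holomorphic extensions to $\Xi^+$ are $G$-invariant and hence have $G$-invariant principal symbols. Since $\Xi^+=G\cdot\bigl(A\exp(i\Omega_\a^+)K_\C/K_\C\bigr)$, every cotangent direction is $G$-conjugate to one based at a point $aK_\C$ with $a\in A\exp(i\Omega_\a^+)$, so it suffices to compute the symbols there.

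Second, at such a point, the preceding Remark identifies the degree-two homogeneous part of the holomorphic extension of $C_\g$ as
$$D_a:=\sum_{j=1}^r\wt{H}_j^2+\sum_{\gamma\in\Sigma^+}\sum_{k=1}^{m_\gamma}\frac{1}{\cosh(\gamma(a))^2}\bigl(\wt{X}_k^\gamma\bigr)^2.$$
Crucially, all $d\tau$-terms appearing via $(\ref{Xkgamma})$ enter only as multiplication operators or as products of a vector field with a multiplication, hence at order $\leq 1$ after squaring; consequently the principal symbol of $C_\g$ at $(aK_\C,\xi)\in T^\ast\Xi^+$ equals $\sigma(D_a)(\xi)\cdot\id_V\in\End(V)$, which is manifestly independent of $(\tau,V)$.

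Finally, specializing to the trivial one-dimensional representation $(\mathbf{1},\C)$ collapses $\E_\C$ to a trivial line bundle and, by Lemma \ref{lemCharSet}, collapses $C_\g$ on sections to $\Delta_{G/K}-d\mathbf{1}(C_\k)=\Delta_{G/K}$; applying the Remark's computation in this scalar case shows that the principal symbol of the holomorphic extension of $\Delta_{G/K}$ at $(aK_\C,\xi)$ is exactly $\sigma(D_a)(\xi)$. Since $\sigma(D_a)(\xi)\cdot\id_V$ is invertible iff $\sigma(D_a)(\xi)\neq 0$, the characteristic variety of $C_\g$ on $\Gamma^\infty(\Xi^+,\E_V)$ coincides with that of the holomorphically extended $\Delta_{G/K}$, independently of $(\tau,V)$. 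The only delicate point is to verify that the $d\tau$-contributions from $(\ref{Xkgamma})$ indeed drop to strictly lower order after squaring $R(X_k^\gamma)$, which follows by direct expansion using that $d\tau(Y_k^\gamma)$ is an order-zero (multiplication) operator on $V$.
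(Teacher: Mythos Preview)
Your proposal is correct and follows essentially the same approach as the paper: the paper simply states that the corollary follows immediately from the preceding Remark, and you have spelled out exactly why---the degree-two homogeneous part computed there is scalar (independent of $d\tau$), so the principal symbol of $C_\g$ on $\E_V$ is $\sigma(D_a)(\xi)\cdot\id_V$, and specializing to the trivial representation identifies $\sigma(D_a)$ with the principal symbol of the holomorphically extended $\Delta_{G/K}$. Your use of $G$-invariance to reduce to points $aK_\C$ and your observation that the $d\tau$-terms contribute only at order $\leq 1$ after squaring are precisely the implicit steps behind the paper's ``immediately implies.''
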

\begin{defin}
\rm{Let $Z\subseteq \C^n$ be an open subset and $S\subseteq Z$ be a $C^1$-hypersurfaces and let $$P(z,D)=\sum_{\alpha\leq m}a_\alpha(z)D^\alpha,\qfor a_\alpha\in \cO(Z)\qand D^\alpha=\del_{z_1}^{\alpha_1}\dots\del_{z_n}^{\alpha_n}$$
be a linear differential operator on $Z$ with analytic coefficients. We call $S$ \textit{non-characteristic} for $P$ if for all $z\in S$ and all analytic charts $(\psi,U)$ around $z_0$ the principal symbol
$$\sigma(P)(z)(\xi)=\sum_{|\alpha|=m}a_\alpha(z)\xi^\alpha,\qfor z\in Z\qand \xi\in T_zZ^\ast\cong \C^n
$$
satisfies $\sigma(P)(z_0)(\del\psi/\del z(z_0))\neq 0$, for $z_j=x_j+iy_j$ and $$\del\psi/\del z_j(z_0)=\tfrac12(\del\psi/\del x_j(z_0)-i\del\psi/\del y_j(z_0)).$$
}
\end{defin}
We recall the remaining arguments of the proof of \cite[Thm. 3.5]{KSch09}. As we covered all the necessary and minor generalizations already, the remaining proof differs not much from the original. We include it for the sake of self-containedness.
\begin{thm}\label{ThmHolExt}
Let $(\tau,V)$ be a finite-dimensional representation of $K$. Then every $Z(\g)$-finite smooth section of the associated vector bundle $G\times_K V\epi G/K$ extends to a holomorphic section of the holomorphic complex vector bundle $\wt{\Xi}_{G_\C}\times_{\wt{K}_\C} V\epi \Xi$ over the crown.
\end{thm}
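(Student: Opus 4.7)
The plan is to combine the local extension given by Proposition \ref{corHolExt} with a propagation-of-holomorphy argument along the $A$-direction, leveraging the fact that the relevant real hypersurfaces are non-characteristic for $C_\g$ (hence for any polynomial in it) on $\Xi^+$.

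Concretely, let $s$ be a smooth $C_\g$-finite section of $G\times_K V\epi G/K$, corresponding to a $K$-equivariant smooth map $f\colon G\to V$ with $p(C_\g)f=0$ for some nonzero $p\in\C[x]$. Since $\C[C_\g].s$ is finite-dimensional, $s$ is in particular $Z(\g)$-finite, so Proposition \ref{corHolExt} directly applies and yields a $G\times \wt{K}_\C$-invariant open neighborhood $U_0$ of $G\subseteq \wt{\Xi}_{G_\C}$ together with a $\wt{K}_\C$-equivariant holomorphic extension $\tilde f\colon U_0\to V$. Equivalently, $s$ extends to a holomorphic section $\tilde s$ of $\E_V$ over the open set $U_0/\wt{K}_\C\subseteq \Xi$ containing $G/K$.

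For the global step, I would argue by connectedness, one Weyl chamber at a time. Fix $H\in \Omega_\a^+=\Omega_\a\cap \a^+$, set $r_H:=\sup\{r>0:rH\in \Omega_\a^+\}$, and consider the family of $G$-invariant real hypersurfaces
$$S_t:=G\cdot \wt{\exp}(itH)\wt{K}_\C/\wt{K}_\C\subseteq \Xi^+,\qquad t\in[0,r_H).$$
Let $T_H$ denote the set of $t\in[0,r_H)$ for which $\tilde s$ extends holomorphically to an open neighborhood of $S_t$ in $\Xi$. Then $0\in T_H$ by the local step, and $T_H$ is open by the local nature of holomorphic extension, so showing $T_H=[0,r_H)$ reduces to showing that $T_H$ is closed. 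Ranging $H$ over $\Omega_\a^+$ and rerunning the argument in each Weyl chamber, the union of the resulting domains covers $\Xi$; overlaps are compatible by uniqueness of analytic continuation, which also handles the gluing across the walls.

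The hard part -- the closedness of $T_H$ -- is a propagation-of-holomorphy statement for the analytic linear PDE $p(C_\g)\tilde s=0$ across non-characteristic hypersurfaces, which I would handle via a Zerner/Bony-type theorem in the form used in \cite{KSch09}. Two ingredients are required: first, that $\tilde s$ satisfies $p(C_\g)\tilde s=0$ on its current domain, which follows by uniqueness of analytic continuation from the identity $p(C_\g)f=0$ on $G/K$ together with the analyticity of the coefficients of the holomorphically extended operator; second, that each $S_{t_0}$ with $t_0\in\overline{T_H}\cap[0,r_H)$ is non-characteristic for $p(C_\g)$. For the latter, the corollary immediately preceding this theorem identifies the characteristic variety of $C_\g$ acting on $\Gamma^\infty(\Xi^+,\E_V)$ with that of the holomorphically extended Laplace-Beltrami operator, so it is independent of $(\tau,V)$; since the principal symbol of $p(C_\g)$ is a scalar multiple of $\sigma(C_\g)^{\deg p}$, the same conclusion applies to $p(C_\g)$. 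The explicit formula from Remark \ref{remCharVar} then shows that this principal symbol at any point of $\Xi^+$ is a non-degenerate complex quadratic form -- the factors $1/\cosh(\gamma(a))^2=1/\cos(\gamma(H))^2$ are finite and nonzero throughout $\Omega_\a$ -- and its evaluation on the conormal direction of $S_{t_0}$, which lies along the $i\a^*$-component in the trivialization of Remark \ref{remCharVar}, is nonzero. Combined with the propagation theorem this gives $t_0\in T_H$ and completes the argument.
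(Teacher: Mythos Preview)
Your local step via Proposition~\ref{corHolExt} and the reduction of the propagation problem for $p(C_\g)$ to that for $C_\g$ (same characteristic variety) are correct, but there are two genuine gaps in the global argument.

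First, your sets $S_t=G\,\wt{\exp}(itH)\wt{K}_\C/\wt{K}_\C$ are \emph{not} hypersurfaces when $\dim_\R\a>1$. In the real trivialization $\Xi^+\cong (G/M)\times i\Omega_\a^+$ from Remark~\ref{remCharVar}, the set $S_t$ corresponds to $(G/M)\times\{itH\}$ and hence has real codimension $\dim_\R\a$, so the Zerner/Bony-type propagation theorem does not apply to $S_t$ as stated. The paper circumvents this by working with genuine hypersurfaces: for $h_0\in\Omega_\a^+$ and small $r>0$ it uses the tubes $T_r(h_0)=G\,\wt{\exp}(iB_r(h_0))\wt{K}_\C$, whose boundaries $\partial T_r(h_0)/\wt{K}_\C$ are real hypersurfaces in $\Xi^+$; these are shown to be non-characteristic for $\Delta_{G/K}$ (hence for $p(C_\g)$), and then \cite[Thm.~9.4.7]{H\"or89} gives the propagation step. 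Iterating over $h_0\in\Omega_\a^+$ yields the extension to all of $\Xi^+$.

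Second, even once the extension to $\Xi^+$ is in hand, your sentence ``rerunning the argument in each Weyl chamber \dots\ handles the gluing across the walls'' does not suffice. Since $N_K(\a)\subseteq K\subseteq G$, all Weyl chambers produce the \emph{same} open set $\Xi^+$; the points of $\Xi\setminus\Xi^+$ correspond to $H$ on a chamber wall, where the branched covering $G\times i\Omega_\a\times\wt{K}_\C\to\wt{\Xi}_{G_\C}$ fails to be regular and the non-characteristic argument is unavailable. The paper fills in these walls by a different mechanism: for each $g\in G$ it restricts to the $A$-slice to obtain a holomorphic function $f_g$ on the disconnected tube $\a+i\cW.\Omega_\a^+$, glues with the initial extension near $0$ to get a connected tube, applies Bochner's Tube Theorem to extend to $\a+i\Omega_\a$, and then uses the quotient-map description of $\wt{\Xi}_{G_\C}$ together with Riemann's Removable Singularity Theorem to conclude holomorphy on all of $\Xi$. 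This analytic-continuation step in the abelian direction is an essential idea that your outline is missing.
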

\begin{proof}
Let $s:G/K\to G\times_K V$ be a $Z(\g)$-finite smooth section and let $p$ be a polynomial such that $p(C_\g)s=0$. It follows from Proposition \ref{corHolExt} that there exists a $G\times \wt{K}_\C$ invariant open neighbourhood $U_0$ of $G$ in $\wt{\Xi}_{G_\C}$ such that $s$ extends to a holomorphic section $U_0/\wt{K}_\C \to U_0\times_{\wt{K}_\C} V$.\\

We now argue as in \cite{KSch09} that $s$ extends holomorphically to $\Xi^+$. For $h_0\in \Omega_\a^+$ such that $\exp(ih_0)\in U_0$, there exists $r>0$ such that
$$T_r(h_0):=G\exp(iB_r(h_0))\wt{K}_\C\subseteq \wt{\Xi}_{G_\C}^+\cap U_0.
$$
In \cite{KSch09}, the authors argued that $\del T_r(h_0)/\wt{K_\C}\subseteq \Xi$ is non-characteristic for the holomorphic extension of the Laplace-Beltrami operator $\Delta_{G/K}$ to $\Xi^+=\wt{\Xi}_{G_\C}^+/\wt{K}_\C$ and thus also for $C_\g$ acting on $\Gamma^\infty(\Xi^+,\E_V)$ by Remark \ref{remCharVar}. Since the principal symbol of a polynomial of a differential operator is just a power of the principal symbol of the differential operator, it follows that $\del T_r(h_0)/\wt{K_\C}\subseteq \Xi$ is non-characteristic for $p(C_\g)$ as well. Since $s$ is annihilated by $p(C_\g)$, it follows from an application of \cite[Thm. 9.4.7]{H\"or89} to the components of $s$ in a local trivialization that $s$ extends holomorphically to an open neighbourhood of $T_r(h_0)/\wt{K}_\C$. In particular, $s$ extends to a holomorphic section 
$$s:\Xi^+\to \wt{\Xi}_{G_\C}^+\times_{\wt{K}_\C}V,\quad xK_\C \mapsto [x,F(x)],\qfor F:\wt{\Xi}_{G_\C}^+\to V.$$ 
holomorphic and $\wt{K}_\C$-equivariant. It remains to prove that $s$ extends to $\Xi$. Consider the function 
$$f_g:\a+i\cW.\Omega_\a^+\to V,\quad \Ad(k)(H+iH^\prime)\mapsto \tau(k)F(gk\exp(H)\exp(iH^\prime)),
$$
for $H+iH^\prime\in \a+i\Omega_\a^+,k\in N_K(\a)$, is holomorphic since $\a+i\cW.\Omega_\a^+$ is a disjoint union of $w.(\a+i\Omega_\a^+)$, for $w\in \cW$, and $F$ is holomorphic. As $F$ is holomorphic on $U_0$, there exists a zero neighbourhood $\omega_0\subseteq \a$ such that $A\exp(i\omega_0)\subseteq U_0$. In particular,
$$\a+i\omega_0\to V,\quad H+iH^\prime \mapsto F(g\exp(H)\exp (iH^\prime))
$$
is holomorphic and coincides with $f_g$ on the intersection $(\a+i\omega_0)\cap(\a+i\cW.\Omega_\a^+)$. Therefore $f_g$ extends holomorphically to their union which is a connected tube. By Bochner's Tube Theorem (cf. \cite[Thm. 2.5.10]{H\"or66}), it follows that $f_g$ extends to the convex hull $$\a+i\Omega_\a=\text{conv}((\a+i\cW.\Omega_\a^+)\cup (\a+i\omega_0))$$
of this connected tube. The map $\varphi:G\times i\Omega_\a\times \wt{K}_\C \to \wt{\Xi}_{G_\C},\; (g,iH,k)\mapsto g\,\exp (iH)k$ is a branched covering and the topology on $\wt{\Xi}_{G_\C}$ is the quotient topology with respect to this map which can be proven with the same methods as \cite[Lemma 3.1]{KSch09}. In particular,
$$F(g\exp(iH)k):=\tau(k)^{-1}f_g(iH),\qfor g\in G, H\in \Omega_\a,k\in \wt{K}_\C
$$
defines a continuous extension of $F$ to $\wt{\Xi}_{G_\C}$ since
$$G\times i\Omega_\a\times \wt{K}_\C \to \wt{\Xi}_{G_\C},\quad (g,iH,k)\mapsto F(g\exp(iH)k):=\tau(k)^{-1}f_g(iH)
$$
is a continuous map which is constant on the fibres of $\varphi$, i.e. factors through a continuous map $F:\wt{\Xi}_{G_\C}\to V$. Since $F$ is holomorphic on $\wt{\Xi}_{G_\C}^+$, which is open and dense in $\wt{\Xi}_{G_\C}$, the function $F$ is holomorphic on $\wt{\Xi}_{G_\C}$ by Riemann's Removable Singularity Theorem (cf. \cite[Thm. I.C.3]{GR65}) which proves the assertion.
\end{proof}
\newpage
 

\begin{thebibliography}{aaaaaaaaa}
\bibitem[AG90]{AG90} D. N. Akhiezer, S. G. Gindikin, ``On Stein extensions of real symmetric spaces,'' Math. Ann. 286 (1990), no. 1-3, 1-12. 32M10 (32E10)
\bibitem[Ba03]{Ba03} L. Barchini, ``Stein extensions of real symmetric spaces and the geometry of the flag manifold,'' (English summary) Math. Ann. 326 (2003), no. 2, 331-346.
\bibitem[BFMQ22]{BFMQ22} F. Bernard, G. Fichou, J.-P. Monnier, R. Quarez, ``Algebraic characterizations of homeomorphisms between algebraic varieties,'' 2022, hal-03613513v1
\bibitem[BK14]{BK14} J. N. Bernstein, B. Kr\"otz, ``Smooth Fr\'{e}chet globalizations of Harish--Chandra modules,'' (English summary) Israel J. Math 199 (2014), no. 1, 45-111.
\bibitem[Be88]{Be88} J. N. Bernstein, ``On the support of Plancherel measure,'' J. Geom. Phys. \textbf{5} (1988), no. 4, 663-710 
\bibitem[BER99]{BER99} M. S. Baouendi, P. Ebenfeld, L. P. Rothschild, ``Real Submanifolds in Complex Space and their Mappings,'' Princeton University Press, Princeton, New Jersey, 1999
\bibitem[CM82]{CM82} W. Casselman, D. Mili\v{c}i\'{c}, ``Asymptotic behavior of matrix coefficients of admissible representations,'' Duke Mat. J. 49 (1982), no. 4, 869-930
\bibitem[Dix69]{Dix69} J. Dixmier, ``C*-algebras'', Translated from the French by Francis Jellett. North-Holland Mathematical Library, Vol. 15. North-Holland Publishing Co., Amsterdam-New York-Oxford, 1977. xiii+492 pp. 
\bibitem[FNO23]{FNO23} J. Frahm, K.-H. Neeb, G. \'{O}lafson, ``Nets of standard subspaces on non-compactly causal symmetric spaces,'' arXiv:2303.10065
\bibitem[Ga60]{Ga60} L. Garding, ``Vectors analytiques dans le repr\'{e}sentations des groups de Lie,'' Bulltin de la Soci\'{e}t\'{e} Math\'{e}matique de France \textbf{88} (1960), 73-90.
\bibitem[GK02a]{GK02a} S. Gindikin, B. Kroetz, ``Complex crowns of Riemannian symmetric spaces and non-compactly causal symmetric space'', (English Summary) Trans. Amer. Math. Soc. \textbf{354} (2002), no. 8, 3299-3327.
\bibitem[GK02b]{GK02b} S. Gindikin, B. Kroetz, ``Invariant Stein Domain in Stein Symmetric Spaces and a Nonlinear Complex Convexity Theorem'', Int. Math. Res. Not. (2002), no. 18, 959-971.
\bibitem[GKL12]{GKL12} H. Gimperlein, B. Kr\"otz, Christoph Lienau, ``Analytic factorization of Lie group representations,'' (English summary) J. Funct. Anal. \textbf{262} (2012), no. 2, 667-681
\bibitem[GKO04]{GKO04} S. Gindikin, B. Kr\"otz, G. \'{O}lafsson, ``Holomorphic $H$-spherical distribution vectors in principal series representations,'' (English summary) Invent. Math. \textbf{158} (2004), no. 3, 643-682.
\bibitem[GM03]{GM03} S. Gindikin, T. Matsuki, ``Stein extensions of Riemannian symmetric spaces and dualities of orbits on flag manifolds,'' (English summary) Transform. Groups 8 (2003), no. 4, 333-376.
\bibitem[GR65]{GR65} R.C. Gunning, H. Rossi, ``Analytic Functions of Several Complex Variables,'' Prentice-Hall, Inc., Englewood Cliffs, N.J., 1965
\bibitem[He84]{He84} S. Helgason, ``Groups and Geometric Analysis,'' (English summary) Integral geometry, invariant differential operators, and spherical functions. Corrected reprint of the 1984 original, Math. Surveys Monogr., 83, American Mathematical Society, Providence, RI, 2000. xxii+667 pp. 
\bibitem[HN12]{HN12} Karl-Hermann Neeb, Joachim Hilgert, ``Structure and Geometry of Lie Groups,'' \nth{1} ed., Springer Monographs in Mathematics. Springer, New York, 2012. x+744 pp. 
\bibitem[Hu75]{Hu75} J. E. Humphreys, ``Linear Algebraic Groups,'' Springer-Verlag New York Inc., 1975
\bibitem[Hu02]{Hu02} A. Huckleberry, ``On certain domains in cycle spaces of flag manifolds,'' Math. Ann. 323:4 (2002), 797-810
\bibitem[H\"or66]{H\"or66} L. H\"ormander, ``An introduction to complex analysis in several variables,'' Third edition
North-Holland Math. Library, 7, North-Holland Publishing Co., Amsterdam, 1990. xii+254 pp.
\bibitem[H\"or89]{H\"or89} L. H\"ormander, ``The Analysis of Linear Partial Differential Operators I,'' Distribution theory and Fourier analysis. Reprint of the second (1990) edition,
Classics Math. Springer-Verlag, Berlin, 2003. x+440 pp.
\bibitem[Kn86]{Kn86} A. Knapp, ``Representation Theory of Semisimple Groups: An Overview Based on Examples'',
\bibitem[KSch09]{KSch09} B. Kr\"otz, H. Schlichtkrull, ``Holomorphic Extension of Eigenfunctions,'' (English summary) Math. Ann. \textbf{345} (2009), no. 4, 835-841
\bibitem[KS04]{KS04} B. Kr\"otz, R. J. Stanton, ``Holomorphic extensions of representations. I. Automorphic functions,'' Ann. of Math. (2) \textbf{159} (2004), no. 2, 641-724
\bibitem[KS05]{KS05} B. Kr\"otz, R. J. Stanton, Holomorphic extensions of representations. II. Geometry and harmonic analysis,'' Geom. Funct. Anal. 15 (2005), no. 1, 190-245
\bibitem[KO08]{KO08} B. Kr\"otz, E. Opdam, ``Analysis on the crown domain,'' Geom. Funct. Anal. 18 (2008), no. 4 1326-1421
\bibitem[Ne00]{Ne00} K.-H. Neeb, ``Holomorphy and Convexity in Lie Theory,'' De Gruyter Exp. Math., 28, Walter de Gruyter \& Co., Berlin, 2000. xxii+778 pp.
\bibitem[Ol87]{Ol87} G. \'{O}lafsson, "Fourier and Poisson transformation associated to a semisimple Symmetrie space." Inventiones mathematicae 90 (1987): 605-630.
\bibitem[Sh92]{Sh92} B. V. Shabat, ``Introduction to Complex Analysis, Part II Functions of Several Variables,'' (English summary)
Functions of several variables. Translated from the third (1985) Russian edition by J. S. Joel
Transl. Math. Monogr., 110, American Mathematical Society, Providence, RI, 1992. x+371 pp.
\bibitem[Tre67]{Tre67} F. Treves, ``Topological vector spaces, distributions and kernels,''
\bibitem[Wa88]{Wa88} N. R. Wallach, ``Real Reductive Groups I,'' Pure Appl. Math., 132, Academic Press, Inc., Boston, MA, 1988. xx+412 pp.
\bibitem[Wa92]{Wa92} N. R. Wallach, ``Real Reductive Groups II,'' Pure Appl. Math., 132-II, Academic Press, Inc., Boston, MA, 1992. xiv+454 pp.
\end{thebibliography}
\end{document}